\newtheorem{theorem}{Theorem} [section]
\newtheorem{corollary}[theorem]{Corollary} 
\newtheorem{lemma}[theorem]{Lemma}
\newtheorem{open}{Open Question}
\newcommand{\degrees}{^\circ}
\long\def\void#1{}
\begin{document}
International Journal of  Computer Discovered Mathematics (IJCDM) \\
ISSN 2367-7775 \copyright IJCDM \\
Volume 11, 2026 pp. xx--yy  \\
web: \url{http://www.journal-1.eu/} \\
Received xx Jan. 2026. Published on-line xx Mmm 2026\\ 

\copyright The Author(s) This article is published 
with open access.\footnote{This article is distributed under the terms of the Creative Commons Attribution License which permits any use, distribution, and reproduction in any medium, provided the original author(s) and the source are credited.} \\
\bigskip
\bigskip

\begin{center}
	{\Large \textbf{Some Geometric Properties of}} \\
	{\Large \textbf{the Yff Points of a Triangle}} \\
	\medskip
	\bigskip
        \bigskip

	\textsc{Stanley Rabinowitz$^a$ and Francisco Javier Garc\'{\i}a Capit\'an$^b$} \\

	$^a$ 545 Elm St Unit 1,  Milford, New Hampshire 03055, USA \\
	e-mail: \href{mailto:stan.rabinowitz@comcast.net}{stan.rabinowitz@comcast.net}
	\\web: \url{http://www.StanleyRabinowitz.com} \\
	
	$^b$ I. E. S. \'Alvarez Cubero, 14800 Priego de C\'ordoba, Spain \\
	e-mail: \href{mailto:garciacapitan@gmail.com}{garciacapitan@gmail.com} \\
	web: \url{http://garciacapitan.epizy.com} \\

\bigskip
\bigskip

\end{center}
\bigskip

\textbf{Abstract.}
The Yff points of a triangle were introduced by Peter Yff in 1963.
Since then, very few new facts have been discovered about these points.
We present some geometrical properties of the Yff points of various
shaped triangles which were discovered and proved by computer.

\bigskip
\textbf{Keywords.} bicentric pairs, computer-discovered mathematics,Yff points.

\medskip
\textbf{Mathematics Subject Classification (2020).} 51M04, 51-08.

\newenvironment{code}[2]
{
\medskip
\hspace{#1}%
\begin{minipage}{#2}
\color{blue}
}
{
\color{black}
\smallskip
\end{minipage}%
}

\newcommand{\redtext}[1]
{\textcolor{red}{#1}
}

\bigskip
\bigskip
%
\section{Introduction}
\label{section:introduction}

The following result was found by Peter Yff in 1963 \cite{Yff}.

\begin{theorem}
\label{thm:yff}
Let $D_1$, $D_2$, $E_1$, $E_2$, $F_1$, $F_2$
be points on the sides of triangle $ABC$ (as shown in Figure~\ref{fig:YffPoints})
such that
$AF_1=BD_1=CE_1=AE_2=BF_2=CD_2=u$
where $u$ is the real root of the equation $x^3=(a-x)(b-x)(c-x)$.
Then $AD_1$, $BE_1$, $CF_1$ meet in a point $Y_1$ and
$AD_2$, $BE_2$, $CF_2$ meet in a point $Y_2$.
\end{theorem}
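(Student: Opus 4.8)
The plan is to obtain both concurrences directly from Ceva's theorem; the hypothesis on $u$ is exactly the Ceva product condition written as a polynomial equation. Write $a=BC$, $b=CA$, $c=AB$, and keep the labelling of Figure~\ref{fig:YffPoints}, so that $D_1,D_2$ lie on $BC$, $E_1,E_2$ on $CA$, and $F_1,F_2$ on $AB$. From $BD_1=u$ we get $D_1C=a-u$; from $CE_1=u$ we get $E_1A=b-u$; from $AF_1=u$ we get $F_1B=c-u$. Similarly $CD_2=u$ gives $D_2C=u$ and $BD_2=a-u$, while $AE_2=u$ gives $E_2A=u$ and $CE_2=b-u$, and $BF_2=u$ gives $F_2B=u$ and $AF_2=c-u$.

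The computation is then one line for each triple. By Ceva's theorem, $AD_1,BE_1,CF_1$ are concurrent if and only if
\[
\frac{BD_1}{D_1C}\cdot\frac{CE_1}{E_1A}\cdot\frac{AF_1}{F_1B}
=\frac{u}{a-u}\cdot\frac{u}{b-u}\cdot\frac{u}{c-u}
=\frac{u^{3}}{(a-u)(b-u)(c-u)}=1,
\]
that is, if and only if $u^{3}=(a-u)(b-u)(c-u)$, which is precisely the defining equation of $u$. For the second triple the analogous product is
\[
\frac{BD_2}{D_2C}\cdot\frac{CE_2}{E_2A}\cdot\frac{AF_2}{F_2B}
=\frac{a-u}{u}\cdot\frac{b-u}{u}\cdot\frac{c-u}{u}
=\frac{(a-u)(b-u)(c-u)}{u^{3}},
\]
the reciprocal of the first, so it too equals $1$, and hence $AD_2,BE_2,CF_2$ concur. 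Call the resulting points $Y_1$ and $Y_2$.

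The only step that genuinely needs an argument — and the only place any care is required — is the preliminary fact that makes the statement meaningful: the cubic $x^{3}=(a-x)(b-x)(c-x)$ has a unique real root $u$, and $0<u<\min\{a,b,c\}$, so that the six points are interior to the sides and Ceva's theorem applies in its classical form (with the concurrence occurring at an honest point inside the triangle). I would prove this by examining $f(x)=x^{3}-(a-x)(b-x)(c-x)$. On $\bigl(0,\min\{a,b,c\}\bigr)$ the term $x^{3}$ increases from $0$ while $(a-x)(b-x)(c-x)$ decreases from $abc>0$ down to $0$, so $f$ has exactly one zero there. For $x\le 0$ one has $(a-x)(b-x)(c-x)\ge abc>0\ge x^{3}$; assuming $a\le b\le c$, on $[a,b]$ and on $[c,\infty)$ the product $(a-x)(b-x)(c-x)$ is $\le 0<x^{3}$; and on $[b,c]$ the triangle inequalities $c-a<b$ and $c-b<a\le b$ give $(x-a)(x-b)(c-x)\le (c-a)\,\tfrac{(c-b)^{2}}{4}<\tfrac{b^{3}}{4}<x^{3}$. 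Hence $u$ is well defined and strictly smaller than the shortest side. After that the theorem is immediate from Ceva; I expect no further obstacle, the substance of the result being precisely the choice of $u$, which converts the Ceva product into the stated cubic.
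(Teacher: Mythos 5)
Your proof is correct. The paper does not actually prove Theorem~\ref{thm:yff}; it is quoted as Yff's 1963 result with a citation, so there is no in-paper argument to compare against. Your route --- Ceva's theorem applied to the two triples, with the Ceva products $u^3/\bigl((a-u)(b-u)(c-u)\bigr)$ and its reciprocal both equal to $1$ by the defining cubic --- is the classical argument, and your supplementary analysis of $f(x)=x^3-(a-x)(b-x)(c-x)$ correctly establishes that the real root is unique and lies in $\bigl(0,\min\{a,b,c\}\bigr)$ (the sign analysis on $(-\infty,0]$, $[a,b]$, $[b,c]$, and $[c,\infty)$ checks out, including the triangle-inequality bound on $[b,c]$), which is exactly what is needed for the six points to be interior to the sides and for Ceva to apply in its unsigned form.
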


\begin{figure}[h!t]
\centering
\includegraphics[width=0.8\linewidth]{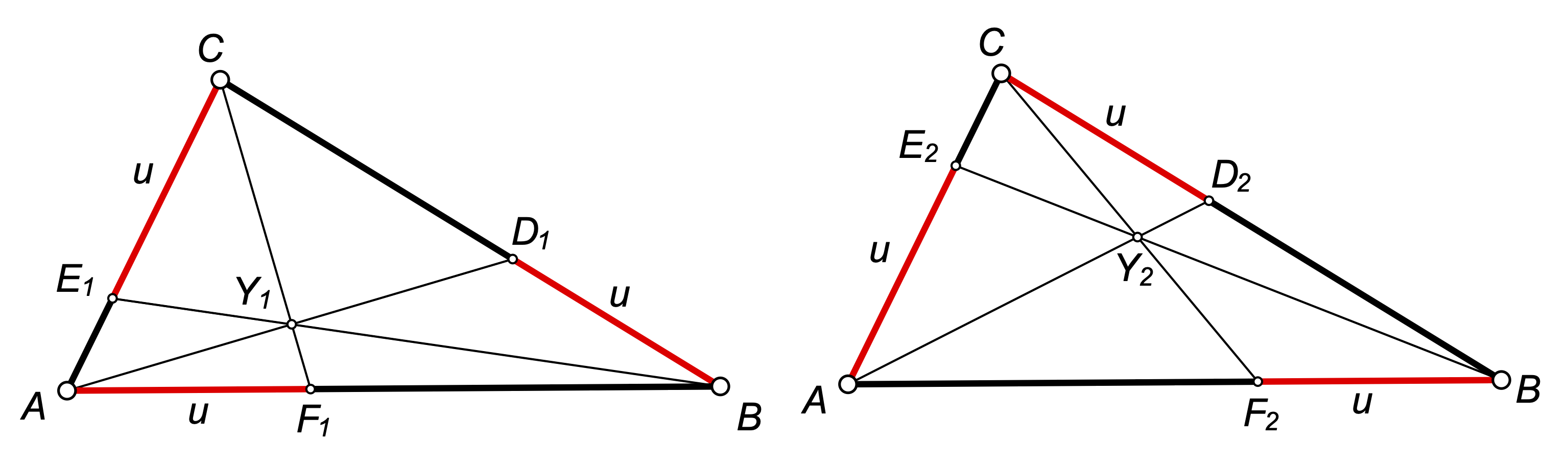}
\caption{Yff Points}
\label{fig:YffPoints}
\end{figure}

The points $Y_1$ and $Y_2$ have become known as the \emph{Yff Points} of a triangle.
If $\triangle ABC$ is named counterclockwise, then $Y_1$ is the \emph{1st Yff point}
and $Y_2$ is the \emph{2nd Yff point}.

Very little is known about these points that was not found in Yff's original paper in 1963.
It is clear from the definition that the Yff points are isotomic conjugates.
A summary of what is known about Yff points can be found in \cite{Yff}, \cite{MathWorld}, \cite[\S 3.3.2]{Yiu},
and \cite{BicentricPairs}.

The value $u$ is connected to $a$, $b$, and $c$ by the equation
\begin{equation}
\label{eq:u}
u^3=(a-u)(b-u)(c-u).
\end{equation}

The value of $u$ can be expressed in terms of radicals as follows.
$$
u=\frac{\sqrt[3]{k_1}}{6 \sqrt[3]{2}}-\frac{k_4}{3\ 2^{2/3} \sqrt[3]{k_1}}+\frac{1}{6} (a+b+c)
$$
where
\begin{align*}
k_1&=k_3+\sqrt{k_2},\\
k_2&=k_3^2 + 4k_4^3,\\
k_3&=2 (a+b+c)^3-18 (ab+bc+ca) (a+b+c)+108 a b c,\\
\intertext{and}
k_4&=6 (ab+ac+bc)-(a+b+c)^2.
\end{align*}

This representation is useful when constructing the Yff points in Dynamic Geometry Enviroments
such as Geometer's Sketchpad or GeoGebra.

Yff also found the following interesting result, where $X_n$ denotes the $n$-th named triangle center
in the Encyclopedia of Triangle Centers \cite{ETC}.

\begin{theorem}
\label{thm:io}
Let $ABC$ be an arbitrary triangle (Figure~\ref{fig:IO}).
Then $Y_1Y_2\perp X_1X_3$.
\end{theorem}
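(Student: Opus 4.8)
The plan is to work in homogeneous barycentric coordinates with respect to $\triangle ABC$, writing $a,b,c$ for the side lengths. First I would record the four points involved. Intersecting the cevians of Theorem~\ref{thm:yff}, whose feet on the sides are $D_1=(0:a-u:u)$, $E_1=(u:0:b-u)$, $F_1=(c-u:u:0)$, and checking their concurrency with the help of \eqref{eq:u}, one obtains
\[
Y_1=\bigl(u^2:(a-u)(b-u):u(b-u)\bigr).
\]
Since $Y_2$ is the isotomic conjugate of $Y_1$, as observed after Theorem~\ref{thm:yff}, this yields $Y_2=\bigl((a-u)(b-u):u^2:u(a-u)\bigr)$. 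The remaining two points are standard: $X_1=(a:b:c)$ and $X_3=\bigl(a^2S_A:b^2S_B:c^2S_C\bigr)$, where $S_A=\tfrac12(b^2+c^2-a^2)$ and $S_B,S_C$ are obtained cyclically.

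Next I would convert the perpendicularity into a scalar condition. Normalizing each point so that its coordinates sum to $1$ and subtracting, the direction of line $Y_1Y_2$ is $\vec d=S_2\,Y_1-S_1\,Y_2$, where $S_1=ab-au+u^2$ and $S_2=ab-bu+u^2$ are the coordinate sums of $Y_1$ and $Y_2$; here $\vec d$ has zero coordinate sum, as a direction vector must. In the same way one gets the direction $\vec e$ of $X_1X_3$, which does not involve $u$. Now invoke the standard perpendicularity criterion in barycentrics: two lines with direction vectors $(x_i)$ and $(y_i)$, each of zero coordinate sum, are perpendicular if and only if $S_A x_1y_1+S_B x_2y_2+S_C x_3y_3=0$. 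Applying this to $\vec d$ and $\vec e$ reduces Theorem~\ref{thm:io} to a single polynomial identity $\Phi(a,b,c,u)=0$, where $\Phi:=S_A d_1e_1+S_B d_2e_2+S_C d_3e_3$.

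The crux is to show that $\Phi$ vanishes once the defining relation \eqref{eq:u} is imposed. Expanding \eqref{eq:u} gives the cubic $g(u):=2u^3-(a+b+c)u^2+(ab+bc+ca)u-abc$, and it suffices to prove that $g\mid\Phi$ in $\mathbb{Q}(a,b,c)[u]$ — equivalently, that $\Phi$ reduces to $0$ modulo $g$, i.e.\ the remainder of $\Phi$ under division by $g$ (a short list of polynomials in $a,b,c$) is identically zero. This is an ideal-membership verification: polynomial division, or a Gr\"obner basis computation in $\mathbb{Q}[a,b,c,u]$ with the relation adjoined. I expect this to be the real obstacle for a hand computation — the intermediate polynomials are bulky, there is no visible structural cancellation, and I see no purely synthetic reason forcing $Y_1Y_2\perp OI$; mechanizing exactly this kind of reduction is, of course, the theme of the paper.

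Finally, as a cross-check and an alternative route with the same algebraic core, it is worth noting that a line is perpendicular to $X_1X_3=OI$ precisely when it is parallel to the radical axis of the incircle and circumcircle, so Theorem~\ref{thm:io} is equivalent to
\[
\mathrm{pow}(Y_1,\text{incircle})-\mathrm{pow}(Y_1,\text{circumcircle})=\mathrm{pow}(Y_2,\text{incircle})-\mathrm{pow}(Y_2,\text{circumcircle}).
\]
With $\mathrm{pow}(P,\text{circumcircle})=-(a^2yz+b^2zx+c^2xy)$ for a point $P=(x:y:z)$ normalized so that $x+y+z=1$, together with $\mathrm{pow}(P,\text{incircle})=|PI|^2-r^2$ and the barycentric distance formula for $|PI|^2$, one again lands on a polynomial identity in $a,b,c,u$ to be tested against \eqref{eq:u}; in practice I would run whichever of the two formulations produces the smaller expression to reduce.
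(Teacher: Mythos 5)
The paper itself gives no proof of this theorem: it is quoted as Yff's 1963 result, and the authors only pose the open question of finding a synthetic proof. So your proposal cannot be matched against an argument in the text; it has to stand on its own. Judged that way, your setup is correct: the simple coordinates for $Y_1$ and the isotomic-conjugate derivation of $Y_2$ agree with display (\ref{Y1Y2}), the coordinate sums $S_1=ab-au+u^2$ and $S_2=ab-bu+u^2$ are right, $\vec d=S_2Y_1-S_1Y_2$ is a legitimate displacement vector for $Y_1Y_2$, and the criterion $S_Ax_1y_1+S_Bx_2y_2+S_Cx_3y_3=0$ for perpendicular displacement vectors is the standard one from Yiu. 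The reduction of the theorem to the ideal-membership statement $g\mid\Phi$ with $g=2u^3-(a+b+c)u^2+(ab+bc+ca)u-abc$ is exactly the kind of elimination the paper performs elsewhere.

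The gap is that you never actually perform or report the decisive check: you assert that $\Phi$ \emph{should} reduce to zero modulo $g$ and explain how one would test it, but you do not exhibit the result. Every computational proof in this paper closes that loop by displaying the factored eliminant and identifying the vanishing factor; without that, what you have is a correct reduction of the theorem to a finite computation, not a proof. Two remarks that would let you finish cheaply. First, the computation is much lighter than you fear if you show directly that $\vec d$ is proportional (modulo $g$) to $\bigl(a(b-c):b(c-a):c(a-b)\bigr)$, i.e.\ to $X_{513}$ — this is the only place the relation for $u$ is needed, and it is a low-degree reduction; the paper itself cites this fact about the infinite point of $Y_1Y_2$ in the proof of Theorem~\ref{thm:AP}. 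Once the direction is known to be $X_{513}$, the perpendicularity test against the infinite point of $X_1X_3$ is a short identity in $a,b,c$ alone, with $u$ gone entirely. Second, your radical-axis reformulation is valid (the radical axis of the incircle and circumcircle is perpendicular to $OI$), but it is a second unexecuted computation rather than a cross-check; it adds nothing until one of the two identities is actually verified.
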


\begin{figure}[h!t]
\centering
\includegraphics[width=0.5\linewidth]{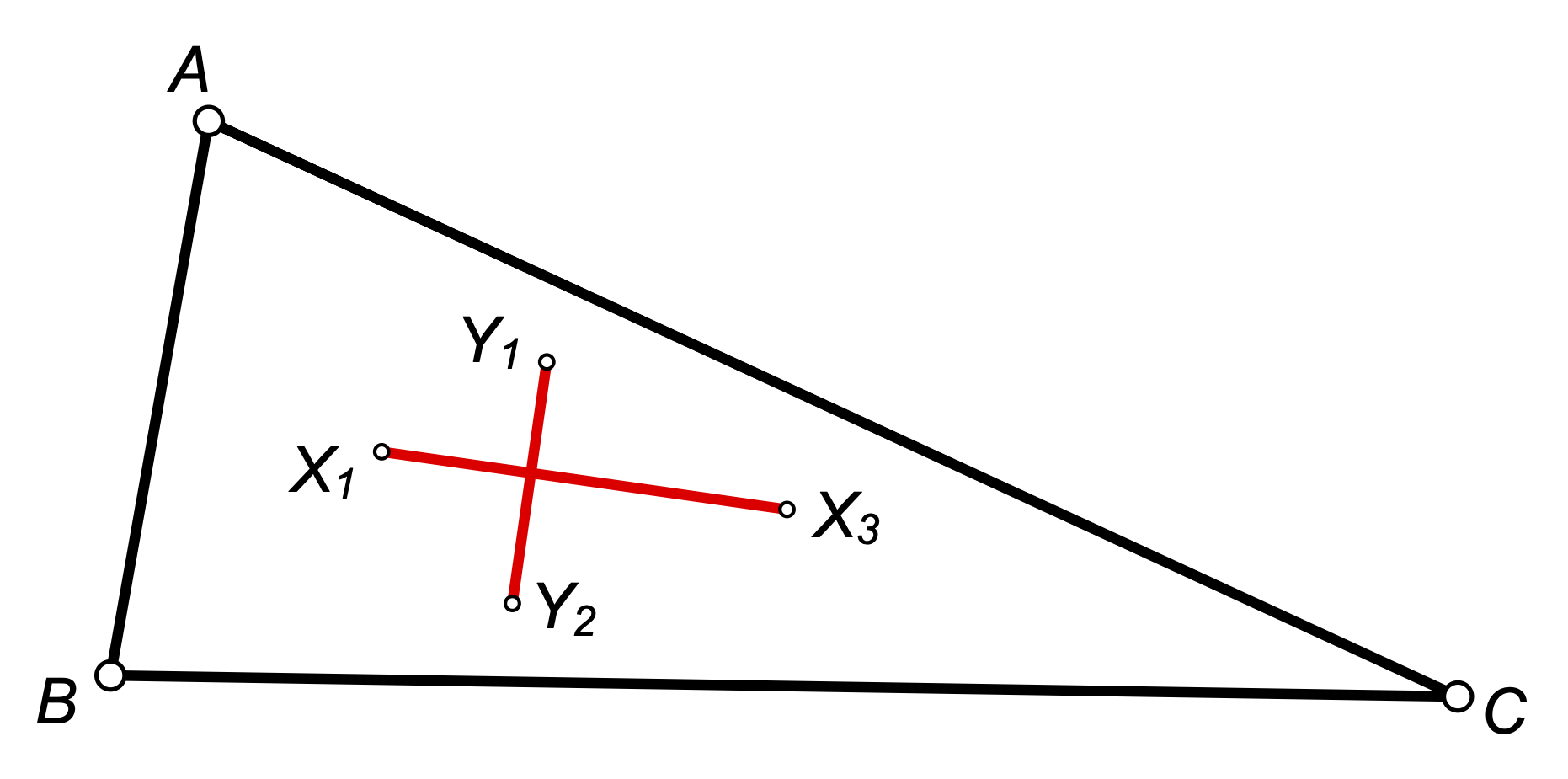}
\caption{red lines are perpendicular}
\label{fig:IO}
\end{figure}

\begin{open}
Is there a purely synthetic geometric proof of Theorem~\ref{thm:io} (without using coordinates)?
\end{open}

In this paper, we will give some new geometric properties of Yff points in various shape triangles.

We found these properties using Mathematica. We started with a random numerical triangle.
As $n$ and $m$ ranged from 1 to 30, we looked at the triangle plus the Yff points plus $X_n$ and $X_m$
and checked (numerically) for various properties in the resulting figure, such as whether two lines were
parallel, two segments had equal length, two angles had equal measure, etc.
We did not find any results that were not equivalent to Theorem~\ref{thm:io}.

We did the same thing for various shaped triangles, such as right triangles, heptagonal triangles,
and triangles whose sides are in arithmetic progression. The results are presented in the next sections.
Since these results were found numerically, valid to 10 decimal places,
the computations did not constitute a proof that the result was true.
At this point, we then used Mathematica again, this time using exact symbolic computations
to prove formally that each result was true.


\section{Right Triangles}

\begin{theorem}
\label{thm:X8X20}
Let $ABC$ be a right triangle (named counterclockwise) with right angle at $B$
(Figure~\ref{fig:X8X20}).
Then $AY_2$, $CY_1$, and $X_8X_{20}$ are concurrent.
\end{theorem}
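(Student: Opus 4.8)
The plan is to prove this by direct computation in barycentric coordinates, mirroring the methodology the authors describe in the introduction. First I would set up the right triangle with the right angle at $B$, so that $b^2 = a^2 + c^2$, and work with the side lengths $a$ and $c$ as free parameters (with $b$ determined by the Pythagorean relation). The key preliminary step is to obtain usable coordinates for $Y_1$ and $Y_2$: from Theorem~\ref{thm:yff} and the cevian description $AF_1 = BD_1 = CE_1 = u$, one gets $D_1$ on $BC$ with $BD_1 = u$, i.e. $D_1 = (0 : a-u : u)$ in barycentrics (up to orientation conventions), and similarly for the other feet, so that $Y_1$ and $Y_2$ have barycentric coordinates that are rational functions of $a$, $b$, $c$, and $u$. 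Because $u$ satisfies the cubic \eqref{eq:u}, every polynomial in $u$ can be reduced modulo $u^3 - (a-u)(b-u)(c-u)$ to a quadratic in $u$; this is the algebraic engine that keeps the computation finite.

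Next I would write down the three objects whose concurrency is claimed. The lines $AY_2$ and $CY_1$ are immediate once $Y_1$, $Y_2$ are known: $AY_2$ passes through $A = (1:0:0)$ and $Y_2$, so its line coordinates are $(0 : -z_{Y_2} : y_{Y_2})$, and similarly for $CY_1$. Their intersection point $P$ is then a cross product of these two line vectors, giving explicit barycentrics for $P$ as polynomials in $a,b,c,u$. For the segment $X_8X_{20}$, I would pull the barycentric coordinates of the Nagel point $X_8 = (b+c-a : c+a-b : a+b-c)$ and of the de Longchamps point $X_{20}$ (whose first barycentric is the known symmetric expression in $a^2, b^2, c^2$) from \cite{ETC}, form the line through them, and then the concurrency assertion becomes the single scalar identity that $P$ lies on that line — equivalently, that a certain $3\times 3$ determinant (rows: $X_8$, $X_{20}$, $P$) vanishes.

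The decisive step is the verification of that determinant identity. After substituting $b^2 = a^2 + c^2$ everywhere and reducing all powers of $u$ using the cubic, the determinant becomes a polynomial in $a$, $c$, and $u$; I expect it to factor as (something nonzero) times the cubic $u^3 - (a-u)(b-u)(c-u)$, so that it vanishes identically on the locus defined by \eqref{eq:u}. Concretely, I would compute the determinant, take its remainder upon polynomial division by the cubic in $u$, and check that the remainder is the zero polynomial in $a$ and $c$; Mathematica's \texttt{PolynomialReduce} or \texttt{Resultant} does this cleanly. Along the way one must confirm that $P$ is not one of the triangle vertices or a point at infinity for generic right triangles (so that the concurrency is genuine and not vacuous), and one should note the orientation hypothesis ``named counterclockwise'' is what pins down which Yff point is $Y_1$ versus $Y_2$ and hence fixes the signs in the cevian coordinates.

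The main obstacle I anticipate is bookkeeping rather than conceptual: getting the orientation conventions exactly right so that $AY_2$ and $CY_1$ (and not, say, $AY_1$ and $CY_2$) are the concurrent cevians, and then managing the size of the symbolic expressions — the unreduced determinant in $a,b,c,u$ before imposing either the Pythagorean relation or the cubic will be large, and the computation only becomes tractable after those two reductions are applied simultaneously. A secondary subtlety is that $u$ is a specific real root of a cubic that may have other real roots for some right triangles; the proof is clean because the identity holds for \emph{every} root of \eqref{eq:u}, so no case analysis on which root is selected is actually needed once the determinant is shown to be divisible by the full cubic.
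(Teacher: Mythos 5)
Your plan is essentially the paper's proof: barycentric coordinates, the cevian/Yff coordinates for $Y_1,Y_2$ as polynomials in $a,b,c,u$, a $3\times3$ determinant expressing the concurrency of $AY_2$, $CY_1$, $X_8X_{20}$, and elimination of $u$ against the cubic (\ref{eq:u}). The one place you diverge is the decisive step, and there your plan is riskier than you suggest. You propose to substitute $b^2=a^2+c^2$ and then verify that the concurrency determinant reduces to the \emph{zero} remainder modulo $2u^3-(a+b+c)u^2+(ab+bc+ca)u-abc$, i.e.\ that it is divisible by the full cubic. What the paper's computation actually establishes is weaker: the resultant of the concurrency polynomial and the cubic with respect to $u$ contains $a^2-b^2+c^2$ as a factor (to the first power), which only guarantees that \emph{some} root of the cubic satisfies the concurrency condition on the right-triangle locus, not that all three do. If only one root works — which the multiplicity-one appearance of that factor makes quite plausible — your \texttt{PolynomialReduce} check returns a nonzero remainder and your stated proof fails; you would then have to retreat to the resultant/\texttt{Eliminate} route, and your closing claim that ``no case analysis on which root is selected is needed'' evaporates. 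At that point you (and, to be fair, the paper) still owe an argument that the common root certified by the vanishing resultant is the particular real root $u\in(0,\min(a,b,c))$ that defines the Yff points, rather than one of the other roots of (\ref{eq:u}); neither your proposal nor the published proof addresses this, but your write-up explicitly leans on the stronger divisibility claim to dismiss it, so you should either verify that divisibility actually holds or supply the root-identification step.
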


\begin{figure}[h!t]
\centering
\includegraphics[width=0.25\linewidth]{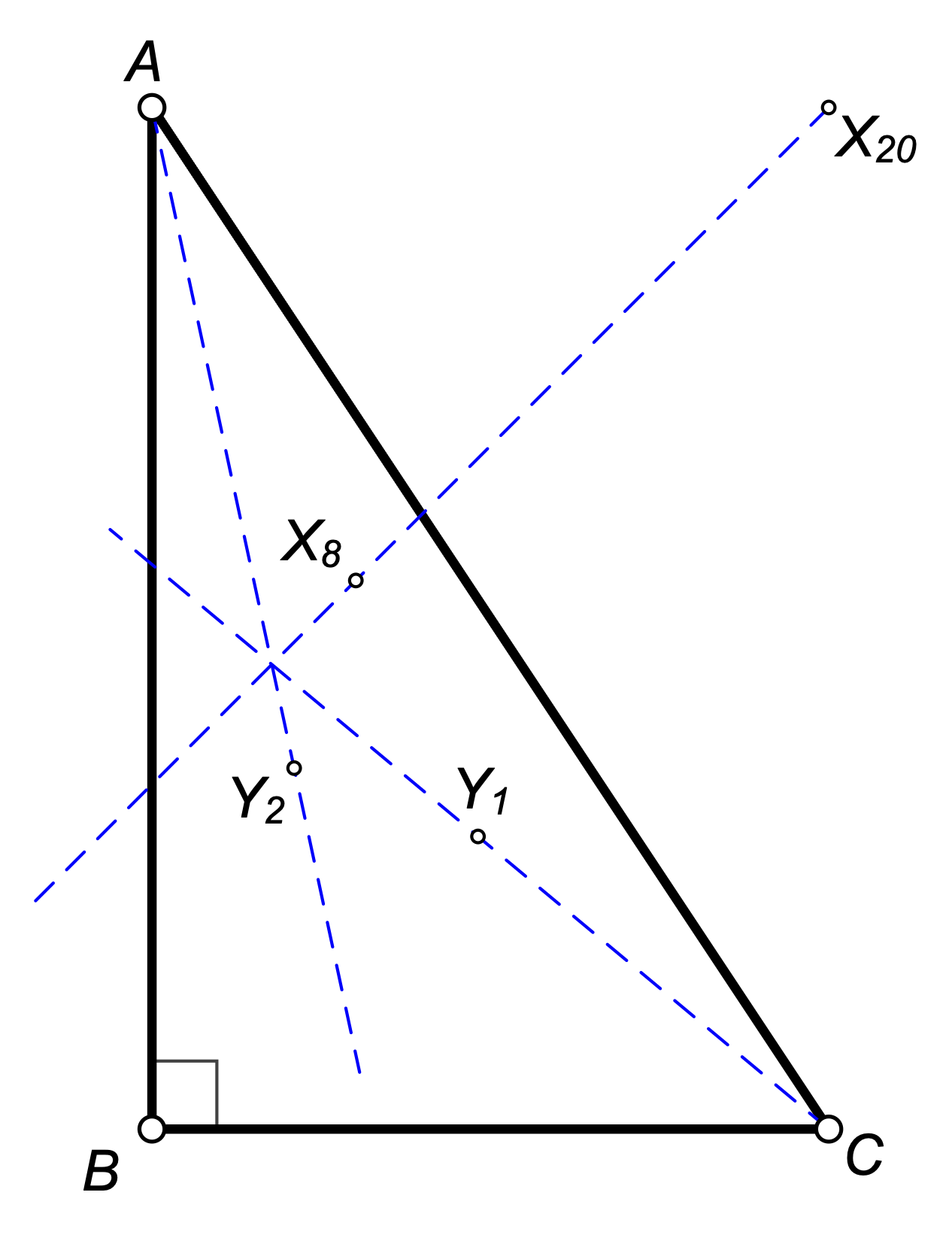}
\caption{dashed lines are concurrent}
\label{fig:X8X20}
\end{figure}

\begin{proof}
Set up a barycentric coordinate system using $\triangle ABC$ as the reference triangle,
so that he coordinates for $A$, $B$, and $C$ are $(1:0:0)$, $(0:1:0)$, and $(0:0:1)$, respectively.
The coordinates for $X_8$ and $X_{20}$ can be found in \cite{ETC}.
\begin{align*}
X_8&=a - b - c::\\
X_{20}&=3 a^4-2 a^2 b^2-2 a^2 c^2-b^4+2 b^2 c^2-c^4::
\end{align*}
where ``$f(a,b,c)$::'' denotes the coordinates $\bigl(f(a,b,c):f(b,c,a):f(c:a:b)\bigr)$.

The coordinates for $Y_1$ and $Y_2$ can be found in \cite{Yff}.
\begin{align*}
Y_1&=\left(\frac{c-u}{b-u}\right)^{1/3}::\\
Y_2&=\left(\frac{b-u}{c-u}\right)^{1/3}::
\end{align*}
where $u$ is given in Theorem~\ref{thm:yff}.
We will call these the \emph{symmetric coordinates} for $Y_1$ and $Y_2$.

Yff also gives simpler coordinates for $Y_1$ that are not as symmetric: $\left(\frac{u}{b-u}:\frac{a-u}{u}:1\right)$.
Eliminating fractions and finding similar coordinates for $Y_2$ give
\begin{equation}
\label{Y1Y2}
\begin{aligned}
Y_1&=\left(u^2:(a-u)(b-u):u(b-u)\right)\\
Y_2&=\left((a-u)(b-u):u^2:u(a-u)\right)
\end{aligned}
\end{equation}
We will call these the \emph{simple coordinates} for $Y_1$ and $Y_2$.

If we represent the line $px+qy+rz=0$ by the triple $(p:q:r)$,
then, we get the nice symmetric properties that the line through two points is
given in Mathematica by the vector cross product \texttt{Cross[point1,point2]} and
the point of intersection of two lines is given by \texttt{Cross[line1,line2]}.
Having found the barycentric coordinates for all the points in Figure~\ref{fig:X8X20},
we can now calculate the equations of the three dashed lines.
To avoid cube roots, we use the coordinates for $Y_1$ and $Y_2$ as given in display (\ref{Y1Y2}) .

From \cite[\S 4.3]{Yiu}, we have that three lines $(p_i:q_i:r_i)$ are concurrent if and only if
$$\left|
\begin{array}{lll}
p_1&q_1&r_1\\
p_2&q_2&r_2\\
p_3&q_3&r_3
\end{array}
\right|=0.$$
In our case, the condition for the three dashed lines to be concurrent is
\void{
\begin{multline}
\label{eq:1}
$$-a^4 b c u+a^4 b u^2+a^4 c u^2-a^3 b^2 c^2+a^3 b^2 c u+a^3 b c^3+a^3 b c^2 u-2 a^3 b c u^2\\
-2 a^3 b u^3-a^3 c^3 u+2 a^3 cu^3-a^2 b^3 c^2+3 a^2 b^3 c u-2 a^2 b^3 u^2-2 a^2 b^2 c u^2\\
+2 a^2 b^2 u^3+a^2 b c^4-2 a^2 b c^3 u+2 a^2 b c^2 u^2-a^2c^4 u+2 a^2 c^3 u^2-2 a^2 c^2 u^3\\
+a b^4 c^2-a b^4 c u+a b^3 c^3-3 a b^3 c^2 u+2 a b^3 c u^2+2 a b^3 u^3-a b^2 c^4+2 ab^2 c^3 u\\
-2 a b^2 c u^3-a b c^5+a b c^4 u-2 a b c^3 u^2+2 a b c^2 u^3+a c^5 u-2 a c^3 u^3+b^5 c^2\\
-2 b^5 c u+b^5 u^2-2b^4 c^3+2 b^4 c^2 u+b^4 c u^2-2 b^4 u^3+b^3 c^3 u-2 b^3 c^2 u^2+2 b^2 c^5\\
-3 b^2 c^4 u+2 b^2 c^3 u^2-b c^6+b c^5 u+b c^4u^2+c^6 u-3 c^5 u^2+2 c^4 u^3=0.
\end{multline}
}
\begin{equation}
\label{eq:2}
\begin{aligned}
&-2 a^7 b u+2 a^7 u^2+2 a^6 b^2 u-4 a^6 b c u+4 a^6 b u^2+4 a^6 c u^2-6 a^6 u^3+4 a^5 b^3 u\\
&-4 a^5 b^2u^2+6 a^5 b c u^2-4 a^5 b u^3-6 a^5 c u^3+4 a^5 u^4-4 a^4 b^4 u-4 a^4 b^3 u^2\\
&+6 a^4 b^2 u^3+4 a^4 bc^3 u-4 a^4 b c^2 u^2-2 a^4 b c u^3+4 a^4 b u^4-4 a^4 c^3 u^2+4 a^4 c^2 u^3\\
&-2 a^3 b^5 u+6 a^3 b^4u^2+2 a^3 b c^4 u-4 a^3 b c^3 u^2+4 a^3 b c^2 u^3-2 a^3 c^4 u^2+4 a^3 c^3 u^3\\
&-8 a^3 c^2 u^4+2 a^2 b^6u+4 a^2 b^5 c u-4 a^2 b^4 c u^2-2 a^2 b^4 u^3-4 a^2 b^3 c^3 u-2 a^2 b^2 c^4 u\\
&+4 a^2 b^2 c^3 u^2+2 a^2c^4 u^3-4 a b^6 u^2-6 a b^5 c u^2+4 a b^5 u^3+4 a b^4 c^2 u^2+6 a b^4 c u^3\\
&-4 a b^4 u^4+8 a b^3 c^3u^2-4 a b^3 c^2 u^3-8 a b^2 c^3 u^3-2 a b c^5 u^2+2 a c^5 u^3+4 a c^4 u^4\\
&+2 b^6 u^3+2 b^5 c u^3-4 b^5u^4-4 b^4 c^2 u^3-4 b^3 c^3 u^3+8 b^3 c^2 u^4+2 b^2 c^4 u^3+2 b c^5 u^3\\
&-4 b c^4 u^4=0
\end{aligned}
\end{equation}
Note that the radical form for $u$ is too complex, so we have left $u$ as an unevaluated variable,
noting that $u$ satisfies Equation~(\ref{eq:u}).

We can eliminate $u$ from equations (\ref{eq:2}) and (\ref{eq:u}).
This can be accomplished using elimination theory and Gr\"obner bases.
The process is straightforward, but tedious and is best left for computers.
In Mathematica, the \texttt{Eliminate} function is used for this purpose.
Eliminating $u$ from equations (\ref{eq:2}) and (\ref{eq:u}) gives
\void{
$$a^6 b^3 c^4+a^5 b^5 c^3-4 a^5 b^4 c^4+a^5 b^3 c^5-2 a^4 b^6 c^3+3 a^4 b^5 c^4+4 a^3 b^6 c^4-4 a^3 b^5 c^5$$
$$+2 a^2 b^8 c^3-5   a^2 b^7 c^4+4 a^2 b^5 c^6-a^2 b^3 c^8-a b^9 c^3+5 a b^7 c^5-4 a b^6 c^6-3 a b^5 c^7$$
$$+4 a b^4 c^8-a b^3 c^9+b^9 c^4-2 b^8c^5+2 b^6 c^7-b^5 c^8=0$$
}
$$-a^{11} b^4c^2-a^{10} b^6 c+3 a^{10} b^5 c^2-2 a^{10} b^4 c^3+a^9 b^7 c+3 a^9 b^5 c^3-a^9
   b^4 c^4+2 a^8 b^8 c$$
$$-5 a^8 b^7 c^2+a^8 b^6 c^3-2 a^7 b^9 c+a^7 b^8 c^2+a^7 b^4 c^6-a^6 b^{10} c+3 a^6
   b^9 c^2-a^6 b^6 c^5$$
$$-3 a^6 b^5 c^6+2 a^6 b^4 c^7+a^5 b^{11} c-3 a^5 b^9 c^3-a^5 b^8 c^4+5 a^5 b^7 c^5-3
   a^5 b^5 c^7+a^5 b^4 c^8$$
$$-a^4 b^{11} c^2+a^4 b^{10} c^3+2 a^4 b^9 c^4-2 a^4 b^8 c^5-a^4 b^7 c^6+a^4 b^6
   c^7=0$$
Polynomials of degree 17 in three variables are not easy to factor, but using the \texttt{Factor}
function in Mathematica, we find that this condition is equivalent to
$$a^4 b^4 c (a-c) (a-b+c) (a+b+c) (a^2-b^2+c^2 )(a^2 c+a b^2-3 a b c+a c^2-b^3+b^2c)=0.$$
The factor $a^2-b^2+c^2$ shows that this condition will be true for a right triangle with vertex at $B$.
\end{proof}

This proof is not satisfying.
The computation involved in this proof is enormous. It seems highly unlikely that Theorem~\ref{thm:X8X20}
is true merely because the determinant condition for collinearity happens to evaluate to 0.
The points $Y_1$ and $Y_2$ have nice geometric definitions as shown in Figure~\ref{fig:YffPoints}.
The points $X_8$ and $X_{20}$ also have nice geometric definitions.
(Point $X_8$ is the Nagel Point of a triangle and Point $X_{20}$, also known as the de Longchamps Point of a triangle, is the
reflection of the orthocenter about the circumcenter.)

\begin{open}
Is there a purely synthetic geometric proof of Theorem~\ref{thm:X8X20}?
\end{open}

\void{
\begin{theorem}
Let $ABC$ be a right triangle (named counterclockwise) with right angle at $B$
(Figure~\ref{fig:rightTriangle-X8X22}).
Then $AY_2$, $CY_1$, and $X_8X_{22}$ are concurrent.
\end{theorem}

\begin{figure}[h!t]
\centering
\includegraphics[width=0.5\linewidth]{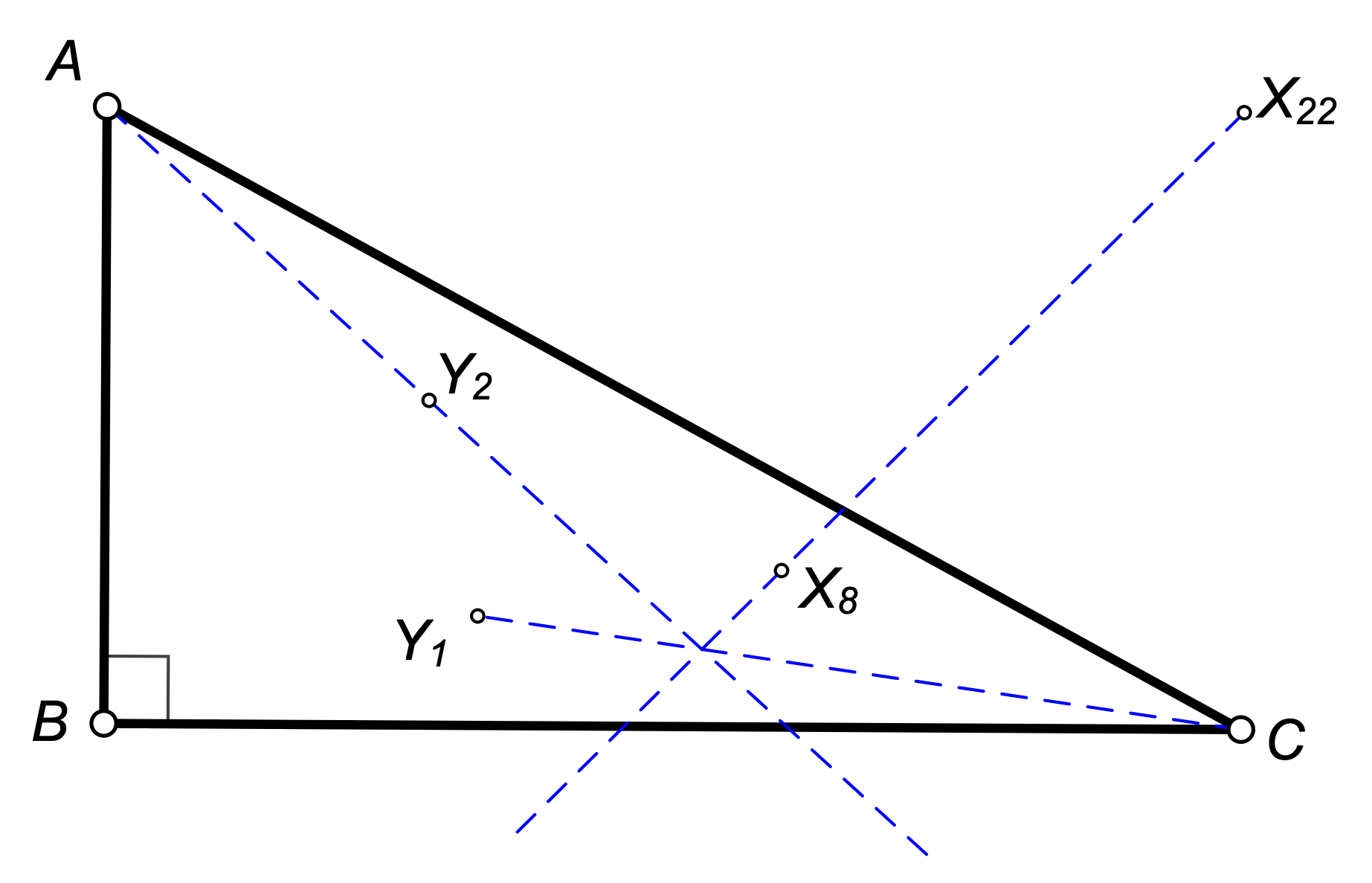}
\caption{dashed lines are concurrent}
\label{fig:rightTriangle-X8X22}
\end{figure}
}

\begin{theorem}
Let $ABC$ be a right triangle (named counterclockwise) with right angle at $B$
and $\angle ACB=30\degrees$ (Figure~\ref{fig:30-60-90-X8X21}).
Then $AY_1$, $BY_2$, and $X_8X_{21}$ are concurrent.
\end{theorem}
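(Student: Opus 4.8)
The plan is to follow the same strategy as in the proof of Theorem~\ref{thm:X8X20}. Set up a barycentric coordinate system with $\triangle ABC$ as the reference triangle, so that $A=(1:0:0)$, $B=(0:1:0)$, $C=(0:0:1)$, and take the barycentric coordinates of $X_8$ (the Nagel point) and of $X_{21}$ (the Schiffler point) from \cite{ETC}. For $Y_1$ and $Y_2$ use the simple coordinates in display~(\ref{Y1Y2}), keeping $u$ as an unevaluated variable that satisfies Equation~(\ref{eq:u}). Representing a line by the triple of its coefficients, the three relevant lines are the cross products \texttt{Cross[$A$,$Y_1$]}, \texttt{Cross[$B$,$Y_2$]}, and \texttt{Cross[$X_8$,$X_{21}$]}; a short calculation gives $AY_1=(0:u:u-a)$ and $BY_2=(u:0:u-b)$ after cancelling the common factors $b-u$ and $a-u$. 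The three lines are concurrent exactly when the $3\times 3$ determinant of their coefficient triples vanishes, which yields a polynomial relation $P(a,b,c,u)=0$ to be verified.

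Rather than eliminating $u$ from the fully general relation as in Theorem~\ref{thm:X8X20}, it is cheaper to build the hypotheses in from the start. A right angle at $B$ is the relation $b^{2}-a^{2}-c^{2}=0$; under it the law of cosines gives $\cos C=(a^{2}+b^{2}-c^{2})/(2ab)=a/b$, so the second hypothesis $\angle ACB=30\degrees$ becomes $a/b=\tfrac{\sqrt3}{2}$, i.e.\ $4a^{2}=3b^{2}$, which together with $b^{2}=a^{2}+c^{2}$ is equivalent to $a^{2}-3c^{2}=0$. Hence it suffices to prove that after adjoining $b^{2}-a^{2}-c^{2}=0$ to the ideal generated by $P=0$ and Equation~(\ref{eq:u}) and eliminating both $u$ and $b$ (a Gr\"obner basis computation; \texttt{Eliminate} in Mathematica), the resulting homogeneous polynomial in $a$ and $c$ is divisible by $a^{2}-3c^{2}$. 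Running \texttt{Factor} on it should display the factor $a^{2}-3c^{2}$ next to only extraneous factors (powers of $a$ and $c$ and the like), just as the factor $a^{2}-b^{2}+c^{2}$ turned up in the factorization in Theorem~\ref{thm:X8X20}.

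As a concrete alternative one can fix the shape outright: a $30$-$60$-$90$ triangle with right angle at $B$ has $a:b:c=\sqrt3:2:1$, so putting $a=\sqrt3$, $b=2$, $c=1$ turns Equation~(\ref{eq:u}) into the cubic $2u^{3}-(3+\sqrt3)u^{2}+(2+3\sqrt3)u-2\sqrt3=0$, which has a single real root $u$, and the claim reduces to checking that the $3\times 3$ determinant above vanishes identically in the number field $\mathbb{Q}(\sqrt3,u)$ --- a finite and purely mechanical verification.

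The main obstacle is the size of the symbolic computation: as the authors remark for the companion Theorem~\ref{thm:X8X20}, the elimination and factoring are enormous, and imposing $b^{2}=a^{2}+c^{2}$ before eliminating $u$ is the device that keeps the work manageable. As in that case, the argument will confirm \emph{that} the concurrency holds without revealing \emph{why}; since $X_8$ (the Nagel point) and $X_{21}$ (the Schiffler point) both have clean geometric descriptions, a synthetic proof would be desirable, but this approach is not likely to supply one.
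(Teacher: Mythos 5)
Your proposal is correct and follows essentially the same route as the paper: compute the determinant condition for concurrency in barycentric coordinates using the simple coordinates (\ref{Y1Y2}), eliminate $u$ against Equation~(\ref{eq:u}), and verify that the $30$--$60$--$90$ conditions annihilate the result. The only difference is bookkeeping --- the paper eliminates $u$ first and then substitutes $b=\sqrt{a^2+c^2}$, $a=c\sqrt3$ (one factor of the eliminated polynomial vanishes under this substitution), whereas you adjoin the right-angle relation before eliminating; both are valid.
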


\begin{figure}[h!t]
\centering
\includegraphics[width=0.4\linewidth]{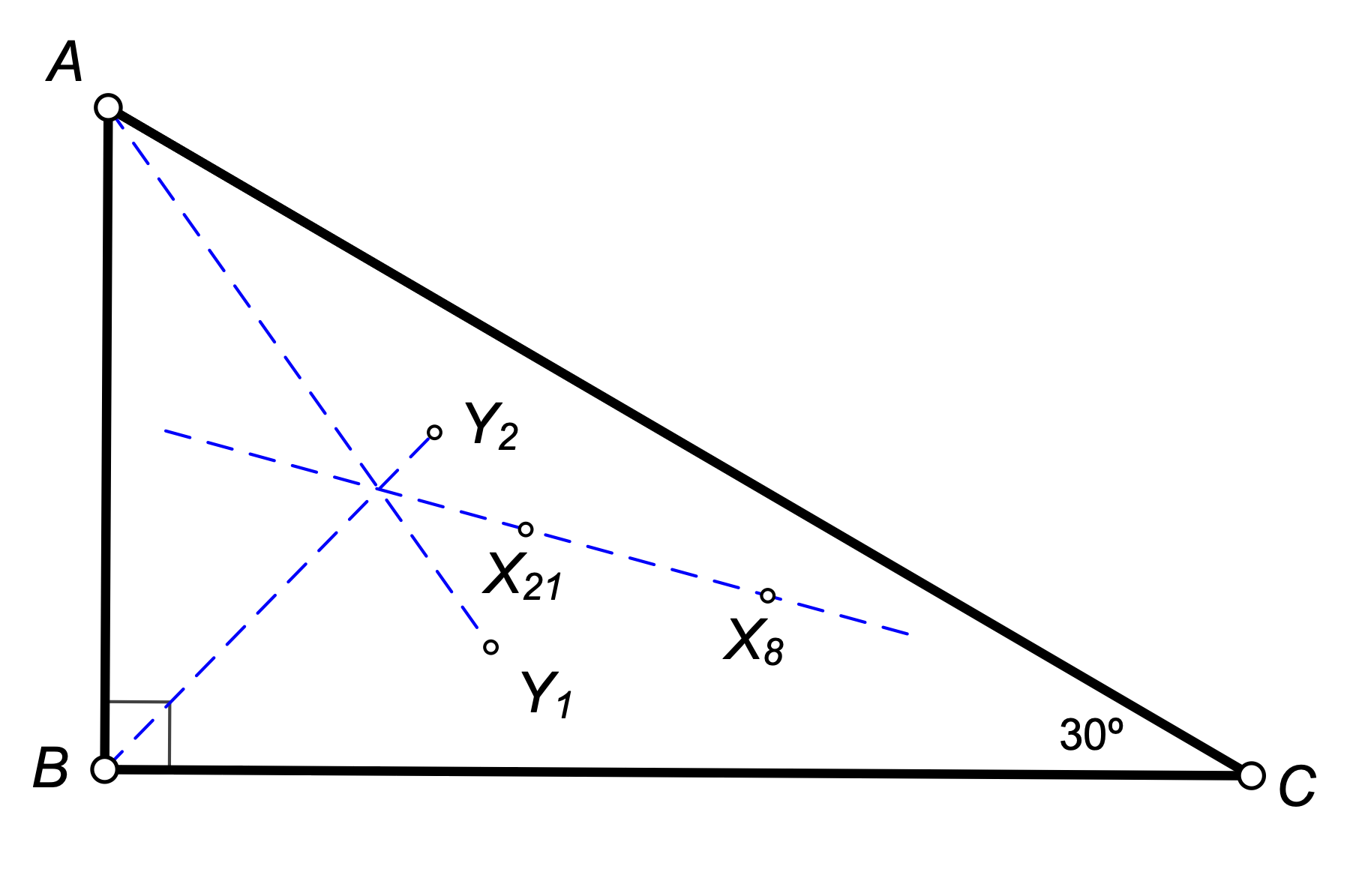}
\caption{dashed lines are concurrent}
\label{fig:30-60-90-X8X21}
\end{figure}

\begin{proof}
Using the same procedure as in the proof of Theorem~\ref{thm:X8X20},
we find that the condition for concurrence is
$(a - b) (a^4 + a^3 b - 2 a^2 c (b + c) + a b (b^2 - 2 b c - 3 c^2) + (b^2 - c^2)^2) (a + b - c - 2 u) (a - u) (b - u) u=0.$
Eliminating $u$ from this equation and Equation (\ref{eq:u}) gives
$a^4 (a - b) b^4 c (a + b + c) (a^2 b + a b^2 - 3 a b c + a c^2 + 
    b c^2 - c^3) (a^3 + b^3 - a^2 c - a b c - b^2 c - a c^2 - b c^2 +  c^3) = 0.$
Setting $b=\sqrt{a^2+c^2}$ and $a=c\sqrt3$ shows that this condition is equivalent to $0=0$ which is always true.
\end{proof}

The following result comes from \cite[\S4.5]{Yiu}.

\begin{lemma}
\label{lemma:inf}
The infinite point on the line represented by $(p:q:r)$ is $(q - r: r - p: p - q)$.
\end{lemma}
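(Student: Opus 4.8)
The plan is to exploit the point–line duality in barycentric coordinates that was already used in the proof of Theorem~\ref{thm:X8X20}: the line joining two points is their cross product, and dually the intersection point of two lines represented by triples is the cross product of those triples. The key preliminary fact I would invoke is the standard one that the \emph{line at infinity} in barycentric coordinates has equation $x+y+z=0$; equivalently, a point $(x:y:z)$ is a finite point precisely when $x+y+z\neq 0$, and the infinite points are adjoined exactly as the locus $x+y+z=0$. Thus the line at infinity is represented by the triple $(1:1:1)$.

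Given this, the infinite point on the line $\ell=(p:q:r)$ is, by definition, the intersection of $\ell$ with the line at infinity. Applying the cross-product rule for the intersection of two lines, this point is
\[
(p:q:r)\times(1:1:1)=(q\cdot 1-r\cdot 1:\ r\cdot 1-p\cdot 1:\ p\cdot 1-q\cdot 1)=(q-r:r-p:p-q),
\]
which is exactly the expression claimed in the lemma. To make the argument self-contained one can verify directly that $(q-r:r-p:p-q)$ has coordinate sum $(q-r)+(r-p)+(p-q)=0$, so it is indeed an infinite point, and that it satisfies the equation of $\ell$, since $p(q-r)+q(r-p)+r(p-q)=pq-pr+qr-pq+pr-qr=0$. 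Because a line distinct from the line at infinity meets the line at infinity in a unique point, this is the infinite point on $\ell$.

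There is essentially no obstacle in this argument; it is a one-line cross-product computation together with the identification of $x+y+z=0$ as the line at infinity. The only point requiring a word of care is the degenerate case $(p:q:r)\propto(1:1:1)$, in which $\ell$ \emph{is} the line at infinity and the cross product vanishes, so the statement is vacuous in that case and should be excluded from the hypothesis.
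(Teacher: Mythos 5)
Your proof is correct. Note that the paper does not actually prove this lemma --- it simply cites it from Yiu, \S 4.5 --- so your argument supplies a proof where the paper gives none. Your route is the standard one: identify the line at infinity with the triple $(1:1:1)$ and intersect via the cross product, then verify directly that the resulting point $(q-r:r-p:p-q)$ has zero coordinate sum and lies on $(p:q:r)$. The cross-product step is consistent with the duality the paper itself uses in the proof of Theorem~\ref{thm:X8X20}, and your remark excluding the degenerate case $(p:q:r)\propto(1:1:1)$ is an appropriate (and easily overlooked) caveat. Nothing further is needed.
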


\begin{theorem}
Let $ABC$ be a right triangle with right angle at $B$
(Figure~\ref{fig:BX8}).
Then $BX_8\perp Y_1Y_2$.
\end{theorem}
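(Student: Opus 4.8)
The plan is to sidestep the heavy symbolic computation used in the earlier proofs by reducing the statement to Theorem~\ref{thm:io}. In the Euclidean plane, two lines that are both perpendicular to a common third line are parallel (or coincide). Since Theorem~\ref{thm:io} asserts that $Y_1Y_2\perp X_1X_3$ for \emph{every} triangle, the claim $BX_8\perp Y_1Y_2$ is equivalent to the assertion that, in a right triangle with right angle at $B$, the line $BX_8$ is parallel to the line $X_1X_3$. So the whole problem comes down to proving this one parallelism.

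To establish it, I would work with the position vectors $A$, $B$, $C$ of the vertices. For a right triangle with the right angle at $B$, Thales' theorem gives that the circumcenter $X_3$ is the midpoint of the hypotenuse $AC$, so $A+C=2X_3$ and hence $A+B+C=2X_3+B$; that is, the centroid satisfies $3X_2=2X_3+B$. Next I would invoke the classical Nagel-line identity $X_8=3X_2-2X_1$ (equivalently $\overrightarrow{X_1X_8}=3\,\overrightarrow{X_1X_2}$), which follows in a couple of lines from the barycentric coordinates $X_8=(s-a:s-b:s-c)$ and $X_1=(a:b:c)$ after normalizing. Substituting $3X_2=2X_3+B$ into it yields $X_8=2X_3+B-2X_1$, so $X_8-B=2(X_3-X_1)$, i.e. $\overrightarrow{BX_8}=2\,\overrightarrow{X_1X_3}$. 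In particular $BX_8\parallel X_1X_3$, and combining this with Theorem~\ref{thm:io} gives $BX_8\perp Y_1Y_2$.

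In this conceptual route there is essentially no obstacle: the only content is the two elementary vector identities (circumcenter = midpoint of hypotenuse, and $X_8=3X_2-2X_1$), both entirely standard, followed by a one-line deduction. The difficulty only reappears if one insists on a self-contained verification in the computational style of the other proofs above: compute the line $BX_8$ as $\texttt{Cross}\bigl((0:1:0),\,(a-b-c:b-c-a:c-a-b)\bigr)$ and the line $Y_1Y_2$ as the cross product of the simple coordinates~(\ref{Y1Y2}), extract their infinite points via Lemma~\ref{lemma:inf}, impose the perpendicularity criterion of \cite[\S4.5]{Yiu}, eliminate $u$ using Equation~(\ref{eq:u}) via a Gr\"obner basis, and factor the result; one expects the factor $a^2-b^2+c^2$ to appear, which is exactly the right-angle-at-$B$ condition. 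The main obstacle there is, as in the proof of Theorem~\ref{thm:X8X20}, the size of the polynomial produced before elimination and the difficulty of factoring the resulting high-degree polynomial in $a,b,c$ — which is precisely why I would prefer the argument through Theorem~\ref{thm:io}.
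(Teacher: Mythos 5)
Your proposal is correct, and it shares the paper's key reduction: both arguments prove $BX_8\parallel X_1X_3$ and then invoke Theorem~\ref{thm:io} to transfer the perpendicularity from $X_1X_3$ to $BX_8$. Where you differ is in how the parallelism is verified. The paper computes the lines $BX_8=(a+b-c:0:a-b-c)$ and $X_1X_3=(b:c-a:-b)$ in barycentric coordinates and checks via Lemma~\ref{lemma:inf} that both have the infinite point $(b+c-a:-2b:a+b-c)$. You instead use normalized position vectors: the Nagel-line identity $X_8=3X_2-2X_1$ (immediate from $X_1=\frac{aA+bB+cC}{2s}$ and $X_8=\frac{(s-a)A+(s-b)B+(s-c)C}{s}$) together with the right-angle fact $A+C=2X_3$, i.e.\ $3X_2=2X_3+B$, giving $\overrightarrow{BX_8}=2\,\overrightarrow{X_1X_3}$. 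Both computations are short and correct (and $X_1\neq X_3$ since a right triangle is not equilateral, so the vector on the right is nonzero). Your route buys a little more: it yields the quantitative relation $\overrightarrow{BX_8}=2\,\overrightarrow{X_1X_3}$ rather than mere parallelism, and it isolates exactly where the hypothesis enters --- the identity $3X_2=2X_3+B$ holds precisely when $X_3$ is the midpoint of $AC$, i.e.\ when the angle at $B$ is right --- whereas in the paper's infinite-point computation the role of the right angle is somewhat hidden inside the coordinate $(1:0:1)$ for $X_3$. The fallback computational route you sketch at the end is unnecessary given the clean vector argument, but it would also work.
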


\begin{figure}[h!t]
\centering
\includegraphics[width=0.2\linewidth]{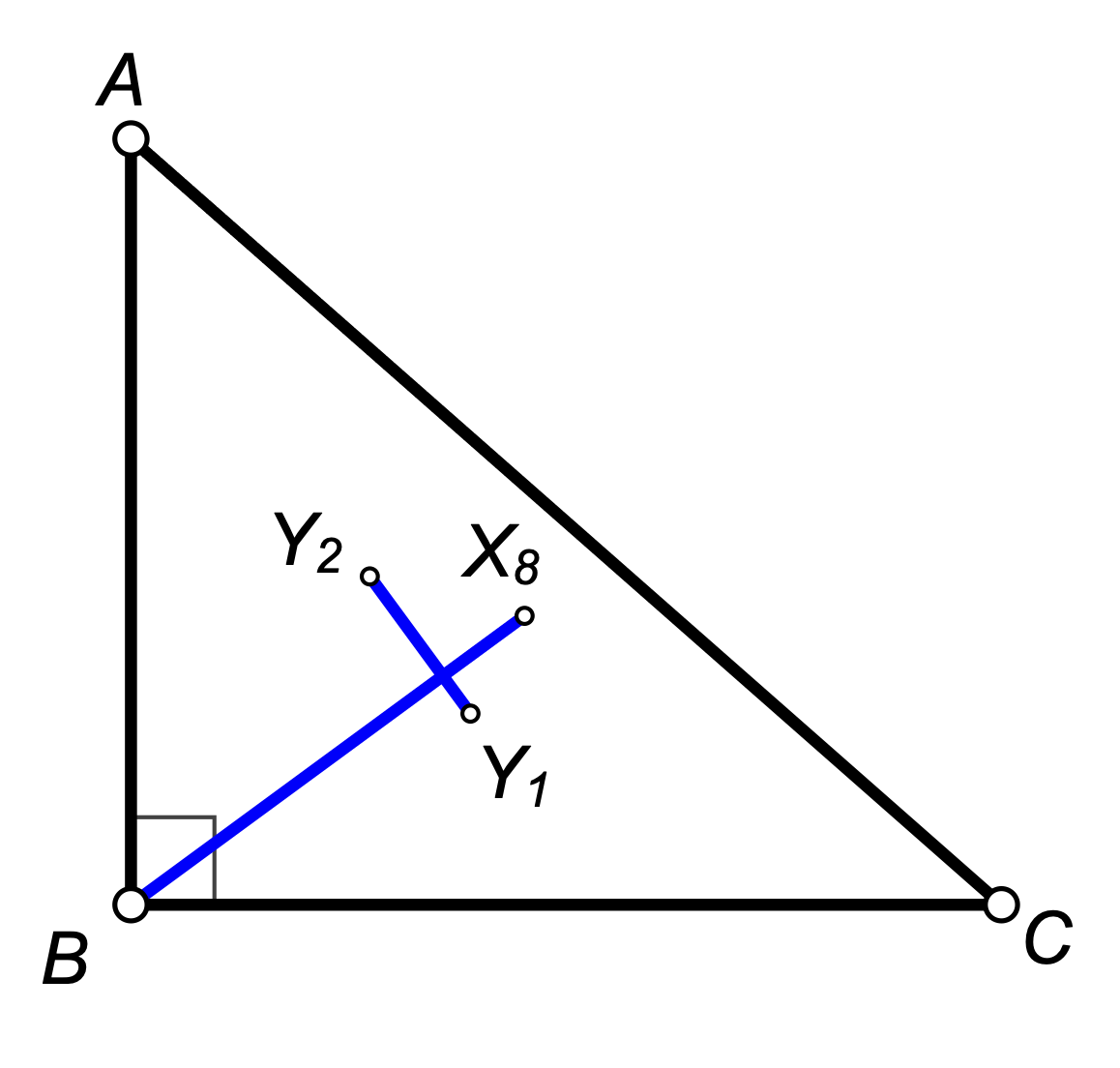}
\caption{blue lines are perpendicular}
\label{fig:BX8}
\end{figure}

\begin{proof}
From \cite{ETC}, the barycentric coordinates for $X_8$ are $(a - b - c:-a + b - c:-a - b + c)$.
The barycentric coordinates for $B$ are $(0:1:0)$.
The line $BX_8$ is represented by \texttt{Cross}$[B,X_8]=(a+b-c:0:a-b-c)$.
Using Lemma~\ref{lemma:inf}, we find that the infinite point on the line $BX_8$ has coordinates $(b+c-a:-2b:a+b-c)$.

Now the coordinates for $X_1$ are $(a:b:c)$. The circumcenter, $X_3$, is the midpoint of
the hypotenuse in a right triangle, so its coordinates are $(1:0:1)$.
The line $X_1X_3$ is therefore represented by \texttt{Cross}$[X_1,X_3]=(b:c-a:-b)$.
So, the infinite point on the line $X_1X_3$ in a right triangle is $(b+c-a:-2b:a+b-c)$.

Since $BX_8$ and $X_1X_3$ have the same point at infinity, they must be parallel.
But $X_1X_3\perp Y_1Y_2$ by Theorem~\ref{thm:io}. Therefore, $BX_8\perp Y_1Y_2$.
\end{proof}

\section{$60\degrees$ Triangles}

\begin{theorem}
Let $ABC$ be a triangle (named counterclockwise) with $\angle C=60\degrees$
(Figure~\ref{fig:60X8}).
Then $AY_1$, $BY_2$, and $X_3X_8$ are concurrent.
\end{theorem}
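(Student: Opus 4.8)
The plan is to follow the computational template used in the proof of Theorem~\ref{thm:X8X20}. First I would set up a barycentric coordinate system with $\triangle ABC$ as the reference triangle, so that $A=(1:0:0)$, $B=(0:1:0)$, $C=(0:0:1)$. From \cite{ETC}, the circumcenter is $X_3=a^2(b^2+c^2-a^2)::$ and the Nagel point is $X_8=(-a+b+c:a-b+c:a+b-c)$. For the Yff points I would avoid cube roots by using the simple coordinates of display~(\ref{Y1Y2}), namely $Y_1=(u^2:(a-u)(b-u):u(b-u))$ and $Y_2=((a-u)(b-u):u^2:u(a-u))$, leaving $u$ as an unevaluated variable subject to Equation~(\ref{eq:u}); the counterclockwise labelling fixes which of the two is $Y_1$ and is already encoded in these formulas.

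Next I would compute the three lines as vector cross products. Since $A=(1:0:0)$, the line $AY_1$ is $(0:u(b-u):-(a-u)(b-u))$, which for a genuine triangle (where $0<u<\min(a,b,c)$, so $b-u\neq 0$) reduces to $(0:u:-(a-u))$; similarly $BY_2$ reduces to $(u:0:-(b-u))$; and $X_3X_8$ is the cross product of $X_3$ and $X_8$, a line whose coefficients are symmetric polynomials in $a,b,c$. By the determinant criterion of \cite[\S4.3]{Yiu}, these three lines are concurrent exactly when the $3\times 3$ determinant formed from their coordinate triples vanishes. Expanding it yields a polynomial condition $P(a,b,c,u)=0$.

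I would then eliminate $u$ between $P(a,b,c,u)=0$ and Equation~(\ref{eq:u}) using resultants or Gr\"obner bases (Mathematica's \texttt{Eliminate}), obtaining a polynomial identity in $a,b,c$ alone. Factoring the result with \texttt{Factor}, I expect one factor to be equivalent to $a^2-ab+b^2-c^2$, which by the law of cosines is precisely the statement $\angle C=60\degrees$; the other factors should be powers of $a$, $b$, $c$, the perimeter $a+b+c$, and possibly one further extraneous curve, all of which correspond to degenerate configurations and can be discarded. Substituting $c^2=a^2-ab+b^2$ back into the concurrence condition then collapses it to $0=0$, completing the proof.

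As in the authors' own remark following Theorem~\ref{thm:X8X20}, the main obstacle is not conceptual but the sheer size of the intermediate polynomials: the determinant $P$, and especially the $u$-eliminant, are large, so the factorization must be carried out by computer; one must also verify that the $60\degrees$ factor genuinely appears in the factorization (and is not merely an artifact to be cancelled) and that on that locus the three lines $AY_1$, $BY_2$, $X_3X_8$ are pairwise distinct, so that the vanishing determinant really does witness a common point rather than a coincidence of lines.
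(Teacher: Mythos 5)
Your plan is the same computational template the paper itself uses, and it would succeed, but the actual computation turns out to be simpler than you anticipate: the concurrence determinant for $AY_1$, $BY_2$, and $X_3X_8$ already factors over $\mathbb{Q}[a,b,c,u]$ as
$(a-b)(a+b-c)(a^2-ab+b^2-c^2)(a+b-c-2u)\,u=0$,
so the Law-of-Cosines factor $a^2-ab+b^2-c^2$ appears \emph{before} any elimination, and the paper simply notes that $c^2=a^2+b^2-ab$ when $\angle C=60\degrees$ kills that factor identically in $u$. No Gr\"obner basis or \texttt{Eliminate} step is needed for this theorem, in contrast to Theorem~\ref{thm:X8X20}. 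One caution about the route you describe: the vanishing of the $u$-eliminant on the locus $a^2-ab+b^2-c^2=0$ only certifies that \emph{some} common root $u$ of the two polynomials exists, not that the particular real root of Equation~(\ref{eq:u}) that defines the Yff points satisfies the concurrence condition. Consequently your final step---substituting $c^2=a^2-ab+b^2$ back into the $u$-dependent determinant and checking that it vanishes identically in $u$---is the step that actually carries the logical weight, and it is exactly what the explicit factorization above makes transparent; the elimination step in your plan is then redundant scaffolding. Your closing remarks about checking that the three lines are pairwise distinct, and that the $60\degrees$ factor is genuinely present rather than an artifact, are sensible and apply equally to the paper's version of the argument.
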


\begin{figure}[h!t]
\centering
\includegraphics[width=0.4\linewidth]{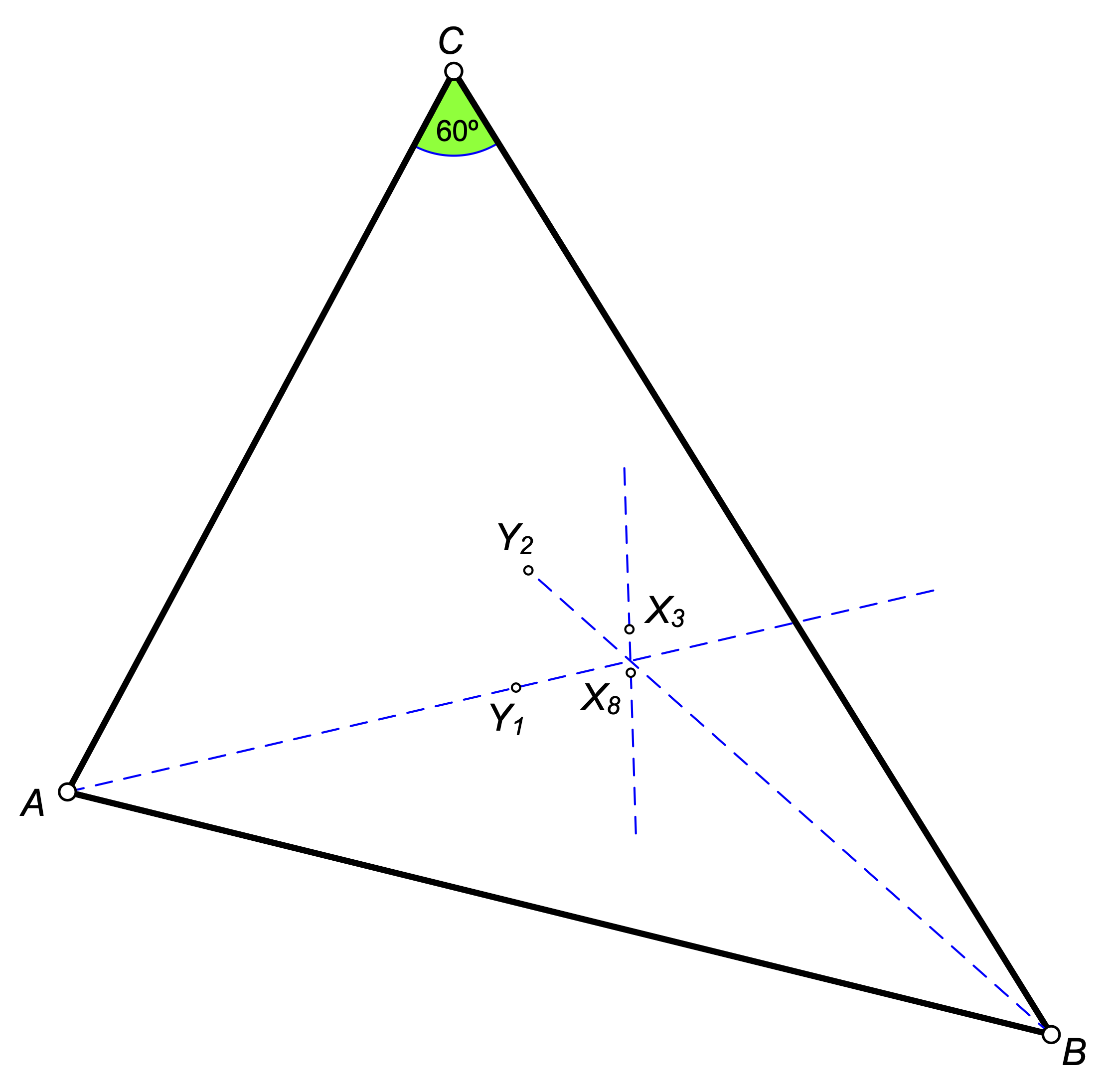}
\caption{dashed lines are concurrent}
\label{fig:60X8}
\end{figure}

\begin{proof}
As before, we find that the condition for concurrence is
$$(a - b) (a + b - c) (a^2 - a b + b^2 - c^2) (a + b - c - 2 u) u=0.$$
From the Law of Cosines, we see that in this triangle, $c^2=a^2  + b^2- a b $, so this condition holds.
\end{proof}


\section{Heptagonal Triangles}

A \emph{heptagonal triangle} is a triangle whose angles are $\pi/7$, $2\pi/7$, and $4\pi/7$.

\begin{lemma}
\label{lemma:heptagonal}
Let $ABC$ be a heptagonal triangle with $a>b>c$. Then
$$a:b:c=\sin\frac{4\pi}{7}:\sin\frac{2\pi}{7}:\sin\frac{\pi}{7}.$$
\end{lemma}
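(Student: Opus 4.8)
The plan is to derive the lemma from the Law of Sines together with a one-line monotonicity argument. First I would observe that the prescribed angles satisfy $\pi/7 + 2\pi/7 + 4\pi/7 = \pi$, so they are genuinely the angles of a triangle, and by the Law of Sines the side lengths of any triangle are proportional to the sines of the respectively opposite angles; thus the side opposite an angle $\theta$ has length proportional to $\sin\theta$.

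Next I would pin down the ordering of the three relevant sines. Since $\sin\frac{4\pi}{7} = \sin\bigl(\pi - \frac{4\pi}{7}\bigr) = \sin\frac{3\pi}{7}$, the three values in question are $\sin\frac{3\pi}{7}$, $\sin\frac{2\pi}{7}$, $\sin\frac{\pi}{7}$, whose arguments $\frac{3\pi}{7} > \frac{2\pi}{7} > \frac{\pi}{7}$ all lie in $(0,\pi/2)$, where $\sin$ is strictly increasing. Hence $\sin\frac{4\pi}{7} > \sin\frac{2\pi}{7} > \sin\frac{\pi}{7}$, and in particular these three numbers are pairwise distinct.

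Finally I would match the vertex labels to the angles. Because $a > b > c$ and each side length is proportional to the sine of the opposite angle, and because the three sines are strictly ordered, the correspondence is forced: $a$ must be opposite the angle $\frac{4\pi}{7}$, $b$ opposite $\frac{2\pi}{7}$, and $c$ opposite $\frac{\pi}{7}$. Combining this with the Law of Sines yields $a:b:c = \sin\frac{4\pi}{7}:\sin\frac{2\pi}{7}:\sin\frac{\pi}{7}$.

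There is no substantive obstacle here; the only step that needs any care is the monotonicity argument, since it is precisely what converts the hypothesis $a > b > c$ into the stated proportion rather than some permutation of it — without the strict ordering of the sines one could not rule out the wrong pairing of sides with angles.
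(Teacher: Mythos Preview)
Your argument is correct and follows the same route as the paper, which simply appeals to the Law of Sines. You supply the extra monotonicity step that justifies matching the hypothesis $a>b>c$ to the specific ordering of the sines, a detail the paper leaves implicit.
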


\begin{proof}
This follows immediately from the Law of Sines.
\end{proof}

\begin{lemma}[Reflection About BC]
\label{lemma:BCreflect}
Let $P$ have barycentric coordinates $(p:q:r)$ with respect to $\triangle ABC$.
Then $Q$, the reflection of $P$ about side $BC$, has coordinates
$$\left(a^2 p: (c^2 - b^2) p - a^2 (p + q): (b^2 - c^2) p - a^2 (p + r)\right).$$
\end{lemma}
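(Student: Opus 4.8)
The plan is to use the fact that reflection in a line is an affine isometry of the plane, together with the principle that an affine transformation acts on \emph{normalized} barycentric coordinates (those summing to $1$) as the linear map whose matrix has, as its three columns, the normalized coordinates of the images of $A$, $B$, and $C$. Write $\rho$ for the reflection about line $BC$. Because $B$ and $C$ lie on the mirror, $\rho(B)=B$ and $\rho(C)=C$, so the only genuine computation is to find $A':=\rho(A)$, the reflection of the apex $A$ in its opposite side.

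To get $A'$, I would first locate the foot $H_A$ of the altitude from $A$. From $BH_A=c\cos B$, $CH_A=b\cos C$, and $BH_A+CH_A=a$ one obtains $H_A=(0:b\cos C:c\cos B)=(0:S_C:S_B)$, where $S_B=\tfrac{1}{2}(a^2+c^2-b^2)$ and $S_C=\tfrac{1}{2}(a^2+b^2-c^2)$, so that $S_B+S_C=a^2$. Since $A'$ is the reflection of $A$ through $H_A$, in normalized coordinates $A'=2H_A-A$; clearing the denominator $a^2$ then gives
$$A'=\bigl(-a^2:\,a^2+b^2-c^2:\,a^2+c^2-b^2\bigr),$$
whose entries sum to $a^2$ — a useful consistency check, since a normalized triple must sum (after rescaling) to $1$.

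Finally I would apply $\rho$ to $P=(p:q:r)$. Normalizing $P$ to $\tfrac{1}{s}(p,q,r)$ with $s=p+q+r$, the point $\rho(P)$ has normalized coordinates $\tfrac{1}{s}\bigl(p\,A'+q\,B+r\,C\bigr)$, where $A'$ and $B=(0,1,0)$, $C=(0,0,1)$ now denote normalized triples. Multiplying out, clearing the denominator $a^2$, and dropping the overall factor $\tfrac{1}{s}$ leaves the homogeneous coordinates $\bigl(-a^2p:\,2S_Cp+a^2q:\,2S_Bp+a^2r\bigr)$ for $Q=\rho(P)$; rescaling by $-1$ and using $-2S_C=(c^2-b^2)-a^2$ and $-2S_B=(b^2-c^2)-a^2$ rewrites this as
$$\bigl(a^2p:\,(c^2-b^2)p-a^2(p+q):\,(b^2-c^2)p-a^2(p+r)\bigr),$$
which is exactly the asserted formula.

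I do not anticipate a serious obstacle: the computation is short and mechanical. The only two points that merit a word of care are (i) the principle that an affine map is linear on normalized barycentric coordinates — this is just the preservation of affine combinations $\sum\lambda_iP_i$ with $\sum\lambda_i=1$ — and (ii) the homogenization at the final step, together with the observation that although the derivation assumed $p+q+r\neq0$, the resulting equality of homogeneous triples is polynomial in $p$, $q$, $r$ and so persists for points at infinity by continuity. A route avoiding affine-map language entirely is to characterize $Q$ as the unique point for which the midpoint of $PQ$ lies on line $BC$ and line $PQ$ is parallel to the altitude from $A$ — whose point at infinity is $(-a^2:S_C:S_B)$ by Lemma~\ref{lemma:inf} — and then solve these two linear conditions for $Q$; this leads to the same formula.
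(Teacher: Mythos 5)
Your proof is correct, and it takes a route that differs from both of the paper's arguments. The paper's first proof characterizes $Q$ metrically, solving $PB=QB$ and $PC=QC$ with the barycentric distance formula and discarding the solution $Q=P$; its second proof writes down (somewhat ad hoc) the pedal point $X=(0:qS_B+(p+q)S_C:(p+r)S_B+rS_C)$ of $P$ on $BC$, checks via infinite points that $PX\perp BC$, and then forms $2X-a^2P$. You instead exploit that the reflection is an affine map fixing $B$ and $C$, so the whole lemma reduces to one concrete computation — reflecting the vertex $A$ through the altitude foot $(0:S_C:S_B)$ to get $A'=(-a^2:2S_C:2S_B)$ — followed by linearity on normalized coordinates. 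Your arithmetic checks out: $pA'+qB+rC$ homogenizes to $(-a^2p:2S_Cp+a^2q:2S_Bp+a^2r)$, and negating with $-2S_C=(c^2-b^2)-a^2$, $-2S_B=(b^2-c^2)-a^2$ gives exactly the stated triple. What your approach buys is economy and reusability (the same vertex-image matrix handles any affine map, and only one nontrivial point must be reflected); what the paper's second proof buys is a formula for the foot of the perpendicular from a general $P$, which is of independent use. Your closing caveats — that linearity on normalized coordinates is just preservation of affine combinations, and that the homogeneous identity extends to $p+q+r=0$ by polynomiality — are the right points to flag and are handled adequately.
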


\begin{proof}
The formula for the distance between two points in barycentric coordinates
can be found in \cite[\S7.1]{Yiu}. Let $Q=(p':q':r')$.
Solving the two equations $PB=QB$ and $PC=QC$ for $p'$, $q'$, and $r'$
and excluding the solution where $Q=P$ gives the required coordinates for $Q$.
\end{proof}

Another proof, not using the distance formula, is instructive.

\begin{proof}
It is immediate that infinite point of the line through the points $(p_1:q_1:r_1)$ and $(p_2:q_2:r_2)$ with 
the same sum $p_1+q_1+r_1=p_2+q_2+r_2$ is given by $(p_1-p_2:q_1-q_2:r_1-r_2)$. Thus, if $(p,q,r)$ is a given
point, we can consider the point $X=(0: q S_B + (p + q) S_C: (p + r) S_B + r S_C)$ with 
sum $(p+q+r)(S_B+S_C)=(p+q+r)a^2$. Then, by subtraction
\[\begin{aligned}
   & \left( {0:q{S_B} + (p + q){S_C}:r{S_C} + (p + r){S_B}} \right) - {a^2}\left( {p:q:r} \right) \\ 
   =  & \left( { - {a^2}p:q{S_B} + (p + q){S_C} - {a^2}q:r{S_C} + (p + r){S_B} - {a^2}r} \right) \\ 
   =  & \left( { - {a^2}p:p{S_C}:p{S_B}} \right),
\end{aligned} \]
that is, the infinite point of $PX$ is the infinite point of the $A-$altitude and 
then $PX$ is perpendicular to $BC$. 

Again, by subtraction,
\[\begin{aligned}
   & 2\left( {0:q{S_B} + (p + q){S_C}:r{S_C} + (p + r){S_B}} \right) - {a^2}\left( {p:q:r} \right) \\ 
   =  & \left( { - {a^2}p:q \cdot 2{S_B} + (p + q) \cdot 2{S_C} - {a^2}q:r \cdot 2{S_C} + (p + r) \cdot 2{S_B} - {a^2}r} \right) \\ 
   =  & \left( { - {a^2}p:2{S_C}p + {a^2}q:2{S_B}p + {a^2}r} \right), \\ 
\end{aligned} \] 
therefore the reflection of $P$ in $X$ is the point 
\[\left( { - {a^2}p:({b^2} - {c^2})p + (p + q){a^2}:({c^2} - {b^2})p + (p + r){a^2}} \right).\qedhere\]
\end{proof}

\begin{theorem}
\label{thm:HeptaY1reflect}
Let $ABC$ be a heptagonal triangle (named counterclockwise) with $AB<AC<BC$
(Figure~\ref{fig:HeptaY1reflect}).
Let $Y_1'$ be the reflection of $Y_1$ about side $BC$.
Then $X_1Y_2\parallel Y_1'C$.
\end{theorem}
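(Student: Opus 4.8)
The plan is to follow exactly the computational recipe established in the proof of Theorem~\ref{thm:X8X20}, adapted to the parallelism assertion. First I would set up barycentric coordinates with $\triangle ABC$ as reference. I would take the simple coordinates for $Y_1$ and $Y_2$ from display~(\ref{Y1Y2}), namely $Y_1=(u^2:(a-u)(b-u):u(b-u))$ and $Y_2=((a-u)(b-u):u^2:u(a-u))$, so as to avoid cube roots; here $u$ is left as an unevaluated symbol constrained by Equation~(\ref{eq:u}). The coordinates of $X_1=(a:b:c)$ are standard. Applying Lemma~\ref{lemma:BCreflect} to $Y_1$ gives the coordinates of $Y_1'$ as an explicit (if bulky) triple in $a,b,c,u$.

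Next I would form the two lines: $X_1Y_2$ as $\texttt{Cross}[X_1,Y_2]$ and $Y_1'C$ as $\texttt{Cross}[Y_1',(0:0:1)]$. To test parallelism I would compute the infinite point of each line via Lemma~\ref{lemma:inf} and check that the two infinite points are proportional; equivalently, one forms the $3\times 3$ determinant whose rows are the two line-triples and the line at infinity $(1:1:1)$ and sets it to zero. This produces a polynomial condition $P(a,b,c,u)=0$. As in the earlier proofs, I would then eliminate $u$ between $P=0$ and Equation~(\ref{eq:u}) using \texttt{Eliminate} (Gr\"obner bases / resultants), obtaining a polynomial $Q(a,b,c)=0$, and factor $Q$ with \texttt{Factor}. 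I expect $Q$ to split into a product of a monomial $a^i b^j c^k$ in the side lengths, some spurious linear/quadratic factors corresponding to degenerate triangles, and one genuine factor that vanishes precisely on heptagonal triangles. The final step is to verify, using Lemma~\ref{lemma:heptagonal}, that substituting $a:b:c=\sin\frac{4\pi}{7}:\sin\frac{2\pi}{7}:\sin\frac{\pi}{7}$ makes that factor vanish; concretely, one shows the relevant factor is a known minimal-type polynomial satisfied by these ratios (the heptagonal triangle's sides famously satisfy relations such as $a^2=b^2+bc$, equivalently $b^2=ac+c^2$ and $c^2=ab-b^2$, together with $a=b+c$ being false but $b+c>a$, etc.), so that the condition reduces to $0=0$.

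The main obstacle will be the elimination step and the identification of the heptagonal factor. Because $Y_1'$ already carries the reflection formula's size, the polynomial $P(a,b,c,u)$ will be of higher degree than the one in Theorem~\ref{thm:X8X20}, and eliminating $u$ against the cubic~(\ref{eq:u}) will yield a polynomial $Q$ of substantial degree; trusting \texttt{Factor} to extract a clean heptagonal factor is the delicate part, since an unfactored or mis-split result would leave the proof incomplete. A secondary subtlety is orientation: the theorem assumes $\triangle ABC$ is named counterclockwise with $AB<AC<BC$, i.e. $c<b<a$, so one must take the correct real root $u$ of~(\ref{eq:u}) and the correct labelling in Lemma~\ref{lemma:heptagonal}; choosing the wrong branch would make the parallelism fail numerically even though the symbolic identity holds. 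I would guard against this by first checking the whole computation numerically on the explicit heptagonal triangle (sides $\sin\frac{4\pi}{7},\sin\frac{2\pi}{7},\sin\frac{\pi}{7}$) to $10$ decimal places, exactly as the authors describe in the introduction, and only then running the exact symbolic elimination to promote the numerical evidence to a proof.
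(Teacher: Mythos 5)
Your proposal follows essentially the same route as the paper's proof: apply Lemma~\ref{lemma:BCreflect} to get $Y_1'$, compute the infinite points of $CY_1'$ and $X_1Y_2$ via Lemma~\ref{lemma:inf}, set their cross product to zero, eliminate $u$ against Equation~(\ref{eq:u}), and verify the resulting condition holds for the heptagonal side ratios of Lemma~\ref{lemma:heptagonal}. The only cosmetic difference is that the paper substitutes the sine values directly into the eliminated equation rather than first factoring out a ``heptagonal factor,'' so your plan is sound.
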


\begin{figure}[h!t]
\centering
\includegraphics[width=0.5\linewidth]{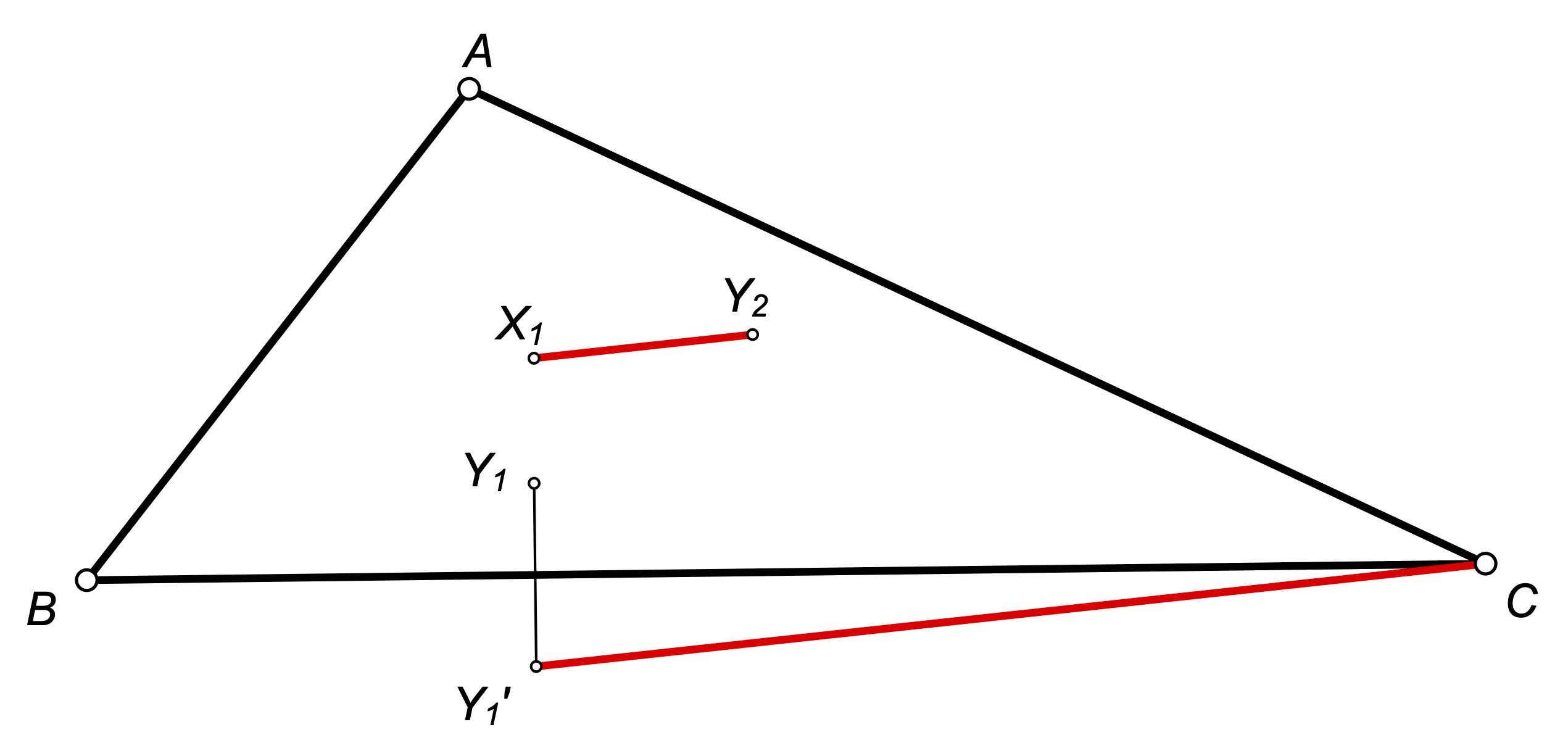}
\caption{red lines are parallel}
\label{fig:HeptaY1reflect}
\end{figure}

\begin{proof}
From Lemma~\ref{lemma:BCreflect}, we find that the coordinates of $Y_1'$ are
$$\left(a^2 u^2: a^2 (b - 2 u) u + (-b^2 + c^2) u^2 + a^3 (-b + u):u (-a^2 b + (b^2 - c^2) u)\right).$$
Using Lemma~\ref{lemma:inf}, we find that the infinite point on line $CY_1'$ is
$$i_1=\Bigl(a^2 u^2: a^2 (b - 2 u) u + (-b^2 + c^2) u^2 + a^3 (-b + u)$$
$$: a^3 (b - u) + (b^2 - c^2) u^2 + a^2 u (-b + u)\Bigr).$$
Similarly, the infinite point on the line $X_1Y_2$ is
$$i_2=\Bigl(a (b + c) (b - u) - a^2 u - (b + c) (b - u) u:u (b^2 + c u) + a (-b^2 + u^2)$$
$$: a^2 u + a (b - u) (-c + u) + u (b (c - u) - 2 c u)\Bigr)$$
The two lines will be parallel if these infinite points are the same.
The condition for $i_1$ and $i_2$ to be the same is \texttt{Cross}$[i_1,i_2]=0$.
Eliminating $u$ from this equation and Equation~(\ref{eq:u}) gives an equation
that is identically true when we let
$$a=\sin\frac{4\pi}{7},\, b =\sin\frac{2\pi}{7}, \,\mathrm{and}\,\, c=\sin\frac{\pi}{7}.$$
Thus, the lines are parallel.
\end{proof}

\begin{theorem}
\label{thm:HeptaX1}
Let $ABC$ be a heptagonal triangle (named counterclockwise) with $AB<AC<BC$
(Figure~\ref{fig:HeptaX1}).
Then $\angle CX_1Y_2+\angle X_1CY_1=\frac{\pi}{7}$.
\end{theorem}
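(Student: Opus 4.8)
The plan is to follow the computational template used in the proofs above, reducing the angle identity to a polynomial identity that a computer algebra system can certify. Set up a barycentric coordinate system on $\triangle ABC$, so that $X_1=(a:b:c)$, $C=(0:0:1)$, and $Y_1$, $Y_2$ are given by the simple coordinates in display~(\ref{Y1Y2}) in terms of the quantity $u$ of Theorem~\ref{thm:yff}. By Lemma~\ref{lemma:heptagonal}, a heptagonal triangle with $AB<AC<BC$ has $a:b:c=\sin\frac{4\pi}{7}:\sin\frac{2\pi}{7}:\sin\frac{\pi}{7}$, so $\angle ACB=\pi/7$ by the Law of Sines; the assertion to prove is thus $\angle CX_1Y_2+\angle X_1CY_1=\angle ACB$.

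To access the angles I would pass to displacement vectors: normalize each relevant point to have coordinate sum $1$, form the vectors $\overrightarrow{X_1C}$, $\overrightarrow{X_1Y_2}$ (at the vertex $X_1$ of $\triangle CX_1Y_2$) and $\overrightarrow{CX_1}$, $\overrightarrow{CY_1}$ (at the vertex $C$ of $\triangle X_1CY_1$), each a triple summing to $0$, and use the barycentric inner product $\langle(x_1,y_1,z_1),(x_2,y_2,z_2)\rangle=-\tfrac12\sum_{\mathrm{cyc}}a^2(y_1z_2+y_2z_1)$ together with the area form. For a triangle with vertices $P,Q,R$ one has $\cot(\angle P)=\dfrac{\overrightarrow{PQ}\cdot\overrightarrow{PR}}{2\,[\triangle PQR]}$, and $[\triangle PQR]/[ABC]$ is the rational function of the coordinates given by the $3\times3$ determinant over the product of the three coordinate sums. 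Since $[ABC]^2$ is itself a polynomial in $a^2,b^2,c^2$ by Heron's formula, each of $\cot\angle CX_1Y_2$ and $\cot\angle X_1CY_1$ comes out as a rational function of $a,b,c,u$ divided by $[ABC]$. Substituting both into the cotangent addition formula $\cot(\alpha+\beta)=\dfrac{\cot\alpha\cot\beta-1}{\cot\alpha+\cot\beta}$ produces $\cot(\angle CX_1Y_2+\angle X_1CY_1)$ as a rational function of $a,b,c,u$ carrying a single factor of $[ABC]$ in the denominator. On the other hand $\cot\angle ACB=\dfrac{a^2+b^2-c^2}{4\,[ABC]}$ by the Laws of Cosines and Sines, so after clearing the common factor $[ABC]$ the claimed identity becomes a polynomial identity in $a,b,c,u$.

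The last step is then exactly as in the proof of Theorem~\ref{thm:X8X20}: substitute $a=\sin\frac{4\pi}{7}$, $b=\sin\frac{2\pi}{7}$, $c=\sin\frac{\pi}{7}$, eliminate $u$ using Equation~(\ref{eq:u}) with \texttt{Eliminate} (or a Gröbner basis computation), and check that what remains collapses to $0=0$; because $\cot$ is injective on $(0,\pi)$, this forces the two angles to sum to exactly $\pi/7$. I expect the main obstacle to be bookkeeping rather than ideas: one must fix the normalizations and orientations so that the \emph{unsigned} angles really add to $\pi/7$, with no reflex correction or sign flip, and the safe way to pin this down is to evaluate every quantity numerically at the heptagonal triangle before running the symbolic proof; the final elimination is also sizeable, though routine for Mathematica. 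A tidier variant, should the cotangent route prove cumbersome, is to work instead with the tangent of the angle between the relevant lines, obtained from their line triples and Lemma~\ref{lemma:inf}; this leads to the same elimination and the same conclusion.
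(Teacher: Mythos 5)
Your plan is workable and would yield a valid proof, but it takes a genuinely different route from the paper's. The paper argues synthetically: it reflects $Y_1$ across $BC$ to get $Y_1'$, draws $CX_3$, notes that $\angle BCX_3=\angle X_1CB=\pi/14$ (the central angle over $BC$ is $6\pi/7$, and $CX_1$ bisects $\angle ACB=2\pi/7$), and invokes Theorem~\ref{thm:HeptaY1reflect} ($X_1Y_2\parallel CY_1'$) to transport the angle at $X_1$ to an equal angle at $C$; a short angle chase with the reflection then gives $\angle CX_1Y_2+\angle X_1CY_1=2\angle X_1CB=\pi/7$, so the only heavy computation in that route is the one already spent on the parallelism lemma. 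Your route is instead a direct coordinate computation of both cotangents via the barycentric inner product and signed areas, combined by the cotangent addition formula and an elimination of $u$. It is self-contained (no appeal to Theorem~\ref{thm:HeptaY1reflect}) and fits the paper's computational template, but it is heavier, and two caveats you only partly flag are genuinely load-bearing rather than cosmetic: the cotangent identity determines the sum only modulo $\pi$, so the numerical check at the (unique up to similarity) heptagonal triangle is an essential part of the proof, not a sanity check; and eliminating $u$ certifies only that \emph{some} root of the cubic satisfies your polynomial, so you must confirm that it is the geometric root $u\in(0,a)$ that does so (a caveat the paper's other computational proofs share). What the paper's route buys is an explanation of \emph{why} the sum equals $\pi/7$ --- it is $2\angle X_1CB$; what yours buys is independence from the parallelism theorem.
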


\begin{figure}[h!t]
\centering
\includegraphics[width=0.5\linewidth]{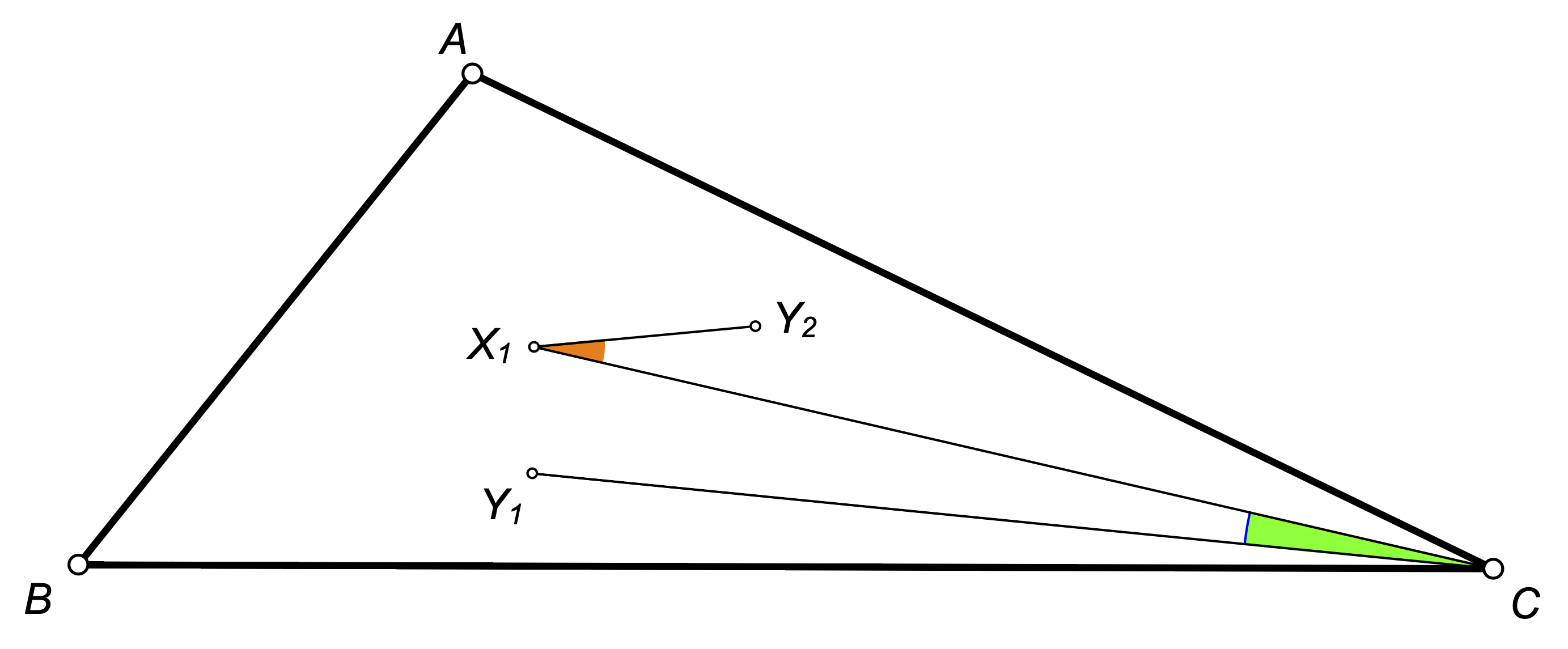}
\caption{}
\label{fig:HeptaX1}
\end{figure}


\begin{proof}
Let $Y_1'$ be the reflection of $Y_1$ about $BC$. Draw in line $CX_3$ as shown in Figure~\ref{fig:HeptaX1proof}.

\begin{figure}[h!t]
\centering
\includegraphics[width=0.5\linewidth]{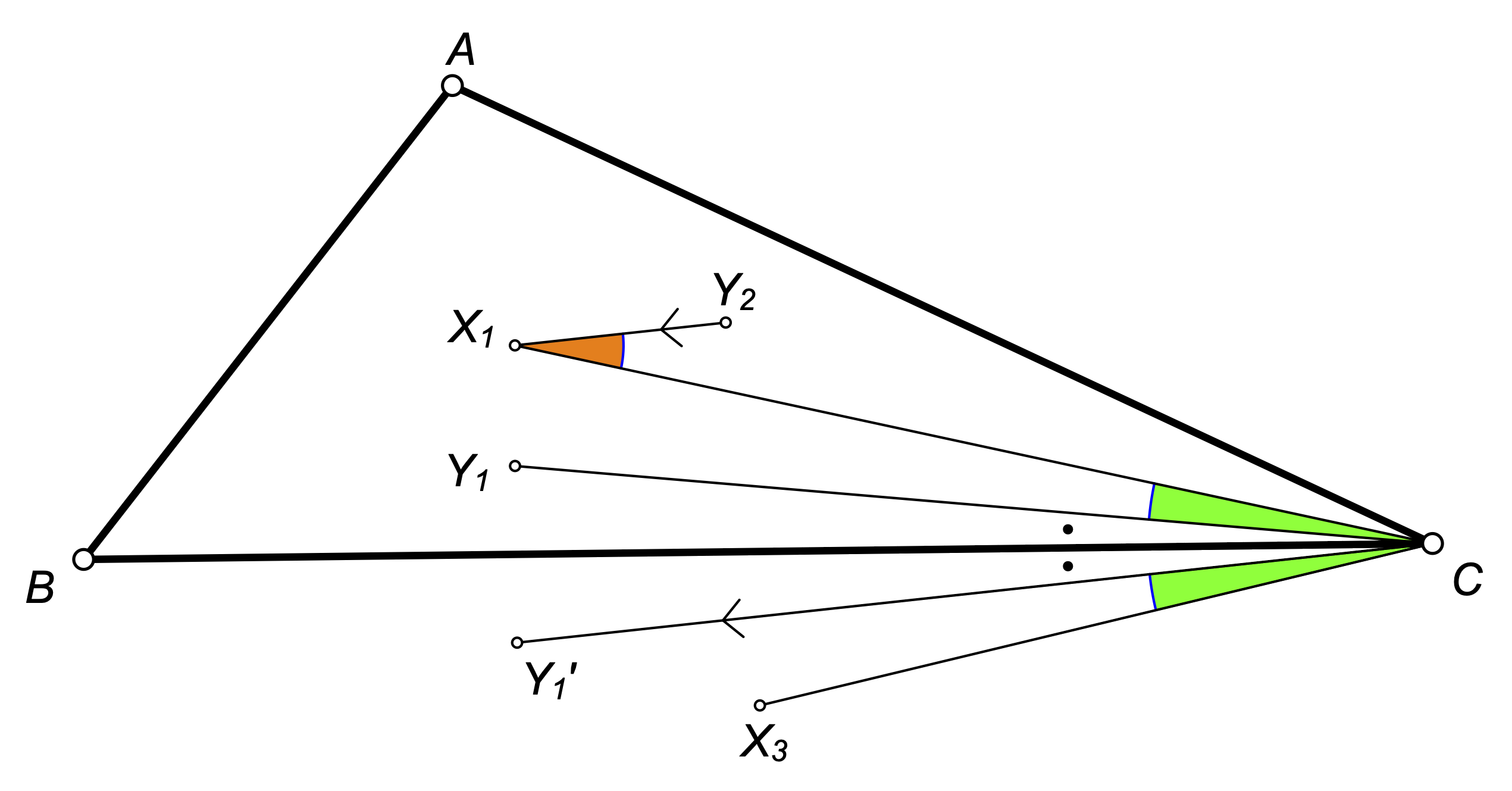}
\caption{}
\label{fig:HeptaX1proof}
\end{figure}

Since $X_3$ is the center of the circle through $A$, $B$, and $C$, $\angle CX_3B=6\pi/7$.
This implies $\angle BCX_3=\pi/14$. Since $X_1$ is the incenter, $CX_1$ bisects $\angle ACB$,
so $\angle X_1CB=\pi/14$. Therefore $\angle BCX_3=\angle X_1CB$.

Since $Y_1'$ is the reflection of $Y_1$ about side $BC$, $\angle Y_1CB=\angle BCY_1'$,
i.e. the dotted angles are equal (the ``black angles''). Subtracting, we find that $\angle X_1CY_1=\angle Y_1'CX_3$, i.e. the green
angles are equal.

From Theorem~\ref{thm:HeptaY1reflect}, we know that $Y_2X_1\parallel CY_1'$.
Therefore orange = green + 2 black.
Hence orange + green = 2 green + 2 black = 2(green + black) = $2\angle X_1CB=\angle ACB=\pi/7$.
\end{proof}

We found a number of results involving heptagonal triangles and center$X_1$, but they were all geometrically equivalent
to Theorem~\ref{thm:HeptaX1} because of the following result.

\newpage

\begin{theorem}
\label{thm:HeptaX1angs}
Let $ABC$ be a heptagonal triangle (named counterclockwise) with $AB<AC<BC$.
Let the angles be named as shown in Figure~\ref{fig:HeptaX1angs}.
If $\angle Y_2X_1A=\theta$, then

\begin{minipage}{2in}
$$
\begin{aligned}
\angle1&=\frac{2\pi}{7}\\
\angle2&=\frac{9\pi}{14}-\theta\\
\angle3&=\frac{\pi}{14}\\
\end{aligned}
$$
\end{minipage}
~
\begin{minipage}{1.5in}
$$
\begin{aligned}
\angle4&=\theta-\frac{\pi}{2}\\
\angle5&=\frac{11\pi}{14}\\
\angle6&=\frac{4\pi}{7}-\theta\\
\end{aligned}
$$
\end{minipage}
~
\begin{minipage}{2in}
$$
\begin{aligned}
\angle7&=\frac{\pi}{7}\\
\angle8&=\frac{\pi}{7}\\
\angle9&=\frac{2\pi}{7}
\end{aligned}
$$
\end{minipage}

\end{theorem}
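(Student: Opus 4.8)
The plan is to leverage Theorem~\ref{thm:HeptaX1} and the parallelism of Theorem~\ref{thm:HeptaY1reflect}, after which nothing beyond elementary angle chasing is needed. As the sentence preceding the statement already advertises, all nine angles are tied to the single quantity $\theta=\angle Y_2X_1A$ by linear relations, so the real content is (i) six $\theta$-free values and (ii) three linear relations connecting the remaining angles to $\theta$.

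First I would record the fixed data for the heptagonal triangle. By Lemma~\ref{lemma:heptagonal} (Law of Sines) the angles of $\triangle ABC$ are $\angle A=\tfrac{4\pi}{7}$, $\angle B=\tfrac{2\pi}{7}$, $\angle C=\tfrac{\pi}{7}$; since $X_1$ is the incenter, $AX_1,BX_1,CX_1$ bisect these, so $\angle X_1AB=\angle X_1AC=\tfrac{2\pi}{7}$, $\angle X_1BA=\angle X_1BC=\tfrac{\pi}{7}$, $\angle X_1CA=\angle X_1CB=\tfrac{\pi}{14}$, and hence $\angle AX_1C=\pi-\tfrac{2\pi}{7}-\tfrac{\pi}{14}=\tfrac{9\pi}{14}$. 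I would also carry over two facts from earlier proofs: from the proof of Theorem~\ref{thm:HeptaX1}, that the circumcenter line satisfies $\angle BCX_3=\tfrac{\pi}{14}$ and therefore $\angle X_1CY_1=\angle Y_1'CX_3$, where $Y_1'$ is the reflection of $Y_1$ in $BC$; and from Theorem~\ref{thm:HeptaY1reflect}, that $X_1Y_2\parallel CY_1'$. Reading these identities off against the labelling of Figure~\ref{fig:HeptaX1angs} pins down the six $\theta$-free entries $\angle1=\tfrac{2\pi}{7}$, $\angle3=\tfrac{\pi}{14}$, $\angle5=\tfrac{11\pi}{14}$, $\angle7=\angle8=\tfrac{\pi}{7}$, $\angle9=\tfrac{2\pi}{7}$ (for instance $\angle7+\angle8+\angle9=\tfrac{4\pi}{7}=\angle A$ is just the subdivision of $\angle A$ by the two cevians to $Y_1$ and $Y_2$, while $\angle5=\pi-\tfrac{\pi}{7}-\tfrac{\pi}{14}$ is an angle sum in the triangle cut off at $C$).

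For the three entries that involve $\theta$ I would locate, in Figure~\ref{fig:HeptaX1angs}, the triangle or straight angle in which each appears and invoke the angle sum. Since $Y_2$ lies inside $\angle AX_1C=\tfrac{9\pi}{14}$, one gets $\angle CX_1Y_2=\tfrac{9\pi}{14}-\theta$, i.e. $\angle2=\tfrac{9\pi}{14}-\theta$. Theorem~\ref{thm:HeptaX1} gives $\angle CX_1Y_2+\angle X_1CY_1=\tfrac{\pi}{7}$, so $\angle X_1CY_1=\tfrac{\pi}{7}-\bigl(\tfrac{9\pi}{14}-\theta\bigr)=\theta-\tfrac{\pi}{2}$, which is $\angle4$; note the $\tfrac{\pi}{2}$ here ultimately traces back to the perpendicularity $Y_1Y_2\perp X_1X_3$ of Theorem~\ref{thm:io}. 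Finally $\angle6$ sits in a triangle whose other two angles are $\theta$ and a fixed $\tfrac{3\pi}{7}=\pi-\angle A$, giving $\angle6=\pi-\theta-\tfrac{3\pi}{7}=\tfrac{4\pi}{7}-\theta$. Useful consistency checks along the way are $\angle1+\angle2+\angle3+\theta=\pi$ (a straight angle at $X_1$) and $\angle4+\angle5+\angle6=\tfrac{6\pi}{7}$.

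The hard part will not be any computation but the bookkeeping: correctly matching the nine labels of Figure~\ref{fig:HeptaX1angs} to geometric angles and tracking orientation, since the formulas are sensitive to which side of each cevian a point lies on — for example $\angle4=\theta-\tfrac{\pi}{2}$ is positive only because $\theta>\tfrac{\pi}{2}$, which itself must be justified (e.g. from the barycentric coordinates of $Y_2$). Because the heptagonal triangle is unique up to similarity, every such configuration question is decidable by a single numerical evaluation, which is legitimate here since only the manifestly linear dependence on $\theta$ is at issue. As a safeguard — and in the spirit of the rest of the paper — I would also run the direct symbolic check: set $a:b:c=\sin\tfrac{4\pi}{7}:\sin\tfrac{2\pi}{7}:\sin\tfrac{\pi}{7}$, take $X_1=(a:b:c)$ and $Y_1,Y_2$ from~(\ref{Y1Y2}) with $u$ the real root of~(\ref{eq:u}), form the relevant lines via \texttt{Cross}, compute each angle from the barycentric angle formula, eliminate $u$ using~(\ref{eq:u}), and verify all nine identities.
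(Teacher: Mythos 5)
Your proposal is correct and follows essentially the same route as the paper: the six fixed angles come from the incenter's bisector properties and the angle sums in $\triangle AX_1C$ and $\triangle BX_1C$, while $\angle 2$, $\angle 4$, $\angle 6$ are tied to $\theta$ via $\angle AX_1C=\tfrac{9\pi}{14}$, Theorem~\ref{thm:HeptaX1}, and the subdivision $\angle 4+\angle 6=\angle X_1CB=\tfrac{\pi}{14}$. The only discrepancies are in reading the figure (in the paper $\angle 7=\angle 8$ are the two halves of $\angle B$ and $\angle 1=\angle 9$ the halves of $\angle A$, rather than $\angle 7,\angle 8,\angle 9$ subdividing $\angle A$), which does not affect the mathematics.
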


\begin{figure}[h!t]
\centering
\includegraphics[width=0.7\linewidth]{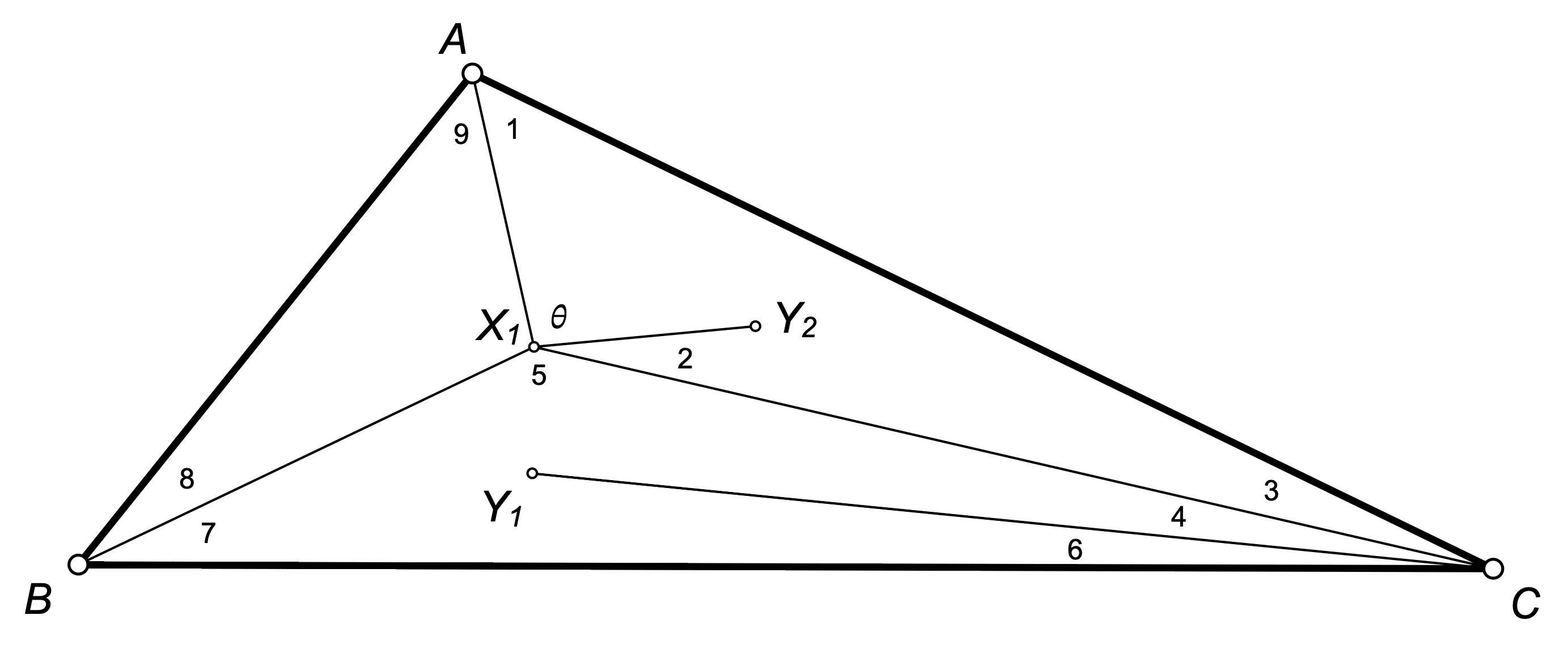}
\caption{}
\label{fig:HeptaX1angs}
\end{figure}

\begin{proof}
Since $\triangle ABC$ is a heptagonal triangle, $\angle CBA=\pi/7$, $\angle ACB=2\pi/7$, and $\angle BAC=4\pi/7$.
Since $X_1$ is the incenter of $\triangle ABC$, $AX_1$, $BX_1$, and $CX_1$ are angle bisectors.
Thus, $\angle  1=\angle 9=2\pi/7$, $\angle 3=\angle 4+\angle 6=\pi/14$, and $\angle 7=\angle 8=\pi/7$.
From $\triangle CX_1A$, we find $\angle 2=9\pi/14-\theta$.
From $\triangle BX_1C$, we find $\angle 5=11\pi/14$.
From Theorem~\ref{thm:HeptaX1}, $\angle 2+\angle 4=\pi/7$.
Therefore $\angle 4=\pi/7-(9\pi/14-\theta)=\theta-\pi/2$.
Finally, $\angle 6=\angle X_1CB-\angle 4=\pi/14-(\theta-\pi/2)=4\pi/7-\theta$.
\end{proof}


\begin{theorem}
Let $ABC$ be a heptagonal triangle (named counterclockwise) with $AB<AC<BC$
(Figure~\ref{fig:HeptaX1X3}).
Then $\displaystyle\angle CX_1Y_2=\angle Y_1CX_3$.
\end{theorem}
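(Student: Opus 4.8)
The plan is to derive this identity as an immediate consequence of Theorem~\ref{thm:HeptaX1angs}, which is stated just before it, rather than to set up fresh barycentric computations. Observe that the quantity $\angle CX_1Y_2$ in the statement is exactly the angle $\theta = \angle Y_2X_1A$'s supplement within $\triangle CX_1A$: more precisely, using the labelling of Figure~\ref{fig:HeptaX1angs}, $\angle CX_1Y_2$ should coincide with $\angle 2 = \tfrac{9\pi}{14}-\theta$ (the angle at $X_1$ inside $\triangle CX_1Y_2$ on the $C$ side), while $\angle Y_1CX_3$ should coincide with the ``green angle'' $\angle Y_1'CX_3 = \angle X_1CY_1$ from the proof of Theorem~\ref{thm:HeptaX1}. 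So the first step is to pin down, purely from the configuration in Figure~\ref{fig:HeptaX1angs}, which labelled angles $\angle CX_1Y_2$ and $\angle Y_1CX_3$ equal.

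First I would establish that $\angle Y_1CX_3 = \angle X_1CY_1$. This is precisely the ``green angles are equal'' step already carried out inside the proof of Theorem~\ref{thm:HeptaX1}: since $X_3$ is the circumcenter, $\angle BCX_3 = \pi/14 = \angle X_1CB$, and since $Y_1'$ is the reflection of $Y_1$ across $BC$ we have $\angle Y_1CB = \angle BCY_1'$; subtracting the equal ``black'' angles from the equal angles $\angle BCX_3 = \angle X_1CB$ gives $\angle X_1CY_1 = \angle Y_1'CX_3$. Then I need the additional observation that $C$, $Y_1$, and the reflected configuration place $Y_1$ and $X_3$ on opposite sides appropriately so that $\angle Y_1CX_3 = \angle X_1CY_1$ as unsigned angles — this is visible in Figure~\ref{fig:HeptaX1X3} but should be stated. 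Equivalently, one can read off $\angle X_1CY_1 = \angle 6 = \tfrac{4\pi}{7}-\theta$ from Theorem~\ref{thm:HeptaX1angs} once the orientation is fixed.

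Next I would compute $\angle CX_1Y_2$. From Theorem~\ref{thm:HeptaX1angs}, the angle at $X_1$ in $\triangle CX_1A$ is $\angle 2 = \tfrac{9\pi}{14}-\theta$, and this is the angle $\angle CX_1A$; the ray $X_1Y_2$ lies so that $\angle AX_1Y_2 = \theta$, hence $\angle CX_1Y_2 = \angle CX_1A - \angle AX_1Y_2$ would give $\tfrac{9\pi}{14}-2\theta$ — so I must be careful about whether $Y_2$ lies inside angle $\angle CX_1A$ or whether instead $\angle CX_1Y_2 = \angle CX_1A + \theta$ or some reflex adjustment applies; the correct reading from the figure is that $\angle CX_1Y_2 = \tfrac{4\pi}{7}-\theta$, matching $\angle 6$. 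Once both angles are expressed as $\tfrac{4\pi}{7}-\theta$, the identity $\angle CX_1Y_2 = \angle Y_1CX_3$ follows.

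The main obstacle I anticipate is purely bookkeeping of angle orientation and of which subangle of which triangle each named angle in the theorem refers to — there is no hard geometry left once Theorems~\ref{thm:HeptaX1} and~\ref{thm:HeptaX1angs} are in hand, but getting the signs and the ``inside vs.\ outside'' distinctions right against Figure~\ref{fig:HeptaX1X3} requires care. As a fallback, if the figure-reading turns out to be ambiguous, I would instead prove the result by the same barycentric/elimination method used throughout the paper: write down $(p:q:r)$ for $X_1$, $X_3$, $Y_1$, $Y_2$, form the two relevant lines via \texttt{Cross}, compute $\cos$ of each angle via the barycentric angle formula from \cite[\S7.1]{Yiu}, set the two expressions equal, eliminate $u$ using Equation~(\ref{eq:u}), and verify the resulting polynomial identity vanishes under $a=\sin\tfrac{4\pi}{7}$, $b=\sin\tfrac{2\pi}{7}$, $c=\sin\tfrac{\pi}{7}$ — exactly as in the proof of Theorem~\ref{thm:HeptaY1reflect}.
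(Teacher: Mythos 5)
Your overall strategy --- read both angles off the angle bookkeeping of Theorem~\ref{thm:HeptaX1angs} --- is exactly the paper's approach, but both of your key identifications are wrong, and the value you end up assigning to each angle is incorrect, so the argument as written does not establish the theorem. First, $\angle Y_1CX_3$ is \emph{not} equal to $\angle X_1CY_1$. The ``green angles'' step from the proof of Theorem~\ref{thm:HeptaX1} gives $\angle X_1CY_1=\angle Y_1'CX_3$, where $Y_1'$ is the \emph{reflection} of $Y_1$ about $BC$; you have silently replaced $Y_1'$ by $Y_1$. These are genuinely different angles: $X_3$ lies on the opposite side of $BC$ from $Y_1$ (the triangle is obtuse at $A$), so the correct decomposition is $\angle Y_1CX_3=\angle Y_1CB+\angle BCX_3=\angle 6+\frac{\pi}{14}=\frac{9\pi}{14}-\theta$, whereas $\angle X_1CY_1=\angle 4=\theta-\frac{\pi}{2}$ (not $\angle 6$, as you assert). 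These two quantities are unequal unless $\theta=\frac{4\pi}{7}$, which would force $\angle 6=0$, i.e.\ $Y_1$ on line $BC$ --- false.

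Second, your evaluation of $\angle CX_1Y_2$ is also off. In Theorem~\ref{thm:HeptaX1angs}, $\angle 2$ \emph{is} the angle $\angle CX_1Y_2$ itself (that is how the relation $\angle 2+\angle 4=\frac{\pi}{7}$ is imported from Theorem~\ref{thm:HeptaX1}); it is not the full angle $\angle CX_1A$, which equals $\frac{9\pi}{14}$ with no $\theta$ in it. So $\angle CX_1Y_2=\angle 2=\frac{9\pi}{14}-\theta$, not $\frac{4\pi}{7}-\theta$ as you ``read from the figure.'' The two errors conveniently give you matching (but wrong) values $\frac{4\pi}{7}-\theta$ on both sides; the actual common value is $\frac{9\pi}{14}-\theta$, obtained as in the paper by $\angle Y_1CX_3=\angle BCX_3+\angle 6=\frac{\pi}{14}+\frac{4\pi}{7}-\theta=\frac{9\pi}{14}-\theta=\angle 2=\angle CX_1Y_2$. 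Your barycentric fallback would work in principle but is only sketched, so it cannot rescue the main argument.
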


\begin{figure}[h!t]
\centering
\includegraphics[width=0.6\linewidth]{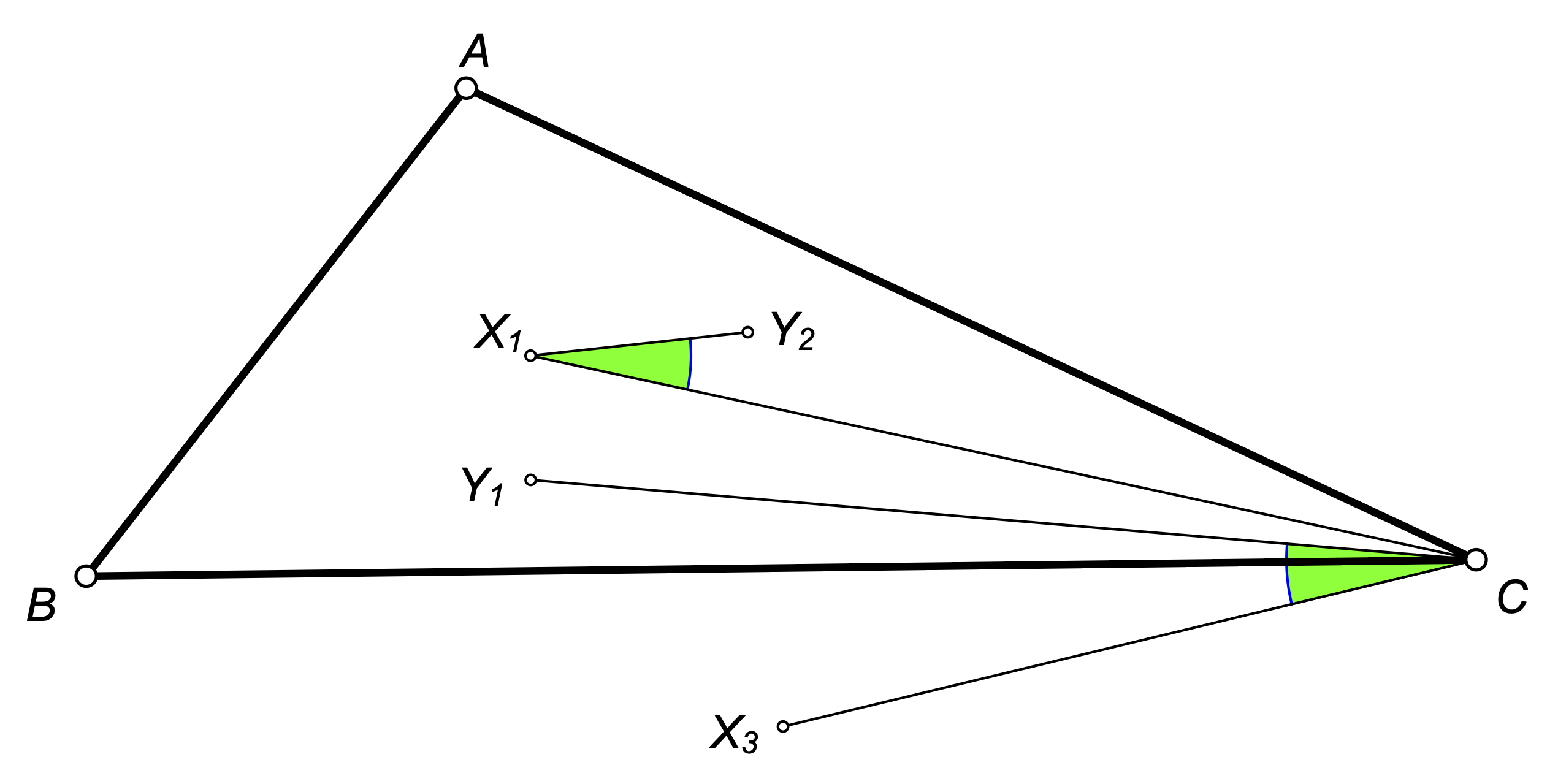}
\caption{green angles are equal}
\label{fig:HeptaX1X3}
\end{figure}

\begin{proof}
Since $X_3$ is the center of the circle through $A$, $B$, and $C$, $\angle CX_3B=6\pi/7$.
This implies $\angle BCX_3=\pi/14$. Then, using Theorem~\ref{thm:HeptaX1angs}, we have
$\angle Y_1CX_3=\angle BCX_3+\angle 6=\pi/14+4\pi/7-\theta=9\pi/14-\theta=\angle 2$.
\end{proof}

\newpage

We also found the following three results, involving other centers.
The proofs, using Mathematica, involved a lot of computation, so we have omitted the details.

\begin{theorem}
Let $ABC$ be a heptagonal triangle (named counterclockwise) with $AB<AC<BC$.
Let $P$ be the foot of the perpendicular from $Y_1$ to $AC$ (Figure~\ref{fig:HeptaX5}).
Then $\angle CBX_5=\angle Y_2Y_1P$.
\end{theorem}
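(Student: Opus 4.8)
The plan is to follow the same computational template used throughout this paper, since the statement asserts an equality of two angles in a specific (heptagonal) triangle, and such angle equalities reduce to polynomial identities once everything is expressed in barycentric coordinates. First I would record the barycentric coordinates of all the objects in the figure: $B=(0:1:0)$, $C=(0:0:1)$; the center $X_5$ (the nine-point center) from \cite{ETC}; the Yff points $Y_1$ and $Y_2$ in the simple coordinates of display (\ref{Y1Y2}); and the foot $P$ of the perpendicular from $Y_1$ to line $AC$. The point $P$ is obtained by the standard construction: line $AC$ is represented by \texttt{Cross}$[A,C]=(0:1:0)$ is wrong — rather $AC=(0{:}1{:}0)$ is the line $y=0$, so one intersects it with the line through $Y_1$ perpendicular to $AC$; the perpendicular direction is handled via the displacement/Conway-symbol machinery already invoked in Lemma~\ref{lemma:BCreflect}. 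With $P$ in hand, both angles $\angle CBX_5$ and $\angle Y_2Y_1P$ are angles between two lines through a common vertex ($B$ and $Y_1$ respectively), so each has a cotangent (equivalently, a value of $\tan$) expressible as a rational function of $a,b,c$ and $u$ via the barycentric angle formula (e.g. the ``$\cot$'' form in \cite[\S4.4]{Yiu}, or by extracting the two infinite points with Lemma~\ref{lemma:inf} and using the inner-product/area pairing).

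Next I would form the single polynomial equation asserting $\tan\angle CBX_5 = \tan\angle Y_2Y_1P$ (clearing denominators), obtaining a polynomial relation $F(a,b,c,u)=0$. Since $u$ is only implicitly defined, I then eliminate $u$ between $F$ and Equation~(\ref{eq:u}) using \texttt{Eliminate} (Gröbner-basis elimination), yielding a polynomial $G(a,b,c)=0$ that must hold whenever $\triangle ABC$ is heptagonal. Finally I substitute the heptagonal side ratios from Lemma~\ref{lemma:heptagonal}, namely $a=\sin\frac{4\pi}{7}$, $b=\sin\frac{2\pi}{7}$, $c=\sin\frac{\pi}{7}$, and verify that $G$ vanishes identically. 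Equivalently, and more cleanly, one checks that the minimal polynomial of the ratio $c:b:a$ (the heptagonal triangle's defining cubic, $t^3-2t^2-t+1=0$ for $2\cos\frac{2\pi}{7}$ and companions) divides the relevant factor of $G$; this avoids trigonometric evaluation and makes the verification exact.

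The main obstacle I anticipate is not conceptual but the sheer size of the intermediate expressions: the elimination of $u$ against a cubic, applied to a polynomial $F$ already of high degree in $a,b,c$ (the analogous computations earlier in the paper produced degree-$17$ polynomials), will produce an unwieldy $G$, and one should expect to need \texttt{Factor} to isolate the heptagonal factor from spurious factors such as $a-b$, $a+b-c$, $u$, or degenerate-triangle conditions — exactly the pattern seen in the proofs of Theorem~\ref{thm:X8X20} and the $30$-$60$-$90$ theorem. A secondary subtlety is the correct orientation/branch handling: because $\triangle ABC$ is named counterclockwise and $AB<AC<BC$, one must make sure the directed angles are taken consistently so that the identity is the equality of angles (not of their supplements), which is why stating the result via $\tan$ of directed angles, rather than via $\cos$, is the safer route.
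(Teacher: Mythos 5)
The paper does not actually contain a proof of this theorem: the authors state explicitly that the proofs of this result and the two that follow ``involved a lot of computation, so we have omitted the details.'' So there is no argument of record to compare yours against; what can be said is that your plan reproduces the paper's standard template (barycentric coordinates, the simple coordinates~(\ref{Y1Y2}) for $Y_1,Y_2$, a polynomial condition $F(a,b,c,u)=0$, elimination of $u$ against Equation~(\ref{eq:u}), then verification on the heptagonal side ratios of Lemma~\ref{lemma:heptagonal}), and that template is exactly what the authors used elsewhere, e.g.\ in Theorem~\ref{thm:HeptaY1reflect}. Your suggestion to replace the trigonometric substitution by divisibility against the minimal polynomial of $2\cos\frac{2\pi}{7}$ is a genuine improvement in rigor over the paper's ``substitute and check'' phrasing. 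Two remarks. First, you do not actually need the point $P$: since $Y_1P\perp AC$ by definition, the infinite point of line $Y_1P$ is just the infinite point of the perpendiculars to $AC$ (computable from the Conway symbols exactly as in the second proof of Lemma~\ref{lemma:BCreflect}), which shortcuts the messiest step of your construction. Second, be aware that your proposal is a plan rather than a proof --- the polynomial $F$, its elimination ideal, and the vanishing of the heptagonal factor are all asserted but not exhibited, and the one subtlety you flag (directed angles versus their supplements, i.e.\ that $\tan$ only determines the angle modulo $\pi$) genuinely requires a final numerical or sign check to confirm that the two angles are equal and not supplementary. Until the computation is run and that check is made, the argument is a credible roadmap, not a completed proof.
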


\begin{figure}[h!t]
\centering
\includegraphics[width=0.5\linewidth]{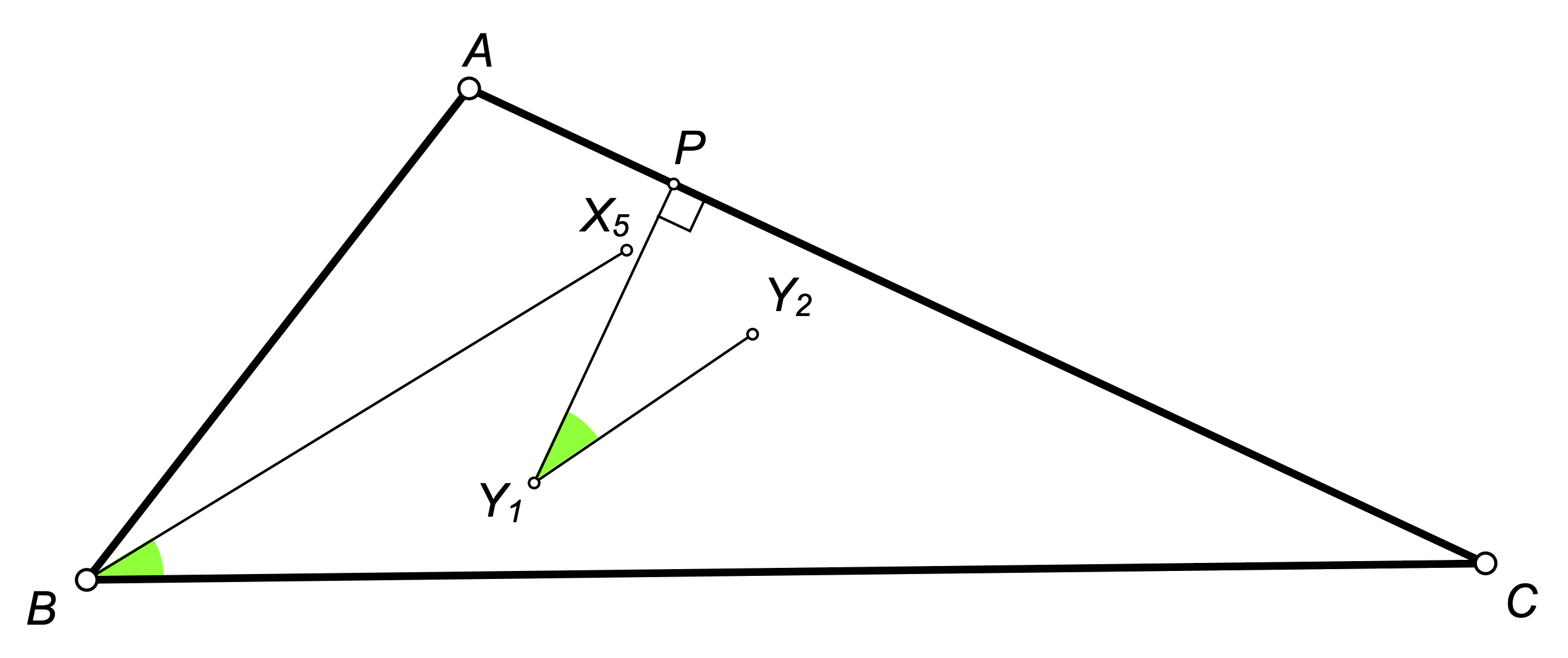}
\caption{green angles are equal}
\label{fig:HeptaX5}
\end{figure}

\void{
\begin{proof}
\redtext{to be supplied}
\end{proof}
}

\begin{theorem}
Let $ABC$ be a heptagonal triangle (named counterclockwise) with $AB<AC<BC$
(Figure~\ref{fig:HeptaX1X21}).
Then $\angle X_{21}X_1Y_2=\angle CY_1Y_2$.
\end{theorem}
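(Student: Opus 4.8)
The plan is to follow the same barycentric/elimination strategy used for Theorems~\ref{thm:X8X20} and~\ref{thm:HeptaY1reflect}, adapted to an equality of angles. First I would take $\triangle ABC$ as reference triangle, $A=(1:0:0)$, $B=(0:1:0)$, $C=(0:0:1)$, and collect barycentric coordinates for the five relevant points: $C=(0:0:1)$, the incenter $X_1=(a:b:c)$, the Schiffler point $X_{21}$ (read off from \cite{ETC}), and $Y_1$, $Y_2$ in the cube-root-free ``simple'' form of display~(\ref{Y1Y2}), keeping $u$ as an indeterminate subject to Equation~(\ref{eq:u}).

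Next I would turn the angle condition into a polynomial one. The angle $\angle X_{21}X_1Y_2$ is the angle at $X_1$ between the directions $X_1\!\to\!X_{21}$ and $X_1\!\to\!Y_2$; I would represent each direction as a sum-zero triple (if $P=(p_1:p_2:p_3)$ and $Q=(q_1:q_2:q_3)$ then the direction $P\!\to\!Q$ is $\sigma_P\,Q-\sigma_Q\,P$, where $\sigma$ denotes the coordinate sum, in the spirit of Lemma~\ref{lemma:inf}). The barycentric metric of \cite[\S7.1]{Yiu} supplies a symmetric bilinear form $\langle\cdot,\cdot\rangle$ on such triples for which $\langle\mathbf v,\mathbf v\rangle$ is the squared length of the displacement $\mathbf v$, so that $\cos^2\angle PQR=\langle\vec{QP},\vec{QR}\rangle^2/\bigl(\langle\vec{QP},\vec{QP}\rangle\,\langle\vec{QR},\vec{QR}\rangle\bigr)$ is a rational function of $a,b,c,u$ with no surviving square roots. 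Computing $\cos^2(\angle X_{21}X_1Y_2)$ and $\cos^2(\angle CY_1Y_2)$ (the latter from the directions $Y_1\!\to\!C$ and $Y_1\!\to\!Y_2$) and cross-multiplying reduces the theorem to a single polynomial identity $P(a,b,c,u)=0$. Working with $\cos^2$ proves the angles equal or supplementary; one numerical instance of a heptagonal triangle settles which, so no generality is lost (alternatively one can carry a signed tangent, using Heron's formula to keep $S^2$ polynomial, but the sign still needs the same numerical check).

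Finally I would eliminate $u$ between $P(a,b,c,u)=0$ and Equation~(\ref{eq:u}) by Gröbner bases — Mathematica's \texttt{Eliminate}, exactly as in the proof of Theorem~\ref{thm:X8X20} — factor the resulting polynomial in $a,b,c$, and verify that substituting $a=\sin\frac{4\pi}{7}$, $b=\sin\frac{2\pi}{7}$, $c=\sin\frac{\pi}{7}$ (Lemma~\ref{lemma:heptagonal}, via the minimal polynomial these quantities satisfy) annihilates it; equivalently, that the heptagonal side ratios lie on the relevant irreducible factor, just as the factor $a^2-b^2+c^2$ did the job for right triangles in Theorem~\ref{thm:X8X20}.

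The main obstacle is sheer size: $X_{21}$ already has coordinates of degree six in $a,b,c$, the inner products add several more degrees, and clearing the $\cos^2$ denominators and then reducing modulo the cubic~(\ref{eq:u}) produces a polynomial in $a,b,c$ of high degree whose factorization — and whose vanishing at the heptagonal point — is the delicate part; as in Theorem~\ref{thm:X8X20} one must also discard spurious factors (such as $u=0$, $a=b$, or $a=b+c$) thrown up by the elimination, and confirm that it is genuinely a heptagonal-specific factor, not a degenerate one, that forces the identity. An attractive alternative worth pursuing is a synthetic derivation from the angle bookkeeping of Theorem~\ref{thm:HeptaX1angs}: expressing both $\angle X_{21}X_1Y_2$ and $\angle CY_1Y_2$ in terms of the parameter $\theta=\angle Y_2X_1A$, the known subdivisions of $\angle ACB$, and the position of $X_{21}$ relative to line $X_1X_3$, one might hope to recover the equality with no computation at all — paralleling the way the statement $\angle CX_1Y_2=\angle Y_1CX_3$ dropped out of Theorem~\ref{thm:HeptaX1angs}.
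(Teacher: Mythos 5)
The paper gives no proof of this theorem: it explicitly states that the Mathematica computations for this and the two neighboring results ``involved a lot of computation, so we have omitted the details.'' Your proposal lays out essentially the methodology the authors use everywhere else in the paper --- barycentric coordinates with $u$ kept as an indeterminate subject to Equation~(\ref{eq:u}), a polynomial reformulation of the geometric condition, elimination of $u$ via Gr\"obner bases, and verification at the heptagonal side ratios --- and your handling of the angle condition via $\cos^2$ with a one-instance numerical sign check is a sensible way to avoid the square roots that lengths introduce. The only caveat is that what you have written is a plan rather than an executed proof: the decisive steps (the actual elimination, the factorization, and the check that the heptagonal values annihilate a non-degenerate factor) are exactly where such arguments can fail or surprise, and until they are carried out the theorem is not established. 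Your closing suggestion of a synthetic route through the angle bookkeeping of Theorem~\ref{thm:HeptaX1angs} is attractive and is precisely the kind of argument the authors request in their open question following these three theorems, but it too remains a sketch.
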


\begin{figure}[h!t]
\centering
\includegraphics[width=0.6\linewidth]{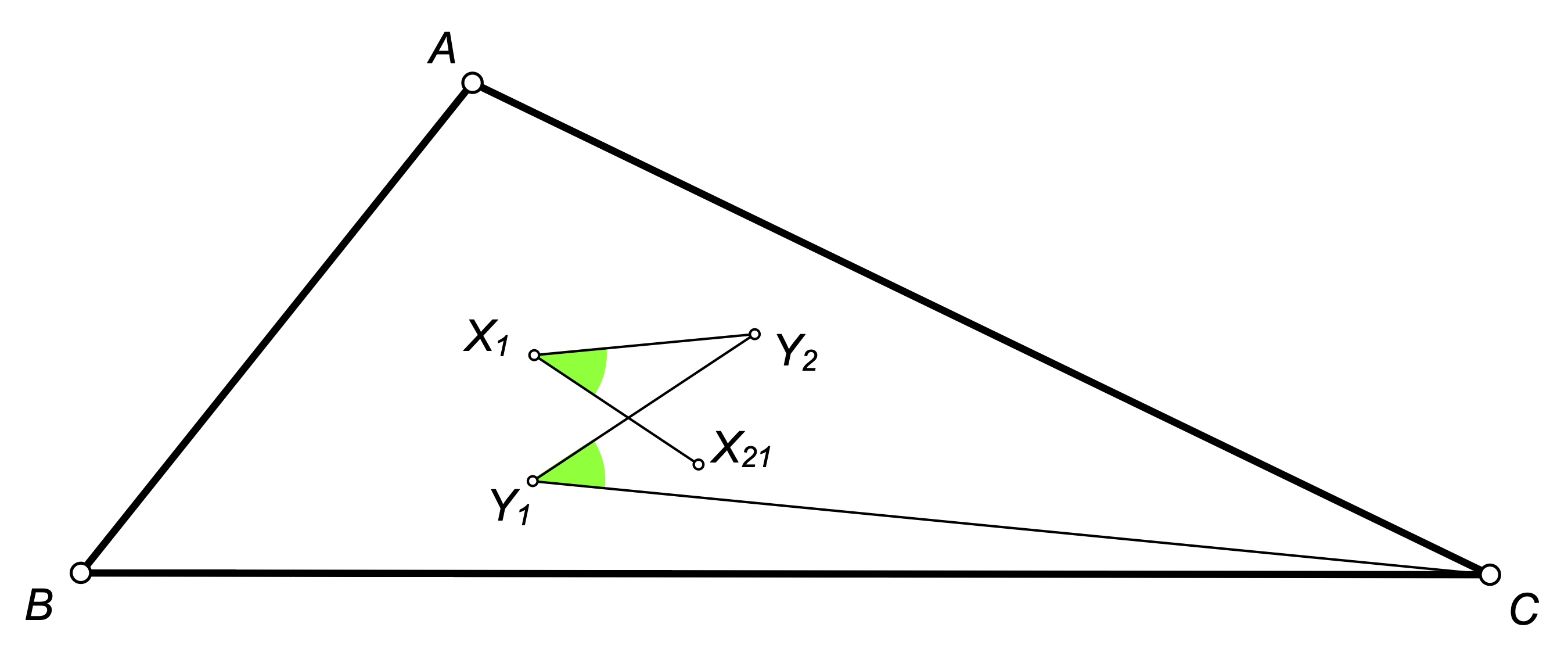}
\caption{green angles are equal}
\label{fig:HeptaX1X21}
\end{figure}

\void{
\begin{proof}
\redtext{to be supplied}
\end{proof}
}

\begin{theorem}
Let $ABC$ be a heptagonal triangle (named counterclockwise) with $AB<AC<BC$
(Figure~\ref{fig:HeptaX1X28}).
Then $\angle X_1Y_2Y_1=\angle X_{28}CY_1$.
\end{theorem}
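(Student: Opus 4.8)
The plan is to reproduce the barycentric computer-algebra method used to prove Theorem~\ref{thm:HeptaY1reflect}. Work in barycentric coordinates with reference triangle $ABC$, so $A=(1:0:0)$, $B=(0:1:0)$, $C=(0:0:1)$, and record $X_1=(a:b:c)$, the coordinates of $X_{28}$ from \cite{ETC}, and $Y_1$, $Y_2$ in the polynomial form~(\ref{Y1Y2}); keep $u$ as an indeterminate constrained only by Equation~(\ref{eq:u}), so that every coordinate is a polynomial in $a,b,c,u$.

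The substantive step is to convert the angle equality into a polynomial identity. Both $\angle X_1Y_2Y_1$ and $\angle X_{28}CY_1$ are ordinary (undirected) angles of a non-degenerate configuration, hence lie in $(0,\pi)$, so it suffices to show that their cosines agree. For a vertex $V$ and two rays $VP$, $VQ$, normalize $P$, $Q$, $V$ to coordinate sum $1$ and set $\mathbf d_1=P-V$, $\mathbf d_2=Q-V$; these have zero coordinate sum, and $\cos\angle PVQ=\langle\mathbf d_1,\mathbf d_2\rangle/\sqrt{\langle\mathbf d_1,\mathbf d_1\rangle\,\langle\mathbf d_2,\mathbf d_2\rangle}$, where $\langle(x_1,y_1,z_1),(x_2,y_2,z_2)\rangle=S_Ax_1x_2+S_By_1y_2+S_Cz_1z_2$ is the bilinear form attached to $ABC$ (the same one underlying the perpendicularity criterion in \cite[\S4.5]{Yiu}), with $S_A,S_B,S_C$ the usual Conway symbols. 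Applying this at $V=Y_2$ with rays to $X_1$ and $Y_1$, and at $V=C$ with rays to $X_{28}$ and $Y_1$, then clearing the square roots by cross-multiplying and squaring, yields, after clearing denominators, a single polynomial equation $P(a,b,c,u)=0$ equivalent to $\cos^2\angle X_1Y_2Y_1=\cos^2\angle X_{28}CY_1$.

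The rest is mechanical, exactly as in the earlier proofs. Eliminate $u$ between $P(a,b,c,u)=0$ and Equation~(\ref{eq:u}) using \texttt{Eliminate} (a Gr\"obner-basis computation), obtaining a homogeneous polynomial $\Phi(a,b,c)$; \texttt{Factor} it; and isolate the factor vanishing on the heptagonal locus. Recall that for $a>b>c$ the heptagonal triangle satisfies, among other relations, $a^2=b^2+bc$ and $b^2=c^2+ca$, all consequences of $a:b:c=\sin\frac{4\pi}{7}:\sin\frac{2\pi}{7}:\sin\frac{\pi}{7}$ together with the minimal polynomial $t^3+t^2-2t-1=0$ of $2\cos\frac{2\pi}{7}$; substituting $a=\sin\frac{4\pi}{7}$, $b=\sin\frac{2\pi}{7}$, $c=\sin\frac{\pi}{7}$ lets Mathematica collapse $\Phi$ to $0$ outright. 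Because passing to squared cosines only shows $\angle X_{28}CY_1\in\{\angle X_1Y_2Y_1,\,\pi-\angle X_1Y_2Y_1\}$, one closes the argument by checking numerically, on a single heptagonal triangle, that the two angles themselves (not an angle and its supplement) coincide --- which is precisely the observation that suggested the theorem.

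The main obstacle is sheer size: the cubic dependence on $u$ inherited from $Y_1$, $Y_2$, combined with the polynomial coordinates of $X_{28}$, makes $P$ large, so eliminating $u$ and then factoring the resulting high-degree polynomial in three variables is heavy --- which is exactly why this proof was left out of the paper's first pass. The only genuinely delicate point is the branch bookkeeping just noted, and it is disposed of numerically. A synthetic proof would be far more satisfying, presumably by pushing the angle chase of Theorem~\ref{thm:HeptaX1angs} --- which already pins down every angle at $A$, $B$, $C$, and $X_1$ in terms of the single parameter $\theta=\angle Y_2X_1A$ --- far enough to locate the rays $Y_2Y_1$ and $CX_{28}$; I do not see how to carry this out, and leave it open.
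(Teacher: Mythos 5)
Your proposal matches the approach the paper itself took for this result (it states only that the proof ``using Mathematica, involved a lot of computation'' and omits the details): barycentric coordinates, the Conway-form cosine criterion for the two angles, elimination of $u$ via Equation~(\ref{eq:u}), and verification on the heptagonal side ratios. The one delicate point you flag --- that squaring the cosines only determines the angle up to a supplement --- is genuinely necessary and is legitimately closed by a single numerical check, since the heptagonal triangle is unique up to similarity and angles are similarity invariants.
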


\begin{figure}[h!t]
\centering
\includegraphics[width=0.6\linewidth]{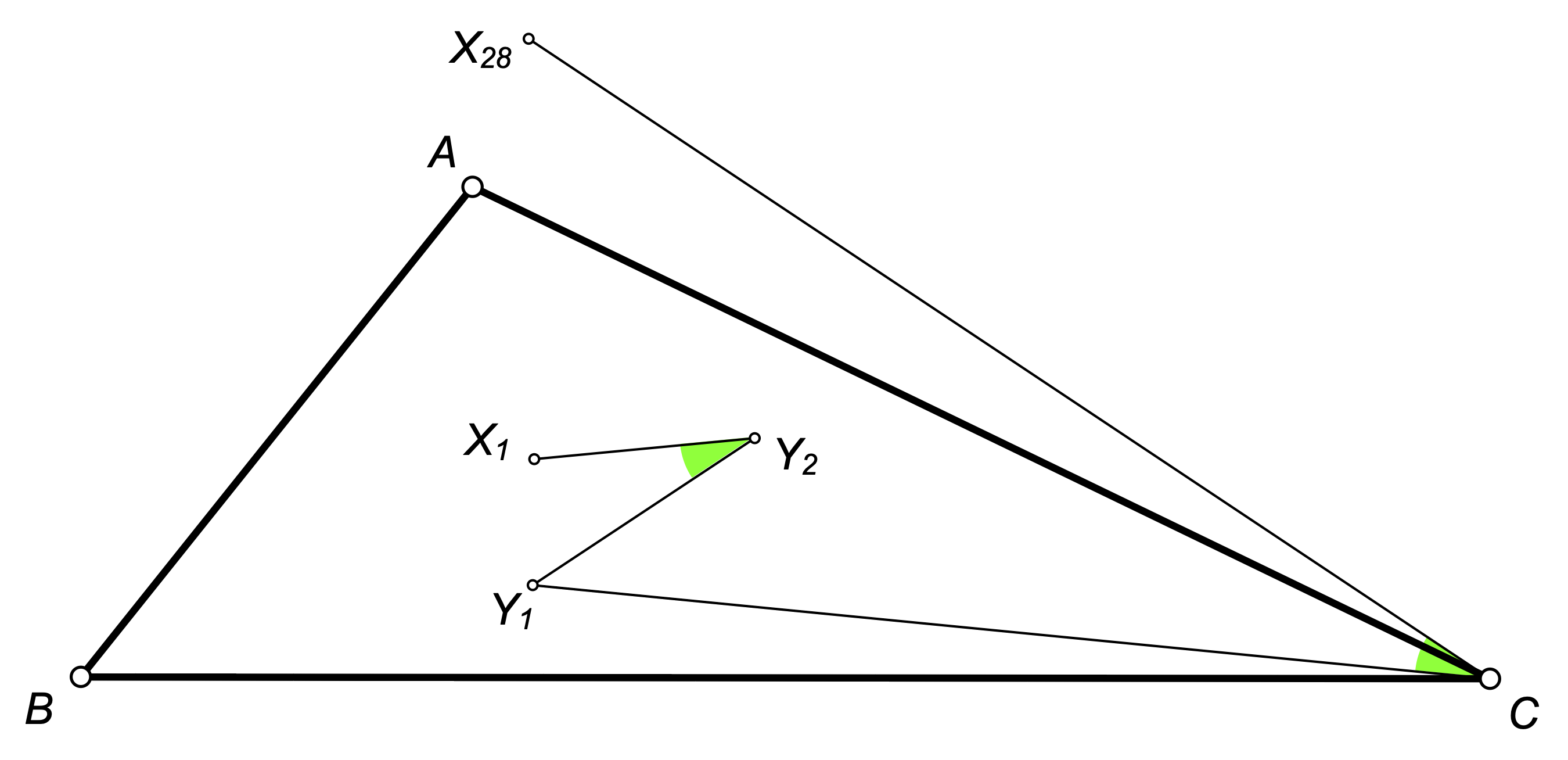}
\caption{green angles are equal}
\label{fig:HeptaX1X28}
\end{figure}

\void{
\begin{proof}
\redtext{to be supplied}
\end{proof}
}

\begin{open}
Is there a purely synthetic geometric proof of any of the three preceding theorems, or a proof
that does not involve massive amounts of computation?
\end{open}

\newpage

\section{Harmonic Triangles}

\begin{theorem}
\label{thm:medial}
Let $ABC$ be a triangle such that $Y_1Y_2$ passes through a vertex.
Then $Y_1Y_2$ is a median of the triangle.
\end{theorem}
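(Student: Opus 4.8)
The plan is to argue entirely in barycentric coordinates, using the simple coordinates for $Y_1$ and $Y_2$ recorded in display (\ref{Y1Y2}) together with the fact (noted in the paper) that $Y_1$ and $Y_2$ are isotomic conjugates. Two preliminary remarks clear the ground. First, the equilateral case must be set aside: there $Y_1=Y_2$ (both equal the centroid), so ``the line $Y_1Y_2$'' is not defined, and the hypothesis is vacuous. Second, I would record that the real root $u$ of (\ref{eq:u}) satisfies $0<u<\min(a,b,c)$. Writing $f(x)=x^3-(a-x)(b-x)(c-x)=2x^3-(a+b+c)x^2+(ab+bc+ca)x-abc$, one has $f(0)=-abc<0$ and $f(\min(a,b,c))=\min(a,b,c)^3>0$, while the triangle inequalities give $a^2+b^2+c^2<2(ab+bc+ca)<4(ab+bc+ca)$, which makes the discriminant of $f'$ negative, so $f$ is strictly increasing and $u$ is its unique real root, lying strictly between $0$ and $\min(a,b,c)$. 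This strict positivity is what makes the later case analysis collapse cleanly.

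The core is a general observation about isotomic conjugates. For $P=(p:q:r)$ with $pqr\neq 0$ and isotomic conjugate $P^{*}=(qr:rp:pq)$, a direct cross-product computation gives that the line $PP^{*}$ is $(p(q-r)(q+r):q(r-p)(r+p):r(p-q)(p+q))$. Hence $PP^{*}$ passes through $A=(1:0:0)$ iff $q=\pm r$; and when $q=r$ the line reduces (away from the degeneracies where it is undefined) to $(0:1:-1)$, which contains $A$ and the midpoint $(0:1:1)$ of $BC$, so it is exactly the $A$-median. The case $q=-r$ yields the line $y+z=0$, which is not a median, so it will have to be excluded. Applying this to $Y_1=(u^2:(a-u)(b-u):u(b-u))$: its second and third coordinates are equal iff $(b-u)(a-2u)=0$, i.e. $a=2u$, and they are opposite iff $(b-u)a=0$, which is impossible; so $Y_1Y_2$ passes through $A$ iff $a=2u$, in which case it is the $A$-median. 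The analogous computations at $B$ and $C$ (or a cyclic relabelling of the vertices, which preserves orientation and hence the labels $Y_1,Y_2$) show $Y_1Y_2$ passes through $B$ iff $b=2u$ and through $C$ iff $c=2u$, each time giving the corresponding median.

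Finally I would check that the three possibilities are mutually exclusive for a non-equilateral triangle, so that ``passes through a vertex'' unambiguously picks out which median: if, say, $a=b=2u$, then (\ref{eq:u}) becomes $u^3=u\cdot u\cdot(c-u)$, forcing $c=2u$ and an equilateral triangle. I do not expect a genuine obstacle here; the computations are short. The only points needing care are the preliminary facts about $u$ (its uniqueness and the bounds $0<u<\min(a,b,c)$), which drive all the ``impossible'' case exclusions, and the honest treatment of the degenerate equilateral case in which the line is undefined. If one preferred to avoid the general isotomic-conjugate lemma, one can instead substitute $a=2u$ directly into $\mathrm{Cross}[Y_1,Y_2]$ and watch its last two coordinates collapse to $\pm\,bu^2(b-2u)$, a quantity that is nonzero precisely because the triangle is not equilateral, leaving the line $(0:1:-1)$.
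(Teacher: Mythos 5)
Your proof is correct, but it takes a genuinely different route from the paper's. The paper's argument is purely synthetic and only a few lines long: if $B$, $Y_1$, $Y_2$ are collinear, then the cevian $BY_1$ --- which by the very definition of $Y_1$ meets $AC$ at the point $E_1$ with $CE_1=u$ --- is the same line as the cevian $BY_2$, which meets $AC$ at the point $E_2$ with $AE_2=u$; hence $E_1=E_2$ is a single point $M$ with $AM=CM=u$, i.e.\ the midpoint of $AC$, and $BM$ is a median. No coordinates, no algebra, no use of equation (\ref{eq:u}). Your argument instead runs through barycentrics via the general lemma that the line joining $P=(p:q:r)$ to its isotomic conjugate is $\bigl(p(q^2-r^2):q(r^2-p^2):r(p^2-q^2)\bigr)$, and all of your computations check out (including the verification that the simple coordinates in (\ref{Y1Y2}) are indeed isotomic conjugates, the exclusion of the $q=-r$ branch via $0<u<\min(a,b,c)$, and the equilateral degeneracy where $Y_1=Y_2$). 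What your route buys that the paper's does not: it simultaneously extracts the sharp criterion that $Y_1Y_2$ passes through $A$ (resp.\ $B$, $C$) iff $a=2u$ (resp.\ $b=2u$, $c=2u$), which is essentially the computational half of the later Theorem~\ref{thm:vertex}, and it settles that at most one vertex can occur for a non-equilateral triangle. What it costs is length and machinery; for the statement as posed, the paper's observation that the two defining cevians from the vertex must coincide does the whole job in one stroke, and it is worth internalizing that trick since it is exactly the kind of argument the authors are asking for in their open questions about synthetic proofs.
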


\begin{proof}
Without loss of generality, assume that $Y_1Y_2$ passes through vertex B 
and $\triangle ABC$ is named counterclockwise (Figure~\ref{fig:harmonic}).

\begin{figure}[h!t]
\centering
\includegraphics[width=0.6\linewidth]{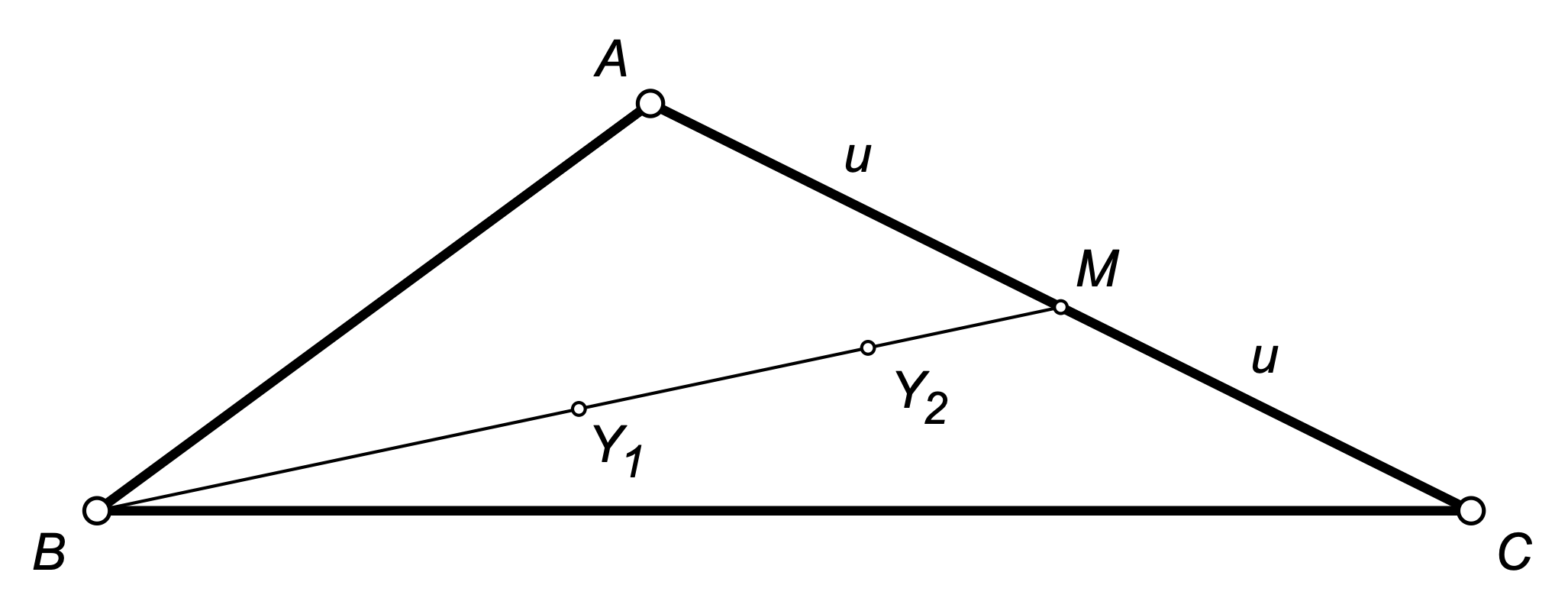}
\caption{$B$, $Y_1$, and $Y_2$ are collinear}
\label{fig:harmonic}
\end{figure}

Let $BY_1$ meet $AC$ at $M$. Then, from Figure~\ref{fig:YffPoints} (left), we have $CM=u$.
Line $BY_2$ is the same line, so also meets $AC$ at $M$.
From Figure~\ref{fig:YffPoints} (right), we have $AM=u$.
Thus, $AM=CM$ and $BM$ is a median of the triangle.
\end{proof}

\begin{theorem}
\label{thm:median}
Let $ABC$ be a triangle such that $BY_1$ is a median.
Then $Y_1$ divides the median into two segments in the ratio $2b:2a-b$.
\end{theorem}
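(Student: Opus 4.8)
The plan is to turn the hypothesis ``$BY_1$ is a median'' into a single equation in the Yff parameter $u$, feed that into the known barycentric coordinates of $Y_1$, and then read the division ratio straight off a convex combination.

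First I would locate the cevian from $B$. By Theorem~\ref{thm:yff} and Figure~\ref{fig:YffPoints} (left), the cevian through $Y_1$ emanating from $B$ is $BE_1$, where $E_1$ lies on side $CA$ with $CE_1=u$; equivalently, the line $BY_1$ computed from the simple coordinates (\ref{Y1Y2}) meets $CA$ at the point $(u:0:b-u)$. Hence $BY_1$ is a median exactly when this point is the midpoint $(1:0:1)$ of $CA$, i.e.\ exactly when $b=2u$.

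Next I would substitute $b=2u$ (so $b-u=u$) into $Y_1=\bigl(u^2:(a-u)(b-u):u(b-u)\bigr)$ from display (\ref{Y1Y2}). It collapses to $Y_1=(u^2:u(a-u):u^2)=(u:a-u:u)$, and rescaling by $2$ gives $Y_1=(b:2a-b:b)$. Writing $M=(1:0:1)$ for the midpoint of $AC$ and using $A+C=2M$ in normalized coordinates, one finds
\[
Y_1=\frac{(2a-b)\,B+2b\,M}{2a+b},
\]
a convex combination of $B$ and $M$ (note $2a-b=2(a-u)>0$ since $u<a$). Therefore $Y_1$ lies on the median $BM$ and divides it in the ratio $BY_1:Y_1M=2b:(2a-b)$, which is the assertion.

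I do not expect a genuine obstacle here: the computation is a few lines. The only things to be careful about are (i) matching $E_1$ with the correct side ($CA$, of length $b$), so that the median condition becomes $b=2u$ rather than some other relation, and (ii) remembering that a barycentric coordinate triple plays the role of masses, so that $Y_1=\lambda B+\mu M$ with $\lambda+\mu=1$ yields $BY_1:Y_1M=\mu:\lambda$ and not the reverse. It may also be worth remarking, though it is not needed for the statement, that combining $b=2u$ with Equation~(\ref{eq:u}) forces $u^2=(a-u)(c-u)$, hence $u=\dfrac{ac}{a+c}$ and $b=\dfrac{2ac}{a+c}$ is the harmonic mean of $a$ and $c$ --- which explains the term \emph{harmonic triangle}.
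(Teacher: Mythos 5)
Your proof is correct, and it reaches the same conclusion $BY_1:Y_1M=2b:(2a-b)$ by a genuinely different route. The paper argues synthetically: it reads $CE=BD=u$ off the defining figure, deduces $AE=u$ (hence $u=b/2$) from the median hypothesis, and then applies the Theorem of Menelaus to $\triangle EBC$ with transversal $AD$ to extract the ratio in which $Y_1$ cuts $BE$. You instead encode the median hypothesis as $b=2u$ via the trace $(u:0:b-u)$ of the cevian $BY_1$ on $CA$, substitute into the simple coordinates of display (\ref{Y1Y2}) to get $Y_1=(b:2a-b:b)$, and read the ratio off the normalized convex combination $Y_1=\bigl((2a-b)B+2bM\bigr)/(2a+b)$. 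Both arguments are short; yours has the advantages of being fully self-contained (the paper's $x$ and $y$ are only identified in an unlabeled figure), of making the positivity check $2a-b=2(a-u)>0$ explicit, and of flowing naturally into the harmonic-mean computation $b=2ac/(a+c)$ that the paper only establishes later in Theorem~\ref{thm:vertex}; the paper's Menelaus argument buys a coordinate-free derivation in the spirit of the ``purely synthetic'' proofs the authors repeatedly ask for. Your two cautionary remarks --- matching $E_1$ to side $CA$ so the condition is $b=2u$, and orienting the mass-point ratio as $BY_1:Y_1M=\mu:\lambda$ --- are exactly the right places to be careful, and you have handled both correctly.
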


\begin{proof}
Let $BY_1$ meet $AC$ at $E$ and let $AY_1$ meet $BC$ at $D$. 
\begin{figure}[h!t]
\centering
\includegraphics[width=0.5\linewidth]{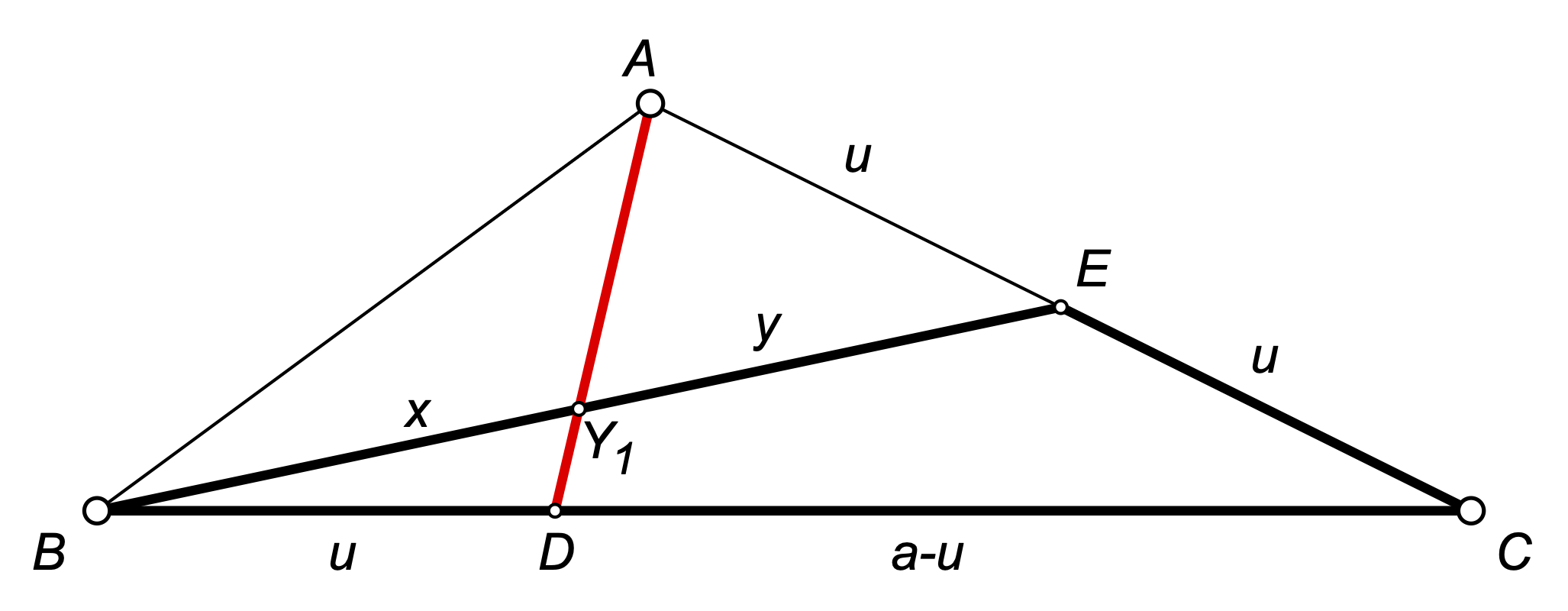}
\label{fig:harmonicProof}
\end{figure}

Then, from Figure~\ref{fig:YffPoints} we see that $CE=BD=u$.
Since $BY_1$ is a median, $AE=CE$, so $AE=u$.
Since $BD=u$, we have $CD=a-u$.
Line $AD$ is a transversal of $\triangle EBC$, so by the Theorem of Menelaus, we have
$x(a-u)u=yu(2u)$, so $x/y=2u/(a-u)$. But $u=b/2$. Thus $x/y=b/(a-\frac{b}2)$ which implies
$x/y=2b/(2a-b)$.
\end{proof}

\begin{corollary}
If $Y_1$ is the midpoint of a median, then the triangle is similar to a 3--4--6 triangle.
\end{corollary}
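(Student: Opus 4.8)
The plan is to combine Theorem~\ref{thm:median} with the defining equation~\eqref{eq:u} for $u$. Because the Yff configuration is cyclically symmetric in the vertices (the relabelling $A\to B\to C\to A$ preserves orientation and hence fixes $Y_1$), I may assume without loss of generality that the median through which $Y_1$ passes is $BY_1$, meeting $AC$ at its midpoint. If $Y_1$ is moreover the \emph{midpoint} of this median, it divides it in the ratio $1:1$, so Theorem~\ref{thm:median} gives $2b = 2a-b$, that is, $2a = 3b$.

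A single relation does not determine the shape, so I would also use the fact recorded inside the proof of Theorem~\ref{thm:median} that $u = b/2$ (which followed from $CE=u$, $AE=CE$, and $AE+CE=b$, independently of~\eqref{eq:u}). Substituting $u = b/2$ into~\eqref{eq:u}, dividing through by $b/2$, and cancelling the $b^2/4$ terms produces the \emph{harmonic triangle} relation $b(a+c) = 2ac$; equivalently, $b$ is the harmonic mean of $a$ and $c$. Solving the pair $2a = 3b$ and $b(a+c)=2ac$ simultaneously: put $a = \tfrac32 b$, so $b\bigl(\tfrac32 b + c\bigr) = 3bc$, hence $\tfrac32 b = 2c$ and $c = \tfrac34 b$. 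Therefore $a:b:c = \tfrac32:1:\tfrac34 = 6:4:3$, so $\triangle ABC$ is similar to a 3--4--6 triangle.

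I expect no real obstacle: once Theorem~\ref{thm:median} is available the argument is a short elimination. The only points needing care are to be explicit about the two independent facts being used — the ratio statement of Theorem~\ref{thm:median} and the harmonic relation extracted from~\eqref{eq:u} via $u=b/2$ — and to check that the resulting triple is consistent and realizable: for $(a,b,c)=(6,4,3)$ one finds $u = b/2 = 2$, which satisfies $0 < u < \min(a,b,c) = 3$ and indeed $2^3 = (6-2)(4-2)(3-2)$, so a genuine harmonic triangle with $Y_1$ the midpoint of a median exists and is unique up to similarity.
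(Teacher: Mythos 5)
Your argument is correct and is exactly the intended one: the paper states the corollary without proof, but it follows just as you say from Theorem~\ref{thm:median} (midpoint forces $2b=2a-b$) combined with the harmonic relation $b(a+c)=2ac$ obtained by setting $u=b/2$ in Equation~(\ref{eq:u}), yielding $a:b:c=6:4:3$. Your consistency check that $(6,4,3)$ satisfies the triangle inequality and $u=2$ satisfies $u^3=(a-u)(b-u)(c-u)$ is a nice touch the paper omits.
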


\begin{theorem}
Let $ABC$ be a triangle such that $BY_2$ is a median.
Then $Y_2$ divides the median into two segments in the ratio $2(2a-b):b$.
\end{theorem}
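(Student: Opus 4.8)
The natural plan is to deduce this from Theorem~\ref{thm:median} together with the fact, recorded in the Introduction, that $Y_1$ and $Y_2$ are isotomic conjugates. First I would check that the hypothesis forces the same special triangle as in Theorem~\ref{thm:median}: if $BY_2$ is a median, then the line $BY_2=BE_2$ meets $CA$ at its midpoint, but by the defining equality $AE_2=u$ of Theorem~\ref{thm:yff} it meets $CA$ at $E_2$ with $AE_2=u$, so $2u=b$, i.e. $u=b/2$ (equivalently, $b$ is the harmonic mean of $a$ and $c$). Since $u=b/2$ is also exactly the condition under which $BY_1$ is a median, both $Y_1$ and $Y_2$ lie on the median $BM$ from $B$, where $M$ is the midpoint of $CA$.

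Next I would use that isotomic conjugation fixes $A$, $B$, $C$ and the midpoint of each side, hence carries the line $BM$ to itself; thus the map $Y_1\mapsto Y_2$ is the restriction of isotomic conjugation to this median, and it remains only to see how that restriction acts on the ``distance from $B$'' parameter. A short barycentric computation does this: a point dividing $BM$ in the ratio $BP:PM=\lambda:\mu$ has barycentric coordinates $(\lambda:2\mu:\lambda)$, and its isotomic conjugate is $(2\mu\lambda:\lambda^2:2\mu\lambda)\sim(2\mu:\lambda:2\mu)$, which divides $BM$ in the ratio $4\mu:\lambda$. So the parameter $t=BP/PM$ is sent to $4/t$. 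Since Theorem~\ref{thm:median} gives $BY_1:Y_1M=2b:(2a-b)$, we conclude $BY_2:Y_2M=4(2a-b):2b=2(2a-b):b$, as claimed.

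As a fallback entirely in the style of the proof of Theorem~\ref{thm:median}, I could argue directly by Menelaus instead: with $u=b/2$ forced as above, let $AY_2$ meet $BC$ at $D_2$, so that by Theorem~\ref{thm:yff} $CD_2=u$ and hence $BD_2=a-u$, and apply Menelaus to $\triangle E_2BC$ cut by the transversal $AD_2$, which passes through $Y_2$. Since $E_2$ is the midpoint of $CA$, this yields $BY_2:Y_2E_2=2(a-u):u$, and substituting $u=b/2$ gives $2(2a-b):b$ again. The one place this differs from the computation for $Y_1$ --- and the only step needing care --- is that here $CD_2=u$ rather than $BD_1=u$, which inverts a single Menelaus factor; that inversion is exactly what turns the ratio $2b:(2a-b)$ of Theorem~\ref{thm:median} into its reciprocal $2(2a-b):b$. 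So I do not expect a genuine obstacle in either route; the whole content of the statement is that one reciprocal.
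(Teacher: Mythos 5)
Your proposal is correct, and in fact supplies more than the paper does: the paper omits this proof entirely, saying only that it is ``similar to the proof of Theorem~\ref{thm:median}.'' Your fallback Menelaus argument is precisely that intended proof, carried out correctly --- the median hypothesis on $BY_2=BE_2$ forces $b-u=AE_2=CE_2=u$, hence $u=b/2$ just as for $Y_1$, and the single inverted factor $BD_2/D_2C=(a-u)/u$ versus $BD_1/D_1C=u/(a-u)$ is exactly what replaces $2b:(2a-b)$ by $2(2a-b):b$. Your primary route is genuinely different and arguably more illuminating: since isotomic conjugation interchanges $Y_1$ and $Y_2$ (as the paper notes, and as the simple coordinates in display (\ref{Y1Y2}) confirm) and fixes $B$ and the midpoint $M$ of $CA$, it stabilizes the median $BM$, and your computation that a point $(\lambda:2\mu:\lambda)$ maps to $(2\mu:\lambda:2\mu)$, i.e.\ that the ratio $t=BP/PM$ goes to $4/t$, derives the new ratio directly from Theorem~\ref{thm:median} and explains structurally why the two ratios are (up to the factor $4$) reciprocals --- something the Menelaus computation exhibits but does not explain. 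Both routes check out; no gaps.
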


\begin{proof}
The proof is similar to the proof of Theorem~\ref{thm:median}, so the details are omitted.
\end{proof}

\newpage

The \emph{harmonic mean} of $x$ and $y$ is $\frac{2}{1/x+1/y}$.
A triangle is called a \emph{harmonic triangle} if one side is the harmonic mean of the other two sides.
We say that the triangle is \emph{harmonic}.

\begin{theorem}
\label{thm:vertex}
A triangle is harmonic if and only if $Y_1Y_2$ passes through a vertex.
\end{theorem}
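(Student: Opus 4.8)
The plan is to show, for a fixed labeling, that ``$Y_1Y_2$ passes through the vertex $B$'' is equivalent to ``$u=b/2$'', and that ``$u=b/2$'' is equivalent to ``$b$ is the harmonic mean of $a$ and $c$''. Since both the definition of a harmonic triangle and the property ``$Y_1Y_2$ passes through some vertex'' are invariant under relabeling the vertices (a relabeling may swap $Y_1$ and $Y_2$, but the line $Y_1Y_2$ and the vertex set are unchanged), the theorem follows by symmetry.

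For the first equivalence I would use only the cevian structure of Theorem~\ref{thm:yff}. There, $Y_1$ lies on the cevian $BE_1$ and $Y_2$ lies on the cevian $BE_2$, where $E_1,E_2$ are the points of side $CA$ with $CE_1=u$ and $AE_2=u$; note that $E_1=E_2$ exactly when $u=b-u$, i.e. $u=b/2$. If $E_1=E_2$, then $Y_1$ and $Y_2$ both lie on the line $BE_1$, so $Y_1Y_2$ passes through $B$. Conversely, if $Y_1Y_2$ passes through $B$, then since $Y_1\ne B\ne Y_2$ the lines $BE_1$, $Y_1Y_2$, $BE_2$ all coincide, and intersecting with line $CA$ gives $E_1=E_2$. (Alternatively, one half of this is immediate from Theorem~\ref{thm:medial} and its proof, which already shows that if $Y_1Y_2$ passes through $B$ it meets $CA$ at its midpoint, forcing $b=2u$.)

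For the second equivalence I would substitute $x=b/2$ into the defining equation (\ref{eq:u}): after dividing by $b/2\ne 0$ and expanding, $\bigl(\tfrac b2\bigr)^2=(a-\tfrac b2)(c-\tfrac b2)$ reduces to $ac=\tfrac b2(a+c)$, i.e. $b=\dfrac{2ac}{a+c}=\dfrac{2}{1/a+1/c}$, the harmonic mean of $a$ and $c$. This gives the implication ``$u=b/2\Rightarrow$ harmonic'' at once. For the reverse implication, if $b=\tfrac{2ac}{a+c}$ then running the computation backwards shows that $x=b/2$ is a root of (\ref{eq:u}); moreover $\tfrac b2=\tfrac{ac}{a+c}$ is strictly less than each of $a$, $b$, $c$, so it lies in $(0,\min\{a,b,c\})$. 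On that interval $x\mapsto x^3$ is strictly increasing while $x\mapsto(a-x)(b-x)(c-x)$ is strictly positive and strictly decreasing, so (\ref{eq:u}) has exactly one root there; since the Yff value $u$ must itself lie in $(0,\min\{a,b,c\})$ (otherwise the points $D_1,E_1,F_1$ would not lie on the sides of the triangle), we conclude $u=b/2$.

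The one place needing care is this last identification: the equation (\ref{eq:u}) may a priori have more than one real root, so it is essential to pin $u$ down as the root in $(0,\min\{a,b,c\})$ before matching it with $b/2$. Everything else is a chain of one-line observations together with a single routine substitution.
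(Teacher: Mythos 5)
Your proof is correct, and for the converse it is actually tighter than the paper's. The forward direction is essentially the paper's: both reduce ``$Y_1Y_2$ passes through $B$'' to $u=b/2$ (the paper via Theorem~\ref{thm:medial}, you directly via the cevians $BE_1$, $BE_2$ through $Y_1$, $Y_2$), and both then substitute $x=b/2$ into Equation~(\ref{eq:u}) to get $b=2ac/(a+c)$. For the converse, the paper computes a barycentric determinant to show that $Y_1Y_2$ passes through $B$ if and only if $b=2u$, and then simply asserts ``if the triangle is harmonic, then $b=2u$'' -- but that implication is exactly the nontrivial step, since a priori the cubic (\ref{eq:u}) could have several real roots and the harmonic condition only guarantees that $b/2$ is \emph{some} root. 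You close this gap explicitly: $b/2=ac/(a+c)$ lies in $(0,\min\{a,b,c\})$, the function $x^3-(a-x)(b-x)(c-x)$ is strictly increasing there and changes sign, so the equation has a unique root in that interval, and the Yff value $u$ must be that root because the construction forces $0<u<\min\{a,b,c\}$. Hence $u=b/2$, both cevians from $B$ hit the midpoint of $AC$, and $Y_1Y_2$ is the median through $B$. So your argument buys a fully elementary, determinant-free proof and supplies the uniqueness-of-root justification that the paper's converse leaves implicit; the paper's determinant computation, by contrast, is machinery it does not ultimately need, since its own closing lines repeat the synthetic median argument. The only cosmetic caveat (shared with the paper) is the degenerate equilateral case, where $Y_1=Y_2$ and the line $Y_1Y_2$ is undefined.
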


\begin{proof}
If $Y_1Y_2$ passes through a vertex, assume
without loss of generality, that the vertex is $B$
and $\triangle ABC$ is named counterclockwise (Figure~\ref{fig:harmonic}).
From Theorem~\ref{thm:medial}, $BY_1$ is a median and so $u=b/2$.
Letting $u=b/2$ in Equation~(\ref{eq:u}) gives $b(ab-2ac+bc)=0$ which implies $b=2ac/(a+c)$
and the triangle is harmonic.

Conversely, if the triangle is harmonic, assume without loss of generality that $2/b=1/a+1/c$.
The barycentric coordinates for $B$ are $(0:1:0)$.
Using the simple coordinates for $Y_1$ and $Y_2$,
we find that that $Y_1Y_2$ passes through $B$ if and only if
$$\left|
\begin{array}{ccc}
u^2&(a-u)(b-u)&(b-u)u\\
(a-u)(b-u)&u_2&(a-u)u\\
0&1&0
\end{array}
\right|=0.$$
Expanding this gives $b (b - 2 u) (a - u) u=0$.
From  \cite{Yff}, it is known that $0<u<a$, so therefore,
$Y_1Y_2$ passes through vertex $B$ in a harmonic triangle, if and only if $b=2u$.\\
If the triangle is harmonic, then $b=2u$, so $b-u=u$, and from Figure~\ref{fig:YffPoints},
$BY_1$ is a median. Similarly, $BY_2$ is a median.
Therefore $Y_1Y_2$ passes through $B$.
\end{proof}

\begin{theorem}
\label{thm:X2}
A triangle is harmonic if and only if $Y_1Y_2$ passes through $X_2$.
\end{theorem}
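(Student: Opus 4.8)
The plan is to prove the two implications separately, leaning on the groundwork already laid in Theorems~\ref{thm:medial} and~\ref{thm:vertex}.

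For the direction ``harmonic $\Rightarrow$ $Y_1Y_2$ passes through $X_2$'', essentially no new work is needed: if the triangle is harmonic, Theorem~\ref{thm:vertex} says that $Y_1Y_2$ passes through a vertex, Theorem~\ref{thm:medial} then says that $Y_1Y_2$ is a median, and every median passes through the centroid $X_2=(1:1:1)$.

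For the converse I would argue by coordinates, as in the earlier proofs. Using the simple coordinates~(\ref{Y1Y2}) for $Y_1$ and $Y_2$, collinearity of $Y_1$, $Y_2$, $X_2=(1:1:1)$ amounts to the vanishing of
$$\left|\begin{array}{ccc} u^2 & (a-u)(b-u) & (b-u)u \\ (a-u)(b-u) & u^2 & (a-u)u \\ 1 & 1 & 1 \end{array}\right|.$$
Abbreviating the first two rows as $(p,q,r_1)$ and $(q,p,r_2)$, this determinant equals $(p-q)(p+q-r_1-r_2)$, so the condition is $\bigl(u(a+b)-ab\bigr)(2u-a)(2u-b)=0$. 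Hence one of three cases holds, and in each I would substitute back into Equation~(\ref{eq:u}), which $u$ also satisfies: if $2u=a$, then $a-u=u$ and (\ref{eq:u}) collapses to $a=2bc/(b+c)$; if $2u=b$, symmetrically $b=2ac/(a+c)$; and if $u=ab/(a+b)$, then $a-u=a^2/(a+b)$ and $b-u=b^2/(a+b)$, so (\ref{eq:u}) reduces to $c=2ab/(a+b)$. In every case one side is the harmonic mean of the other two, so the triangle is harmonic. (Equivalently one could \texttt{Eliminate} $u$ between the factored condition and~(\ref{eq:u}) and factor the resulting polynomial, exactly as in the proof of Theorem~\ref{thm:X8X20}.)

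The computation here is short, so the only genuine obstacle is bookkeeping: expanding the $3\times3$ determinant cleanly and spotting the factorization $(p-q)(p+q-r_1-r_2)$ that exhibits the three cases. It is worth noting that the converse also admits a coordinate-free explanation: since $Y_1$ and $Y_2$ are isotomic conjugates and $X_2$ is its own isotomic conjugate, the image of the line $Y_1Y_2$ under isotomic conjugation is a circumconic that contains the three collinear points $Y_1$, $Y_2$, $X_2$. A non-degenerate conic cannot meet a line in three points, so (for a non-equilateral triangle, where these three points are distinct) this circumconic is degenerate, hence a pair of lines covering $A$, $B$, $C$; since $Y_1Y_2$ is one of these lines and cannot be a sideline (as $Y_1$ and $Y_2$ lie strictly inside the triangle), it must pass through a vertex, and Theorem~\ref{thm:vertex} then gives harmonicity.
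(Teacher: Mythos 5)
Your proof is correct, and the forward direction (harmonic $\Rightarrow$ passes through $X_2$) is identical to the paper's. For the converse the paper computes the same determinant and obtains the same factorization --- its $(a-2u)(b-2u)(bu-ab+au)$ is your $(2u-a)(2u-b)\bigl(u(a+b)-ab\bigr)$ --- but then feeds the whole condition together with Equation~(\ref{eq:u}) to Mathematica's \texttt{Eliminate}, producing the degree-nine polynomial $a^3b^3(ab+ac-2bc)(2ab-ac-bc)(ab-2ac+bc)=0$ whose factors exhibit harmonicity. Your case-by-case substitution is a genuine improvement here: treating $2u=a$, $2u=b$, and $u=ab/(a+b)$ separately and plugging each directly into~(\ref{eq:u}) yields the three harmonic-mean relations $a=2bc/(b+c)$, $b=2ac/(a+c)$, $c=2ab/(a+b)$ by hand, with no computer algebra and with the added information of \emph{which} side is the harmonic mean in each case. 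Your closing isotomic-conjugation argument is also sound and genuinely different in spirit: since $Y_1$ and $Y_2$ are isotomic conjugates and $X_2$ is self-conjugate, the conjugate of the line $Y_1Y_2$ is a circumconic meeting that line in three points, forcing degeneracy and hence a vertex on $Y_1Y_2$; your caveat about distinctness (i.e.\ the equilateral case, which is harmonic anyway) is the only point that needs the care you already give it. Either of your two routes would serve as a cleaner replacement for the paper's elimination step.
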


\begin{proof}
Assume that $Y_1Y_2$ passes through $X_2$.
The barycentric coordinates for $X_2$ are  $(1:1:1)$. Using the simple coordinates for $Y_1$ and $Y_2$,
the determinant condition for $Y_1$, $Y_2$ and $X_2$ to be collinear is
$$\left|
\begin{array}{ccc}
u^2&(a-u)(b-u)&(b-u)u\\
(a-u)(b-u)&u_2&(a-u)u\\
1&1&1
\end{array}
\right|=0.$$
Expanding this out gives $(a - 2 u) (b - 2 u) (b u -ab + au)=0$. Eliminating variable $u$ between this equation and 
Equation~(\ref{eq:u}) gives
$$a^3 b^3 (a b+a c-2 b c) (2 a b-a c-b c) (a b-2 a c+b c)=0$$
which shows that $\triangle ABC$ is harmonic.

Conversely, if the triangle is harmonic, then by Theorem~\ref{thm:vertex}, $Y_1Y_2$
passes through a vertex. From Theorem~\ref{thm:medial}, we can conclude that $Y_1Y_2$ is a median.
Thus, $Y_1Y_2$ passes through $X_2$.
\end{proof}

\begin{theorem}
A triangle is harmonic if and only if $Y_1Y_2$ bisects a side of the triangle.
\end{theorem}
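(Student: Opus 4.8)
The plan is to prove the two implications separately; the forward one is immediate from results already in hand, while the converse reduces, after a single determinant computation, to the same equation $u=b/2$ that governed the proof of Theorem~\ref{thm:vertex}. For the forward implication: if $\triangle ABC$ is harmonic, then by Theorem~\ref{thm:vertex} the line $Y_1Y_2$ passes through a vertex, so by Theorem~\ref{thm:medial} it is a median of the triangle, and a median passes through the midpoint of the opposite side; hence $Y_1Y_2$ bisects a side, and no computation is needed.

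For the converse, suppose $Y_1Y_2$ bisects a side. A cyclic relabeling of the vertices alters neither the triangle, nor its Yff points $Y_1,Y_2$, nor the property of being harmonic, and it cyclically permutes the three sides; so we may assume the bisected side is $CA$, i.e. that $Y_1$, $Y_2$, and the midpoint $M=(1:0:1)$ of $CA$ are collinear. Using the simple coordinates~(\ref{Y1Y2}) for $Y_1$ and $Y_2$, I would write down the collinearity determinant
$$\left|\begin{array}{ccc} u^2 & (a-u)(b-u) & u(b-u) \\ (a-u)(b-u) & u^2 & u(a-u) \\ 1 & 0 & 1 \end{array}\right|=0$$
and check — this is the one routine expansion, of the same kind as in the proofs of Theorems~\ref{thm:vertex} and~\ref{thm:X2} — that it factors as
$$(b-2u)\bigl[(2a+b)u^2-a(a+2b)u+a^2b\bigr]=0.$$

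It then remains to rule out the quadratic factor $Q(u):=(2a+b)u^2-a(a+2b)u+a^2b$, and this is the only genuinely new point in the argument. Here I would use the bound $0<u<b$, which holds for the same reason the bound $0<u<a$ does in Theorem~\ref{thm:vertex} (the point $E_1$ lies on side $CA$, of length $b$, at distance $u$ from $C$). One computes $Q(0)=a^2b>0$ and $Q(b)=b^3>0$; since $Q$ opens upward and is positive at both ends of $[0,b]$, if it vanished somewhere in $(0,b)$ it would have two roots there, forcing both a positive discriminant $a^3(a-4b)>0$ and a vertex $u^{*}=\frac{a(a+2b)}{2(2a+b)}$ lying in $(0,b)$. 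But $a>4b$ gives $a^2-2ab-2b^2>6b^2>0$, which is exactly the inequality $u^{*}>b$, a contradiction; hence $Q(u)\neq0$. Therefore $b=2u$, and substituting $u=b/2$ into Equation~(\ref{eq:u}) yields $2b(ab+bc-2ac)=0$, i.e. $ab+bc=2ac$, i.e. $2/b=1/a+1/c$, so the triangle is harmonic.

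The main obstacle is precisely this positivity argument disposing of the factor $Q$; everything else is either bookkeeping already present in the paper (the forward direction, the reduction to a single side) or a determinant expansion of a familiar type. An alternative, matching the style used for the other theorems, would be to eliminate $u$ between the factored determinant and Equation~(\ref{eq:u}) and then factor the resulting polynomial in $a,b,c$; the positivity argument is preferable here because it shows transparently why only the harmonic factor can occur.
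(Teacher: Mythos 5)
Your proof is correct, and the converse direction takes a genuinely different route from the paper's at the one nontrivial step. Both arguments expand the same collinearity determinant and obtain the same factorization (your $(b-2u)\bigl[(2a+b)u^2-a(a+2b)u+a^2b\bigr]$ is exactly the paper's $(b-2u)(a^2(b-u)+2au(u-b)+bu^2)$, and both match the determinant up to sign). The paper then eliminates $u$ between this equation and Equation~(\ref{eq:u}) via Gr\"obner bases, obtaining $a^3b^4(ab-2ac+bc)(a^2b+2a^2c-2abc+2ac^2+bc^2)=0$, and kills the extraneous factor by noting it would force $b=-2ac(a+c)/(a-c)^2<0$. You instead never eliminate $u$: you dispose of the quadratic factor $Q(u)$ directly on the interval $0<u<b$ (a bound that is legitimate, since $E_1$ lies on side $CA$ of length $b$ with $CE_1=u$), via $Q(0)=a^2b>0$, $Q(b)=b^3>0$, discriminant $a^3(a-4b)$, and the vertex location $u^*=\frac{a(a+2b)}{2(2a+b)}>b$ whenever $a\ge 4b$ --- all of which I have checked. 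Your route buys a proof with no elimination step and a transparent reason why only the factor $b=2u$ can occur; the paper's route buys uniformity with the computational style of its other theorems and produces the harmonic condition in $a,b,c$ directly rather than via the substitution $u=b/2$ into Equation~(\ref{eq:u}). The forward implication (harmonic $\Rightarrow$ the line is a median by Theorems~\ref{thm:vertex} and~\ref{thm:medial}) is identical in both. The only cosmetic quibble is that "positive discriminant" should be "nonnegative discriminant," but the contradiction $u^*>b$ covers the double-root case as well.
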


\begin{proof}
Without loss of generality.
assume that $Y_1Y_2$ passes through $E$, the midpoint of $AC$.
The barycentric coordinates for $E$ are  $(1:0:1)$.
Using the simple coordinates for $Y_1$ and $Y_2$,
the determinant condition for $Y_1$, $Y_2$ and $E$ to be collinear is
$$\left|
\begin{array}{ccc}
u^2&(a-u)(b-u)&(b-u)u\\
(a-u)(b-u)&u_2&(a-u)u\\
1&0&1
\end{array}
\right|=0.$$
Expanding this out gives $(b-2 u)(a^2 (b-u)+2 a u (u-b)+b u^2)=0$.
Eliminating variable $u$ between this equation and 
Equation~(\ref{eq:u}) gives
\begin{equation}
a^3 b^4 (a b - 2 a c + b c) (a^2 b + 2 a^2 c - 2 a b c + 2 a c^2 + b c^2) = 0.
\end{equation}
If $a^2 b + 2 a^2 c - 2 a b c + 2 a c^2 + b c^2 = 0$, then
$b= -2ac (a+c)/(a-c)^2$.
This equation is impossible since the left side is positive and the right side is negative.
Thus $a b - 2 a c + b c=0$ which implies that  the triangle is harmonic.

Conversely, if the triangle is harmonic, then by Theorem~\ref{thm:vertex}, $Y_1Y_2$
passes through a vertex. From Theorem~\ref{thm:medial}, we can conclude that $Y_1Y_2$ is a median.
Thus, $Y_1Y_2$ passes through $E$.
\end{proof}

\section{AP Triangles}

An \emph{AP Triangle} is a triangle whose sides are in arithmetic progression.

\begin{theorem}
\label{thm:AP}
Let $\triangle ABC$ (named counterclockwise), be an AP Triangle, with $2a=b+c$.
Then $AX_1\parallel Y_1Y_2$.
\end{theorem}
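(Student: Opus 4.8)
The plan is to work in barycentric coordinates with reference triangle $ABC$, exactly as in the proof of Theorem~\ref{thm:X8X20}, converting the parallelism into the collinearity of three points, one of which lies at infinity.

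First I would record $A=(1:0:0)$ and $X_1=(a:b:c)$, so that the line $AX_1$, being the cross product of these, is $(0:-c:b)$; by Lemma~\ref{lemma:inf} its point at infinity is $i_0=(-b-c:b:c)$. For $Y_1$ and $Y_2$ I would use the simple coordinates of display~(\ref{Y1Y2}), which avoid the cube roots in $u$. Since two lines are parallel precisely when they meet on the line at infinity, $Y_1Y_2\parallel AX_1$ is equivalent to the vanishing of
$$\Delta:=\left|\begin{array}{ccc} u^{2} & (a-u)(b-u) & u(b-u)\\ (a-u)(b-u) & u^{2} & u(a-u)\\ -b-c & b & c\end{array}\right|.$$

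Next I would expand $\Delta$ along its last row. The three $2\times2$ minors collapse via the identities $(a-u)^{2}-u^{2}=a(a-2u)$, $u^{2}-(b-u)^{2}=-b(b-2u)$, and $u^{4}-(a-u)^{2}(b-u)^{2}=\bigl(u(a+b)-ab\bigr)\bigl(2u^{2}-u(a+b)+ab\bigr)$, so that $\Delta=-(b+c)\,au(a-2u)(b-u)+b^{2}u(a-u)(b-2u)+c\bigl(u(a+b)-ab\bigr)\bigl(2u^{2}-u(a+b)+ab\bigr)$, a polynomial that is cubic in $u$ with coefficients in $\mathbb{Z}[a,b,c]$. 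Now I would impose the AP hypothesis by substituting $c=2a-b$. The identity I expect to confirm is
$$\Delta = ab\bigl(u^{3}-(a-u)(b-u)(c-u)\bigr),$$
whence $\Delta=0$ by the defining relation~(\ref{eq:u}) for $u$ (note $a,b\neq0$, being side lengths, so the factor $ab$ is harmless). Therefore $Y_1$, $Y_2$, and $i_0$ are collinear, and $Y_1Y_2\parallel AX_1$.

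The only substantive step is the last one: checking that, once $c=2a-b$ is substituted, the cubic-in-$u$ expression above is exactly $ab$ times $u^{3}-(a-u)(b-u)(c-u)$. This is a finite, mechanical comparison of the four coefficients of powers of $u$, and is the natural place to invoke Mathematica as is done throughout the paper. An alternative, closer in spirit to the proof of Theorem~\ref{thm:X8X20}, is to expand $\Delta$ without substituting, eliminate $u$ between $\Delta=0$ and~(\ref{eq:u}) by a Gr\"obner-basis computation, and exhibit $b+c-2a$ as a factor of the resulting polynomial in $a,b,c$; the factorization route above is shorter because no elimination is needed.
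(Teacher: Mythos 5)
Your proof is correct --- I checked the cofactor expansion and the coefficient comparison: with $c=2a-b$ the determinant does collapse to $ab\bigl(2u^{3}-(a+b+c)u^{2}+(ab+bc+ca)u-abc\bigr)=ab\bigl(u^{3}-(a-u)(b-u)(c-u)\bigr)$, which vanishes by Equation~(\ref{eq:u}) --- but it takes a genuinely different route from the paper. The paper never touches the coordinates of $Y_1$ and $Y_2$ at all: it quotes from the Bicentric Pairs reference \cite{BicentricPairs} that the point at infinity of the line $Y_1Y_2$ is the catalogued center $X_{513}=\bigl(a(b-c):b(c-a):c(a-b)\bigr)$, substitutes $b=2a-c$ into both this and the infinite point $(-b-c:b:c)$ of $AX_1$, and observes that both reduce to $(-2a:2a-c:c)$. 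That argument is three lines long, is entirely free of $u$, and makes visible the structural reason the theorem holds (the direction of $Y_1Y_2$ is a known, $u$-independent triangle function); its cost is reliance on an external catalogue fact. Your argument is self-contained --- you derive everything from the simple coordinates in display~(\ref{Y1Y2}) and the defining cubic for $u$ --- at the price of a nontrivial polynomial identity whose verification is the whole content of the proof. If you wanted the best of both, note that your determinant $\Delta$, viewed with the last row replaced by a general infinite point, would let you recover the $X_{513}$ fact yourself rather than cite it.
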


\begin{figure}[h!t]
\centering
\includegraphics[width=0.4\linewidth]{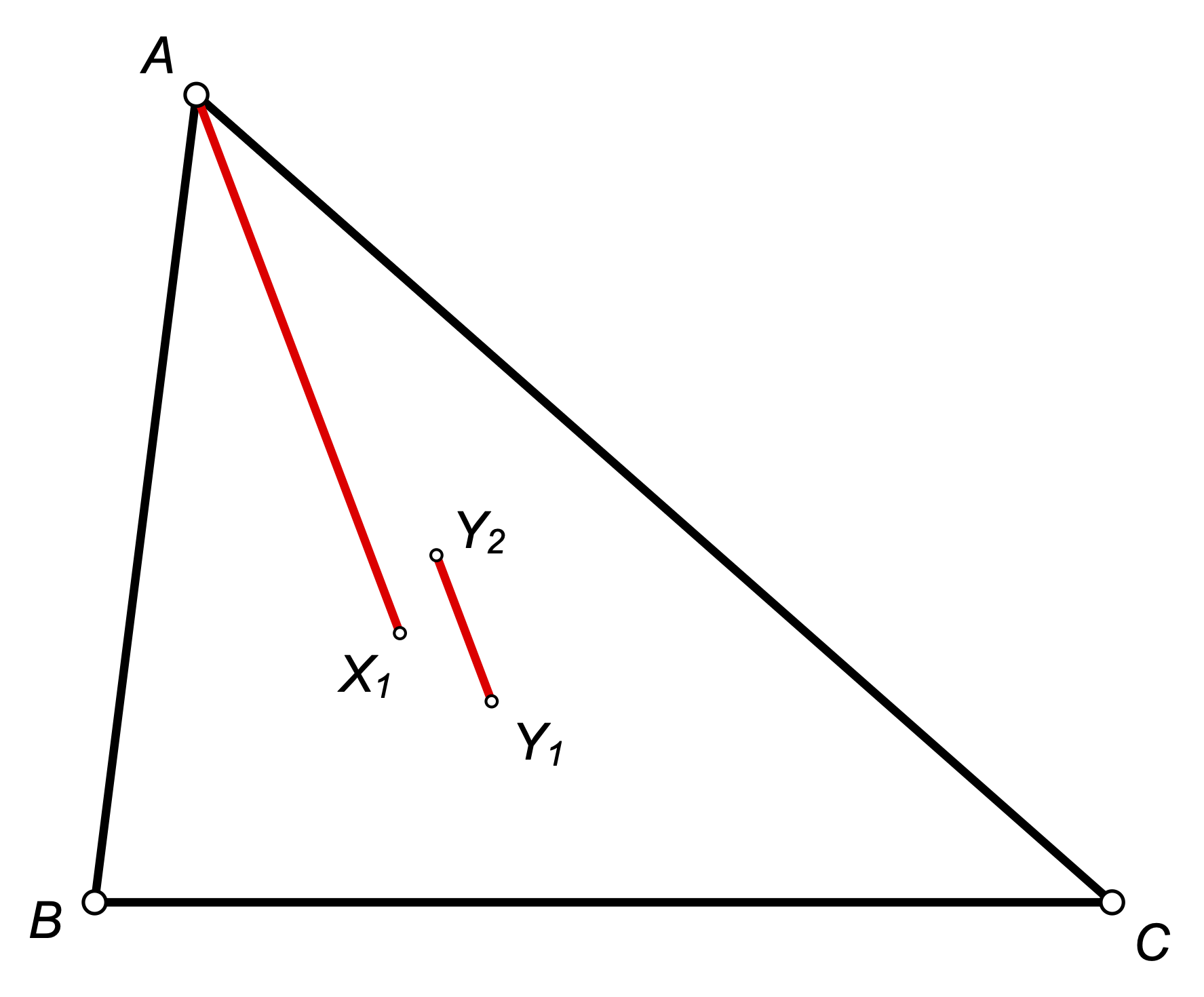}
\caption{red lines are parallel}
\label{fig:APTriangle}
\end{figure}

\begin{proof}
According to \cite{BicentricPairs}, the point at infinity on $Y_1Y_2$ is $X_{513}$.
From \cite{ETC}, we find that the barycentric coordinates of $X_{513}$ are
$$I_1=\left(a(b-c):b(c-a):c(a-b)\right).$$
Since $A=(1:0:0)$ and $X_1=(a:b:c)$, the point at infinity on $AX_1$ is
$$I_2=(-b-c:b:c)$$
When $b=2a-c$, we find that $I_1=(-2a:2a-c:c)$ and $I_2=(-2a:2a-c:c)$.
Thus, the two lines have the same point at infinity.
Therefore $AX_1\parallel Y_1Y_2$.
\end{proof}

\section{Double-Angle Triangles}

A \emph{double-angle triangle} is a triangle in which one angle is twice another angle.
Note that all heptagonal triangles are double-angle triangles.

The following result is fairly well known.

\newpage

\begin{lemma}[Double Angle Condition]
\label{lemma:doubleAngle}
Let $ABC$ be a triangle in which $\angle A=2\angle B$.
Then $a^2=b(b+c)$.
\end{lemma}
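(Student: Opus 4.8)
The plan is to prove the Double Angle Condition $a^2 = b(b+c)$ whenever $\angle A = 2\angle B$ by a direct trigonometric computation using the Law of Sines, which is the most transparent route and matches the style of the preceding lemmas (e.g. Lemma~\ref{lemma:heptagonal}). First I would write $a = 2R\sin A$, $b = 2R\sin B$, $c = 2R\sin C$ for the circumradius $R$, so that the claimed identity $a^2 = b(b+c)$ becomes $\sin^2 A = \sin B(\sin B + \sin C)$ after cancelling $4R^2$. Then I would substitute $A = 2B$ and, since $A + B + C = \pi$, also $C = \pi - A - B = \pi - 3B$, so $\sin C = \sin 3B$. The identity to verify reduces to $\sin^2 2B = \sin B(\sin B + \sin 3B)$.

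Next I would simplify the right-hand side. Using the sum-to-product formula, $\sin B + \sin 3B = 2\sin 2B\cos B$, so the right-hand side becomes $\sin B \cdot 2\sin 2B\cos B = \sin 2B\cdot(2\sin B\cos B) = \sin 2B\cdot\sin 2B = \sin^2 2B$, which is exactly the left-hand side. This establishes the identity, hence $a^2 = b(b+c)$. I would remark that the argument is reversible in the relevant range (all angles strictly between $0$ and $\pi$), so the condition $a^2 = b(b+c)$ is in fact equivalent to $\angle A = 2\angle B$, though only the stated direction is needed.

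An alternative I would mention briefly is a synthetic proof: extend $BA$ beyond $A$ to a point $D$ with $AD = AC = b$, so that triangle $ACD$ is isosceles and $\angle ADC = \angle ACD = \tfrac12\angle DAC = \tfrac12(\pi - \angle A) \cdots$; more cleanly, since $\angle DAC$ is the exterior angle at $A$ it equals $\angle A$ is wrong — instead one uses that $\angle CAB = 2\angle B$ forces $\angle DBC = \angle B = \angle DCA$, making $\triangle DBC \sim \triangle DCA$, whence $DC/DA = DB/DC$, i.e. $DC^2 = DA\cdot DB = b(b+c)$, and a short angle chase gives $DC = a$. The main obstacle in either approach is essentially bookkeeping: in the trigonometric version it is recognizing the product-to-sum collapse, and in the synthetic version it is setting up the auxiliary point and the similar triangles correctly and checking the degenerate cases. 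Since the paper calls this lemma "fairly well known," I would keep the proof to the three-line trigonometric version and omit the synthetic variant, or relegate it to a one-sentence remark.
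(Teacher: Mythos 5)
Your trigonometric proof is correct and complete: reducing $a^2=b(b+c)$ via the Law of Sines to $\sin^2 2B=\sin B(\sin B+\sin 3B)$ and collapsing the right side with $\sin B+\sin 3B=2\sin 2B\cos B$ is airtight, and your observation that the argument reverses gives the equivalence for free. It is, however, a genuinely different route from the paper's. The paper argues synthetically: it extends $CA$ past $A$ to a point $D$ with $AD=AB$, so that $DC=b+c$; the Exterior Angle Theorem forces the base angles of isosceles $\triangle ABD$ to equal $\angle B=\theta$, whence $\triangle BAC\sim\triangle DBC$ and $b/a=a/(b+c)$. Note that the paper's construction (extend $CA$ past $A$, with $AD=AB$) differs from the synthetic variant you sketch (extend $BA$ past $A$, with $AD=AC$); both work, but your sketch of that variant is garbled mid-sentence ("it equals $\angle A$ is wrong --- instead...") and would need to be cleaned up or cut before it could appear in print. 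What each approach buys: the trigonometric identity is shorter and mechanical, requiring no auxiliary figure, while the paper's synthetic proof is in the spirit of the open questions it poses elsewhere (asking for coordinate-free arguments) and exhibits the quantity $b+c$ as an actual segment length, which makes the relation $a^2=b\cdot(b+c)$ visibly a similar-triangles proportion rather than an algebraic coincidence. Either proof is acceptable; if you keep yours, delete the half-finished synthetic remark or replace it with the paper's cleaner construction.
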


\begin{proof}
Let $\angle CBA=\theta$, so that $\angle BAC=2\theta$.
Extend $CA$ past $A$ to a point $D$ such that $AD=AB$,
so that $ABD$ is an isosceles triangle and $\angle ABD=\angle BDA$.

\begin{figure}[h!t]
\centering
\includegraphics[width=0.4\linewidth]{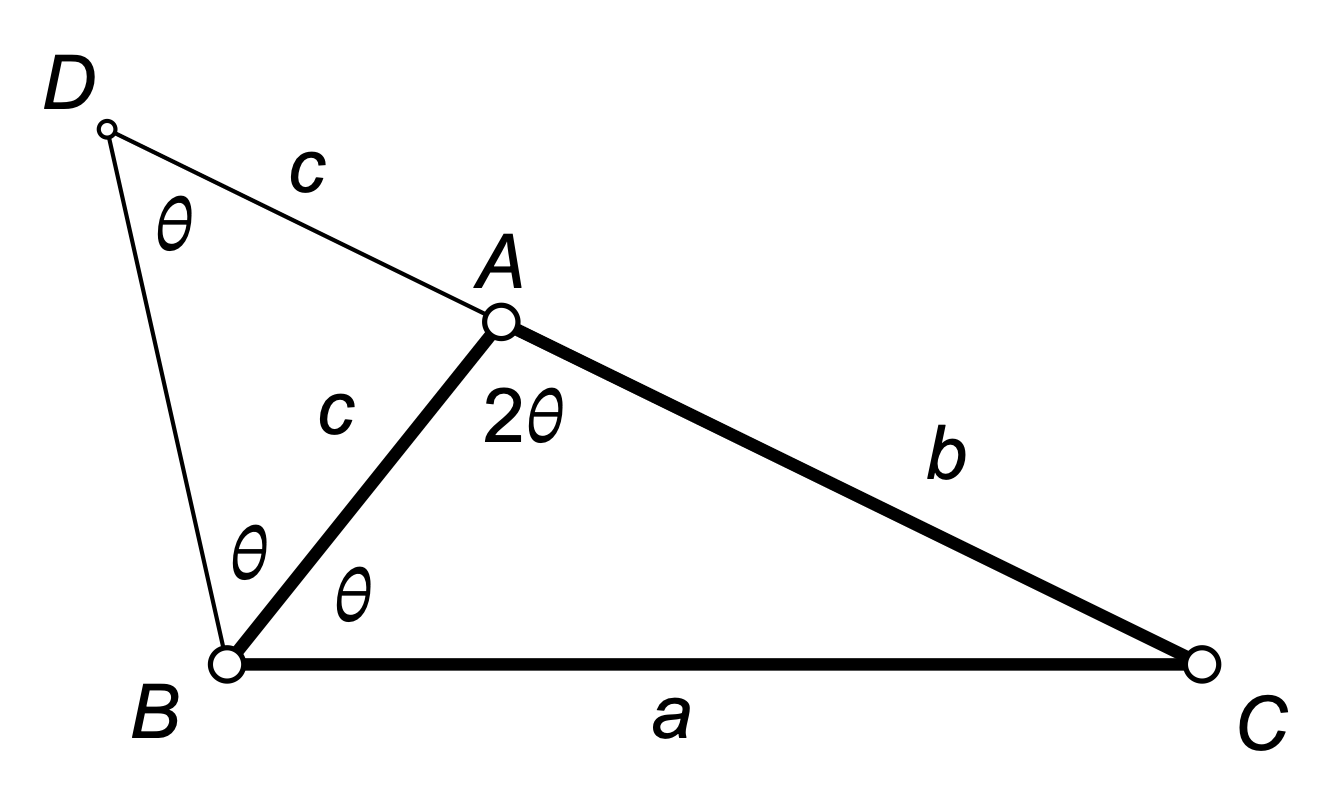}
\label{fig:DoubleAngleLemma}
\end{figure}

By the Exterior Angle Theorem, $\angle BAC=\angle ABD+\angle BDA$.
Thus, $\angle ABD=\angle BDA=\theta$ and $\triangle BAC\sim \triangle DBC$.
Hence $AC/BC=BC/DC$ or $b/a=a/(b+c)$.

\end{proof}

\begin{theorem}
\label{thm:doubleAngle}
Let $ABC$ be a double-angle triangle (named counterclockwise) with $\angle A=2\angle B$
(Figure~\ref{fig:doubleAngle}).
Let $Y_2'$ be the reflection of $Y_2$ about side $BC$.
Then $BY_2$, $CX_2$, and $AY_2'$ are concurrent.
\end{theorem}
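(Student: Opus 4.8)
The plan is to argue in barycentric coordinates, exactly as in the proof of Theorem~\ref{thm:X8X20}. Take $\triangle ABC$ as the reference triangle, so $A=(1:0:0)$, $B=(0:1:0)$, $C=(0:0:1)$, and $X_2=(1:1:1)$. Use the simple coordinates $Y_2=\bigl((a-u)(b-u):u^2:u(a-u)\bigr)$ from display~(\ref{Y1Y2}), and apply Lemma~\ref{lemma:BCreflect} to obtain the reflection $Y_2'=(p':q':r')$ of $Y_2$ about $BC$. Because the third coordinate of $Y_2$ is $u(a-u)$, the sum $p+r$ occurring in Lemma~\ref{lemma:BCreflect} simplifies: $(a-u)(b-u)+u(a-u)=b(a-u)$. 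Hence $r'=(a-u)\bigl((b^2-c^2)(b-u)-a^2b\bigr)$, while $q'=(a-u)(b-u)(c^2-b^2-a^2)-a^2u^2$ and $p'=a^2(a-u)(b-u)$.

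Next I would form the three lines as vector cross products in the manner described after display~(\ref{Y1Y2}). The line $CX_2$ is just $x-y=0$ (it is the $C$-median); the line $BY_2=\texttt{Cross}$$[B,Y_2]$ is proportional to $\bigl(u:0:-(b-u)\bigr)$; and $AY_2'=\texttt{Cross}$$[A,Y_2']$ is $\bigl(0:-r':q'\bigr)$. Feeding these three triples into the determinant criterion for concurrence of \cite[\S4.3]{Yiu} and expanding along the first row, the whole condition collapses to the single equation $(b-u)\,r'=u\,q'$, that is
\[
(a-u)(b-u)\Bigl[(b^2-c^2)(b-u)-a^2b-u\,(c^2-b^2-a^2)\Bigr]+a^2u^3=0.
\]

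Finally I would invoke Equation~(\ref{eq:u}) in the form $a^2u^3=a^2(a-u)(b-u)(c-u)$. This makes $(a-u)(b-u)$ a common factor of the entire left-hand side, and $(a-u)(b-u)\neq 0$ since $u=a$ or $u=b$ would force $u^3=0$ in (\ref{eq:u}), hence $u=0$, a contradiction. Dividing it out, the surviving bracket is $(b^2-c^2)(b-u)-a^2b-u(c^2-b^2-a^2)+a^2(c-u)$, in which every term containing $u$ cancels, leaving $b^3-bc^2-a^2b+a^2c=(b-c)\bigl(b(b+c)-a^2\bigr)$. By the Double Angle Condition (Lemma~\ref{lemma:doubleAngle}), the hypothesis $\angle A=2\angle B$ gives $a^2=b(b+c)$, so this factor vanishes and the three lines are concurrent.

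The step one would normally expect to be the obstacle is the elimination of $u$: in Theorem~\ref{thm:X8X20} this required a Gröbner-basis computation and produced a degree-$17$ polynomial. Here, by contrast, a single substitution from (\ref{eq:u}) peels off the entire $u$-dependence, so the only real care required is the bookkeeping in the cross-product and determinant expansions and the (easy) verification that $(a-u)(b-u)\neq 0$. The counterclockwise orientation hypothesis plays no role beyond fixing which of $Y_1,Y_2$ is called $Y_2$; it does not affect the reflection formula of Lemma~\ref{lemma:BCreflect}.
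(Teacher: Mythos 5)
Your proposal is correct, and it follows the same route as the paper: barycentric coordinates, the reflection formula of Lemma~\ref{lemma:BCreflect} applied to the simple coordinates of $Y_2$ from display~(\ref{Y1Y2}), the determinant criterion for concurrence, and the final appeal to Lemma~\ref{lemma:doubleAngle} via the factor $b(b+c)-a^2$. I checked the algebra: your $Y_2'$ agrees with the paper's (up to the simplification $p+r=b(a-u)$), the three line triples and the reduction of the determinant to $(b-u)r'=uq'$ are right, and the bracket does collapse to $(b-c)\bigl(b(b+c)-a^2\bigr)$ after all $u$-terms cancel. The one place you genuinely depart from the paper is the elimination of $u$: the paper feeds its concurrence polynomial and Equation~(\ref{eq:u}) to Mathematica's \texttt{Eliminate} and reports the output $ab(b-c)(a^2-b(b+c))=0$, whereas you perform the elimination by hand with the single substitution $a^2u^3=a^2(a-u)(b-u)(c-u)$, which makes $(a-u)(b-u)$ a visible common factor (and your justification that this factor is nonzero is fine, consistent with the paper's remark that $0<u<a$). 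That substitution is exactly the kind of human-checkable step the authors say they could not find for Theorem~\ref{thm:X8X20}, so your version is, if anything, an improvement on the published proof of this particular theorem.
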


\begin{figure}[h!t]
\centering
\includegraphics[width=0.6\linewidth]{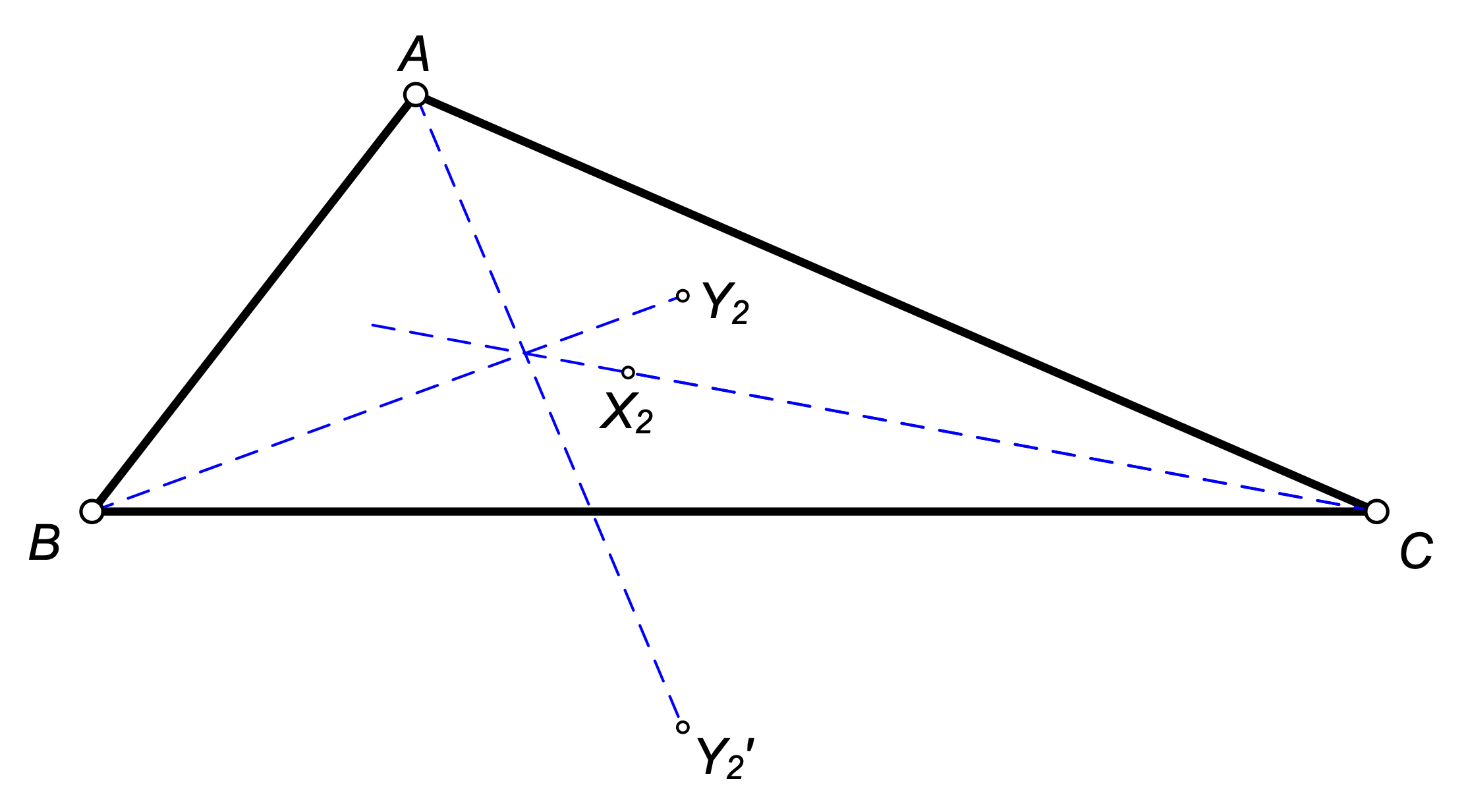}
\caption{dashed lines are concurrent}
\label{fig:doubleAngle}
\end{figure}

\begin{proof}
From Lemma~\ref{lemma:BCreflect}, we find that the coordinates of $Y_2'$ are
$$\Bigl(a^2 (a - u) (b - u): (-b^2 + c^2) (a - u) (b - u) + a^2 ((b - 2 u) u + a (-b + u))$$
$$: (-a^2 b + (b^2 - c^2) (b - u)) (a - u)\Bigr).$$
We can therefore write down the equations for the three lines $AY_2'$, $BY_2$, and $CX_2$.
The determinant condition for these three lines to be concurrent simplifies to
$$(a - u) (-a b (b^2 - c^2) (b - u) + a^3 (b - u)^2 + 
   b (b^2 - c^2) (b - u) u - a^2 u (b^2 - 2 b u + 2 u^2))=0.$$
Eliminating variable $u$ between this equation and Equation~(\ref{eq:u}) gives
$$a b (b - c) (a^2 - b (b + c)) = 0.$$
Thus, the three lines are concurrent if $a^2=b(b+c)$.
The theorem now follows from Lemma~\ref{lemma:doubleAngle}.
\end{proof}

\void{
\begin{open}
Is there an analog of Theorem~\ref{thm:doubleAngle} involving the point $Y_1$ instead of $Y_2$?
\end{open}
}

\section{Isosceles Triangles}

\begin{theorem}
Let $Y$ be one of the Yff points of $\triangle ABC$.
Let the areas of triangles $BCY$, $CAY$, and $ABY$ be $K_a$, $K_b$, and $K_c$,
respectively. Then $K_bK_c=K_a^2$ if and only if $\triangle ABC$ is isosceles with $AB=AC$.
\end{theorem}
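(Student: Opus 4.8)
Here is how I would approach the final theorem.

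The plan is to convert the statement about areas into a statement about barycentric coordinates and then play it off against the defining equation~(\ref{eq:u}) of $u$. The starting point is the standard fact that if a point $Y$ lies in the interior of $\triangle ABC$ and has barycentric coordinates $(x:y:z)$, then the cevians from the vertices through $Y$ cut the triangle into the three pieces $BCY$, $CAY$, $ABY$ whose (unsigned) areas are proportional to $x$, $y$, $z$ respectively, so that $K_a:K_b:K_c=x:y:z$. Both Yff points are interior points, since each is the common intersection of three cevians running from a vertex to the opposite side (Theorem~\ref{thm:yff}); this forces $0<u<\min(a,b,c)$, a fact also recorded in~\cite{Yff}, and it is what guarantees that the positive representative of the barycentric coordinates is the one governing the unsigned areas. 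Because the relation $K_bK_c=K_a^2$ is homogeneous of degree zero in $(x:y:z)$, it is equivalent to the projectively invariant condition $x^2=yz$, so I may substitute the simple coordinates from~(\ref{Y1Y2}) directly, without normalising.

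First I would take $Y=Y_1=(u^2:(a-u)(b-u):u(b-u))$. Here $x^2=yz$ reads $u^4=u(a-u)(b-u)^2$, that is $u^3=(a-u)(b-u)^2$ after dividing by $u\neq0$. Substituting $u^3=(a-u)(b-u)(c-u)$ from~(\ref{eq:u}) turns this into $(a-u)(b-u)(c-u)=(a-u)(b-u)^2$. The factors $a-u$ and $b-u$ are nonzero — if either vanished, the right-hand side of~(\ref{eq:u}) would be $0$, forcing $u=0$ — so cancelling them leaves $c-u=b-u$, i.e.\ $b=c$. For $Y=Y_2=((a-u)(b-u):u^2:u(a-u))$ the analogous computation gives $(a-u)^2(b-u)^2=u^3(a-u)$, hence $(a-u)(b-u)^2=u^3$, and invoking~(\ref{eq:u}) once more reduces it to exactly the same equation $(a-u)(b-u)^2=(a-u)(b-u)(c-u)$, and therefore again to $b=c$.

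It then remains to read off the equivalence: in either case $K_bK_c=K_a^2\iff b=c$, and $b=c$ says $CA=AB$, which is precisely the assertion that $\triangle ABC$ is isosceles with $AB=AC$. The converse direction is automatic, since if $b=c$ then $(a-u)(b-u)^2=(a-u)(b-u)(c-u)=u^3$ by~(\ref{eq:u}), so $x^2=yz$ holds for both $Y_1$ and $Y_2$. The step I expect to need the most care is the very first one: justifying that the unsigned areas are proportional to a \emph{positive} representative of the barycentric coordinates, which relies on the Yff points being interior to the triangle, i.e.\ on $0<u<\min(a,b,c)$. Once that is granted, the argument is completely elementary and, unlike the other results in this paper, requires no elimination of $u$ or Gröbner basis computation at all.
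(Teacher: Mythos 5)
Your proof is correct, and it takes a genuinely different route from the paper's at the decisive step. Both arguments begin identically: the unsigned areas of $BCY$, $CAY$, $ABY$ are proportional to the (positive) barycentric coordinates of the interior point $Y$, so with the simple coordinates from display~(\ref{Y1Y2}) the condition $K_bK_c=K_a^2$ for $Y_1$ becomes $u(a-u)(b-u)^2=u^4$. At that point the paper treats $u$ as an independent variable and eliminates it against Equation~(\ref{eq:u}) by Gr\"obner-basis methods, obtaining $a^4b^4(b-c)c=0$ and reading off $b=c$; you instead divide by $u$, substitute $u^3=(a-u)(b-u)(c-u)$ directly, and cancel the nonzero factors $a-u$ and $b-u$ to get $c-u=b-u$ immediately. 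Your substitution-and-cancellation argument is more elementary (no computer algebra at all), makes the ``if and only if'' transparent in both directions -- in particular the converse, which the paper leaves implicit in the phrase ``true if and only if $b=c$'' -- and correctly isolates the one point needing care, namely that $0<u<\min(a,b,c)$ guarantees both the positivity of the coordinate representative and the nonvanishing of the cancelled factors. The treatment of $Y_2$, reducing to the same equation $(a-u)(b-u)^2=u^3$, matches the paper's ``the proof is similar'' but actually carries it out. This is a clean improvement on the printed proof.
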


\begin{figure}[h!t]
\centering
\includegraphics[width=0.4\linewidth]{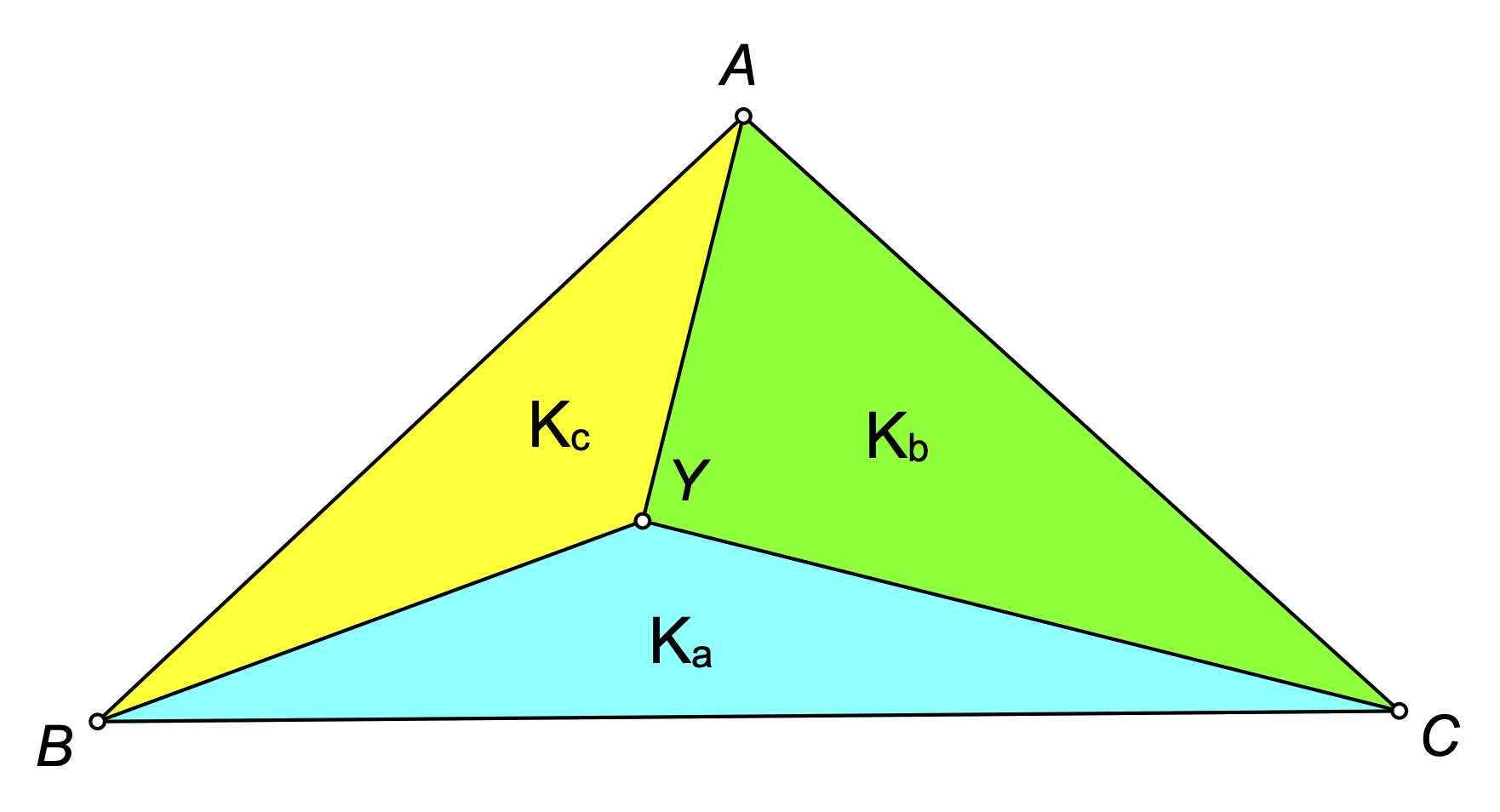}
\label{fig:isosceles}
\end{figure}

\begin{proof}
Start with $Y=Y_1$.
The areas of these triangles are proportional to the barycentric coordinates of $Y_1$.
Then from Equation~(\ref{Y1Y2}), without loss of generality, we have
$K_a=u^2$, $K_b=(a-u)(b-u)$, and $K_c=(b-u)u$.
Thus, $K_bK_c=K_a^2$ is equivalent to
\begin{equation}
\label{eq:isos}
(a - u) u (-b + u)^2 = u^4.
\end{equation}
Eliminating $u$ from Equations (\ref{eq:isos}) and (\ref{eq:u}), this equation becomes
$a^4 b^4 (b - c) c = 0$.
This condition is true if and only if $b=c$.
The proof when $Y=Y_2$ is similar.
\end{proof}

\section{Isosceles Right Triangles}

The following two results were found by computer.
We omit the complicated proofs.

\begin{theorem}
Let $ABC$ be an isosceles right triangle (named counterclockwise) with right angle at $A$.
Let $X_1Y_2$ meet $BC$ at $D$ (Figure~\ref{fig:isoscelesRight}).
Then $\angle DY_2C=\angle CBX_8$.
\end{theorem}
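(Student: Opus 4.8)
The plan is to follow the coordinate-and-elimination method used throughout this section. Set up a barycentric coordinate system on $\triangle ABC$, so that $A=(1:0:0)$, $B=(0:1:0)$, and $C=(0:0:1)$. Take $X_1=(a:b:c)$ and the coordinates of $X_8$ from \cite{ETC}, and take $Y_2=\bigl((a-u)(b-u):u^2:u(a-u)\bigr)$ from display (\ref{Y1Y2}). The line $BC$ is $(1:0:0)$, the line $X_1Y_2$ is $\texttt{Cross}[X_1,Y_2]$, and hence $D=\texttt{Cross}\bigl[\texttt{Cross}[X_1,Y_2],(1:0:0)\bigr]$, an explicit triple in $a$, $b$, $c$, $u$.

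It then remains to compare the angles $\angle DY_2C$ and $\angle CBX_8$. For this I would use the barycentric form of the Euclidean inner product (see \cite[\S7.1]{Yiu}): for displacement vectors $\vec p=(p_1,p_2,p_3)$ and $\vec q=(q_1,q_2,q_3)$ with $p_1+p_2+p_3=q_1+q_2+q_3=0$ one has $\vec p\cdot\vec q=S_A\,p_1q_1+S_B\,p_2q_2+S_C\,p_3q_3$, where $2S_A=b^2+c^2-a^2$, $2S_B=c^2+a^2-b^2$, and $2S_C=a^2+b^2-c^2$; while the signed-area form $\vec p\times\vec q$ equals, up to the common nonzero factor $2[ABC]$, the $3\times3$ determinant built from the normalized coordinates of the two endpoints and the apex. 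Dividing the second by the first expresses $\tan(\angle DY_2C)$ and $\tan(\angle CBX_8)$ as explicit rational functions of $a$, $b$, $c$, $u$ (in the latter, $u$ does not occur).

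Equating the two tangents and clearing denominators produces a polynomial identity $P(a,b,c,u)=0$ to be checked. As in the proof of Theorem~\ref{thm:X8X20}, I would eliminate $u$ between $P$ and Equation~(\ref{eq:u}) using \texttt{Eliminate} (or a Gr\"obner basis), factor the result, and verify that one factor vanishes under the isosceles-right conditions $b=c$ and $a^2=b^2+c^2$ (equivalently $a^2=2b^2$). Since a tangent determines an angle only modulo~$\pi$, I would finish with one numerical instance --- for example $a=\sqrt2$, $b=c=1$, which gives $u\approx0.5546$ and, in the Cartesian model $A=(0,0)$, $B=(1,0)$, $C=(0,1)$, the points $Y_2\approx(0.263,0.408)$ and $D\approx(0.152,0.848)$ --- to confirm that both $\angle DY_2C$ and $\angle CBX_8$ are the acute angles drawn in Figure~\ref{fig:isoscelesRight} (each about $9.74\degrees$, with common tangent $3-2\sqrt2$), so that equality of tangents really is equality of angles.

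The step I expect to be the main obstacle is the interaction between an \emph{angle} condition and the cube-root nature of $u$: the tangent formulas are quotients, so clearing denominators inflates the degree considerably, and one must keep $u$ symbolic --- using only the relation (\ref{eq:u}) --- rather than substituting its unwieldy radical form. A workable alternative, which avoids the barycentric metric formula altogether, is to carry out the entire computation in the Cartesian model above, at the cost of propagating $\sqrt2$ through the elimination; either way, the concluding verification is routine but bulky symbolic algebra.
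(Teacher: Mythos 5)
The paper gives no proof of this theorem --- it is one of the two results for which we state ``we omit the complicated proofs'' --- so your proposal cannot be compared against an argument of ours; it can only be judged as a plan, and as a plan it is sound and entirely in the spirit of the computational method used elsewhere in the paper. Your ingredients are all correct: the simple coordinates for $Y_2$ from display (\ref{Y1Y2}), $D$ as \texttt{Cross}$[\texttt{Cross}[X_1,Y_2],(1:0:0)]$, the inner product $S_Ap_1q_1+S_Bp_2q_2+S_Cp_3q_3$ for normalized displacement vectors, and the determinant for twice the signed area, so that each angle's tangent is a rational function of $a$, $b$, $c$, $u$. Your numerical sanity check is also right: with $a=\sqrt2$, $b=c=1$ one gets $u\approx0.5546$, $Y_2\approx(0.263,0.408)$, $D\approx(0.152,0.849)$, and both angles come out to about $9.74\degrees$ with tangent $3-2\sqrt2$; the final step of confirming that equal tangents give equal angles (not supplementary mod $\pi$) is a point worth making explicit and you make it.

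One caution, which applies equally to the elimination arguments throughout the paper but is worth naming since you are relying on it: eliminating $u$ between your polynomial $P(a,b,c,u)=0$ and Equation~(\ref{eq:u}) produces a resultant that vanishes precisely when \emph{some} root of the cubic (possibly a non-real one) satisfies $P=0$. To conclude the theorem you need the \emph{real} root $u\in(0,a)$ to be the one that works. In practice this is settled by factoring the pre-elimination condition $P(a,b,c,u)$ after substituting $b=c$ and $a^2=2b^2$ and checking that the relevant factor is a multiple of $u^3-(a-u)(b-u)(c-u)$ itself (or by a numerical check such as the one you already perform, promoted from a sanity check to part of the argument). With that point addressed, your plan is a legitimate proof; the only thing missing is the actual symbolic computation, which --- as we found --- is bulky but routine.
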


\begin{figure}[h!t]
\centering
\includegraphics[width=0.4\linewidth]{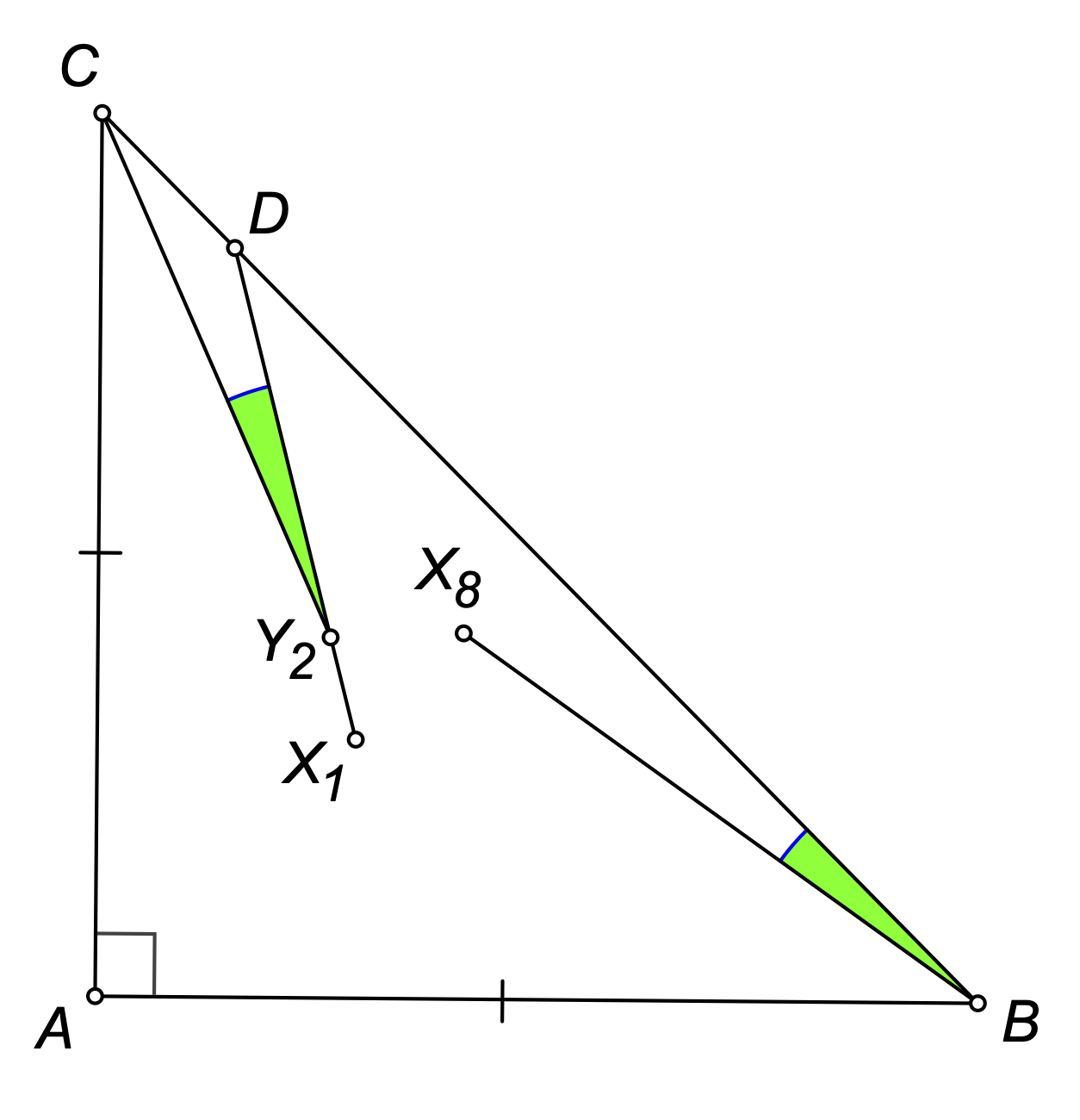}
\caption{green angles are equal}
\label{fig:isoscelesRight}
\end{figure}

\newpage

\begin{theorem}
Let $ABC$ be an isosceles right triangle (named counterclockwise) with right angle at $A$.
Then $\angle Y_2X_1C=\angle Y_2CX_9$ (Figure~\ref{fig:Mittenpunkt}).
\end{theorem}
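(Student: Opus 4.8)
The plan is to follow the same barycentric-coordinate strategy used throughout the paper, specializing to the isosceles right triangle. First I would set up $\triangle ABC$ with the right angle at $A$ and $AB=AC$; by the Law of Cosines (or directly) this means $b=c$ and $a=b\sqrt2$, so in all symbolic computations I can substitute $a^2=2b^2$ (keeping $b$ as a free parameter, or even normalizing $b=c=1$, $a=\sqrt2$). I would record the barycentric coordinates of the relevant centers: $X_1=(a:b:c)$, and $X_9$ (the Mittenpunkt) $=(a(b+c-a):b(c+a-b):c(a+b-c))=(b(2b-a):b(a):b(a))$ after substituting $b=c$, i.e. $X_9=(2b-a:a:a)$ up to scaling. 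For $Y_2$ I would use the simple coordinates from display~(\ref{Y1Y2}), namely $Y_2=\bigl((a-u)(b-u):u^2:u(a-u)\bigr)$, treating $u$ as an unevaluated variable satisfying Equation~(\ref{eq:u}), which with $b=c$ and $a^2=2b^2$ becomes a specific cubic relating $u$ and $b$.

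Next I would translate the angle equality $\angle Y_2X_1C=\angle Y_2CX_9$ into an algebraic condition. Since an angle at a vertex $V$ between two lines $\ell_1,\ell_2$ through $V$ can be handled via the directed-angle / slope formula in barycentrics (using the displacement vectors from $V$ and the standard barycentric inner product with Conway parameters $S_A,S_B,S_C$, which for our triangle take explicit values $S_A=0$, $S_B=S_C=b^2$ since $\angle A=90\degrees$), I would write $\tan\angle Y_2X_1C$ and $\tan\angle Y_2CX_9$ each as a ratio of a "cross-product-like" bracket to a "dot-product-like" bracket, clear denominators, and obtain a polynomial equation $P(u,b)=0$ that is equivalent to the desired angle equality. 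The degenerate possibility that the two angles are supplementary rather than equal can be excluded by a numerical check at one sample triangle, exactly as the paper does elsewhere (e.g. in Theorem~\ref{thm:vertex} where the range $0<u<a$ is invoked to discard a spurious factor).

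Then, exactly as in the proofs of Theorems~\ref{thm:X8X20} and~\ref{thm:doubleAngle}, I would eliminate $u$ between $P(u,b)=0$ and Equation~(\ref{eq:u}) (with $a^2=2b^2$, $c=b$ already imposed) using \texttt{Eliminate} / a Gr\"obner basis computation in Mathematica, and check that the resulting polynomial in $b$ alone vanishes identically — or factors so that the only surviving factor is automatically zero for the isosceles right triangle. If I prefer to keep $a,b,c$ symbolic until the end, I would instead eliminate $u$ first to get a polynomial in $a,b,c$, \texttt{Factor} it, and verify that one factor is $(b-c)$ times (a factor vanishing when $a^2=b^2+c^2$), so that imposing both $b=c$ and $a^2=2b^2$ kills it; this mirrors how the $a^2-b^2+c^2$ factor appeared in Theorem~\ref{thm:X8X20}.

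The main obstacle I anticipate is purely computational: setting up the angle-equality condition correctly in barycentric coordinates (getting the Conway-parameter inner product and the orientation/sign conventions right so that one really captures $\angle Y_2X_1C=\angle Y_2CX_9$ and not its supplement), and then carrying out the elimination of $u$ against the cubic~(\ref{eq:u}) — the intermediate polynomials will be large, as the authors repeatedly note. There is no conceptual difficulty beyond that; once the final polynomial is produced and shown to vanish under the substitution $b=c$, $a^2=2b^2$, the theorem follows. As with the companion results, a genuinely synthetic proof is not attempted here, and it would be natural to append an \emph{Open Question} asking for one.
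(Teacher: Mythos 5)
The paper itself omits the proof of this theorem (the authors state only that it ``was found by computer'' and that the complicated proof is omitted), so there is no argument of record to compare against; your plan is exactly the kind of barycentric computation the authors use for the theorems whose proofs they do include (set up coordinates, express the geometric condition as a polynomial in $a,b,c,u$, eliminate $u$ against Equation~(\ref{eq:u}), and factor), and the ingredients you list --- the simple coordinates for $Y_2$, the Conway parameters with $S_A=0$, $S_B=S_C=b^2$, the tangent formula for the angle between two lines, and the caveat about equal versus supplementary angles --- are the right ones. So the approach is sound, with the usual proviso that a plan of this kind is only a proof once the elimination is actually carried out and the resulting polynomial is seen to vanish under $b=c$, $a^2=2b^2$.

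One concrete slip: your specialization of the Mittenpunkt is wrong. From $X_9=\bigl(a(b+c-a):b(c+a-b):c(a+b-c)\bigr)$, setting $b=c$ gives $\bigl(a(2b-a):ab:ab\bigr)$, i.e. $(2b-a:b:b)$ up to scaling, not $(2b-a:a:a)$ as you wrote (you replaced the leading factor $a$ by $b$ in the first coordinate). Both points happen to lie on the symmetry axis $y=z$, but they are distinct points on it, and the ray $CX_9$ --- hence the angle $\angle Y_2CX_9$ --- depends on which one you use, so carrying the incorrect form into the computation would derail the verification. Since you quote the correct general coordinates first, the fix is trivial, but it should be made before the Mathematica run.
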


\begin{figure}[h!t]
\centering
\includegraphics[width=0.3\linewidth]{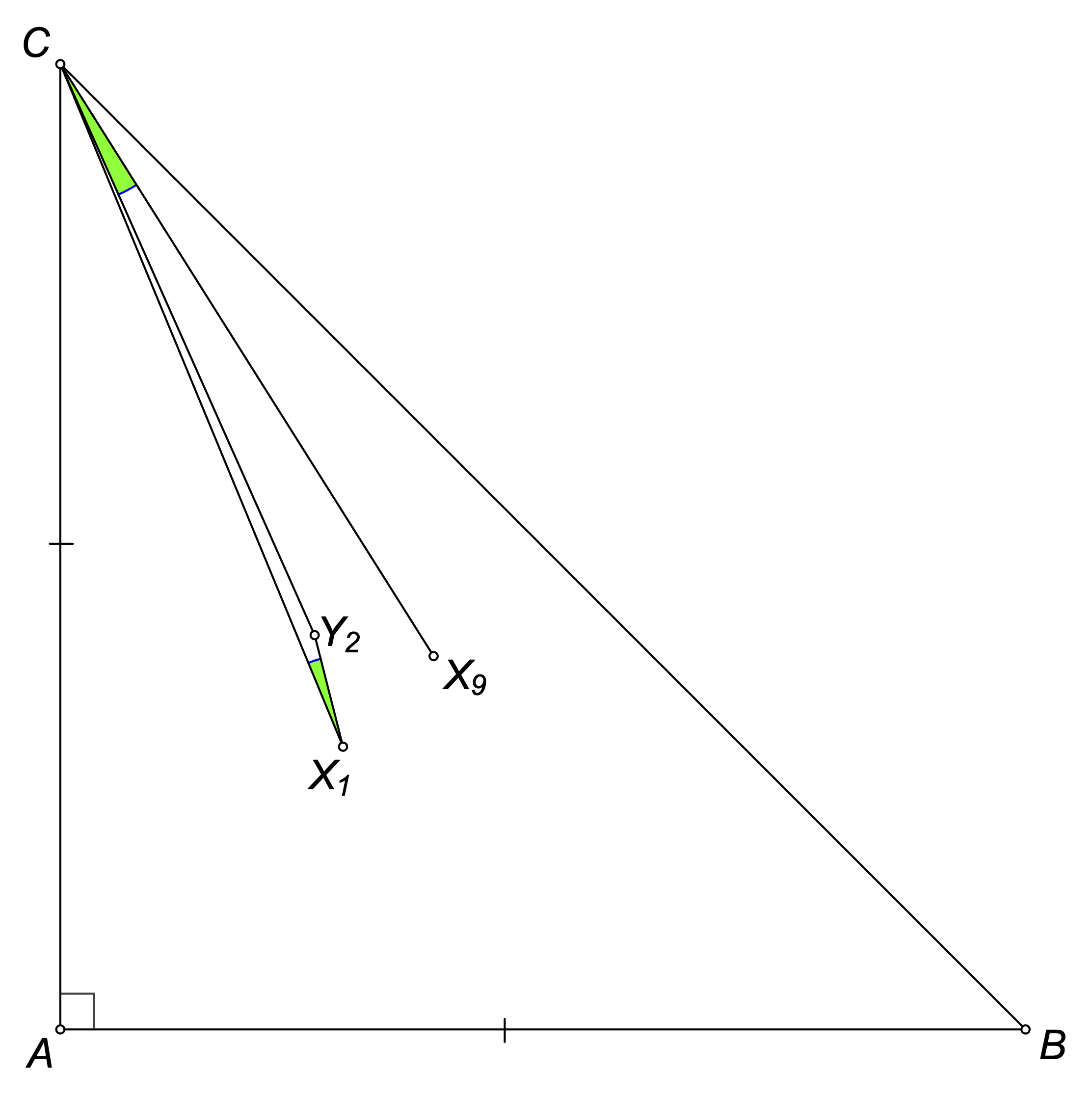}
\caption{green angles are equal}
\label{fig:Mittenpunkt}
\end{figure}

\section{Special Triangles}

\begin{theorem}
\label{thm:Nagel}
In $\triangle ABC$ (named counterclockwise), $BY_1$ passes through $X_8$
if and only if $$a^2(a-b-c)=bc(b+c-3a).$$
\end{theorem}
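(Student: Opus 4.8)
The plan is to follow the barycentric-coordinate recipe used in Theorems~\ref{thm:X8X20} through~\ref{thm:AP}. I would set up a barycentric coordinate system on $\triangle ABC$, so that $B=(0:1:0)$ and, from \cite{ETC}, $X_8=(a-b-c:-a+b-c:-a-b+c)$. For $Y_1$ I would use the simple coordinates of display~(\ref{Y1Y2}), namely $Y_1=(u^2:(a-u)(b-u):u(b-u))$, to avoid cube roots. Then $BY_1$ passes through $X_8$ exactly when the $3\times3$ determinant with rows $B$, $X_8$, and $Y_1$ vanishes.

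Expanding that determinant along the row for $B$ collapses it to an expression that is only quadratic in $u$, and a one-line calculation should show it equals $-b\,u\,(2u+a-b-c)$. Since $b>0$ and $u>0$, the vanishing condition is equivalent to $u=\tfrac12(b+c-a)$. Next I would eliminate $u$ between this relation and Equation~(\ref{eq:u}); because the relation is linear in $u$, the elimination is mere substitution. Using $a-u=\tfrac12(3a-b-c)$, $b-u=\tfrac12(a+b-c)$, and $c-u=\tfrac12(a-b+c)$, Equation~(\ref{eq:u}) turns into $(b+c-a)^3=(3a-b-c)(a+b-c)(a-b+c)$, and expanding both sides shows their difference is $4\bigl(a^2(b+c-a)+bc(b+c-3a)\bigr)$. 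Hence the condition for collinearity is precisely $a^2(a-b-c)=bc(b+c-3a)$; note that the elimination yields this single factor with no extraneous ones, which is what makes the ``if and only if'' clean.

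For the converse I would run the chain backwards. If $a^2(a-b-c)=bc(b+c-3a)$, then $x:=\tfrac12(b+c-a)$ satisfies $x^3=(a-x)(b-x)(c-x)$; moreover $x>0$ by the triangle inequality, and since the left side of the assumed identity is negative the identity forces $b+c<3a$, so $x<a$, while $x<b$ and $x<c$ are automatic. Thus $x$ lies in the interval $(0,\min(a,b,c))$ that contains the Yff value $u$, so by uniqueness of $u$ there (\cite{Yff}) we get $x=u$; then $2u+a-b-c=0$, the determinant vanishes, and $B$, $Y_1$, $X_8$ are collinear.

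I expect no serious obstacle, since all the algebra is light. The two points needing care are checking that the collinearity determinant really collapses to $-b\,u\,(2u+a-b-c)$, rather than expanding into something like the degree-$17$ polynomial encountered in Theorem~\ref{thm:X8X20}, and, in the converse, making sure that $\tfrac12(b+c-a)$ is the Yff root $u$ and not some other root of the cubic in~(\ref{eq:u}) --- both handled by the sign and interval bookkeeping above.
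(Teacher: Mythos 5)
Your proof is correct, and I verified the key computation: the determinant with rows $B=(0:1:0)$, $X_8$, and the simple coordinates of $Y_1$ does collapse to $-bu\,(2u+a-b-c)$, and the substitution $u=\tfrac12(b+c-a)$ into Equation~(\ref{eq:u}) does produce exactly $a^2(a-b-c)=bc(b+c-3a)$ with no extraneous factors. The route differs from the paper's in how the pivotal condition $u=s-a$ is obtained: the paper gets it synthetically, observing that the Nagel cevian $BX_8$ meets $AC$ at the point $E$ with $CE=s-a$ while the Yff cevian $BY_1$ meets $AC$ at the point with $CE_1=u$ (directly from Figure~\ref{fig:YffPoints}), so collinearity is immediate from both cevians sharing a foot; you get the same condition by brute-force factoring of the collinearity determinant. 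The paper's version buys geometric transparency (it explains \emph{why} the condition is linear in $u$), while yours buys self-containedness --- it needs no external fact about the Nagel point, it makes the elimination of $u$ an explicit hand-checkable substitution rather than a black-box \texttt{Eliminate} call, and your converse is more careful than the paper's in verifying that the root $\tfrac12(b+c-a)$ of the cubic lies in $(0,\min(a,b,c))$ and hence coincides with the Yff value $u$, a point the paper leaves implicit.
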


Figure~\ref{fig:B-Y1-X8} shows an example.

\begin{figure}[h!t]
\centering
\includegraphics[width=0.4\linewidth]{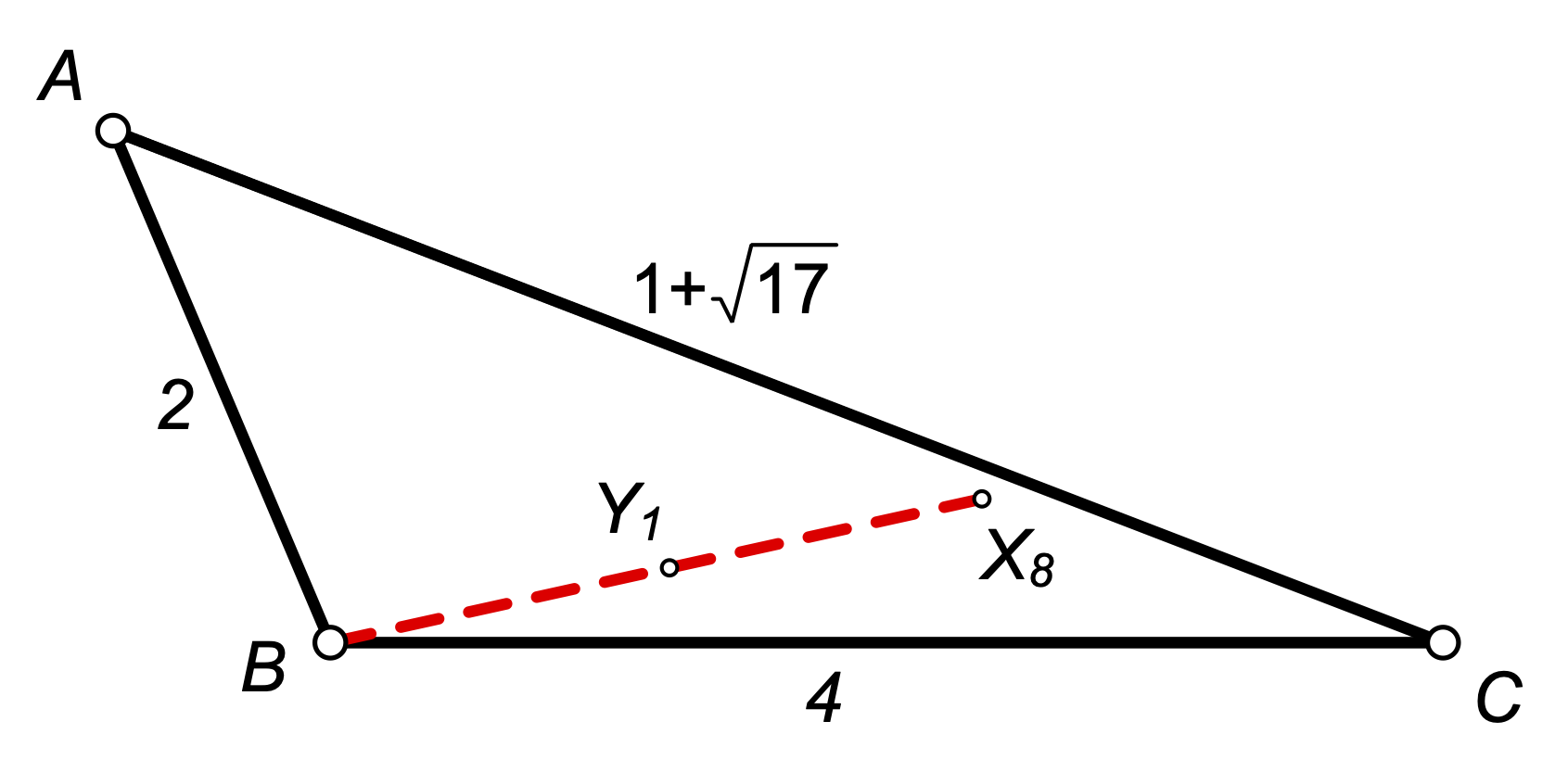}
\caption{$B$, $Y_1$, and $X_8$ are collinear}
\label{fig:B-Y1-X8}
\end{figure}

\begin{proof}
Let $BX_8$ meet $AC$ at $E$.
Then by a well-known property of the Nagel point, $CE=s-a$
where $s$ is the semiperimter of the triangle \cite[\S 3.2.2]{Yiu}.
\begin{figure}[h!t]
\centering
\includegraphics[width=0.4\linewidth]{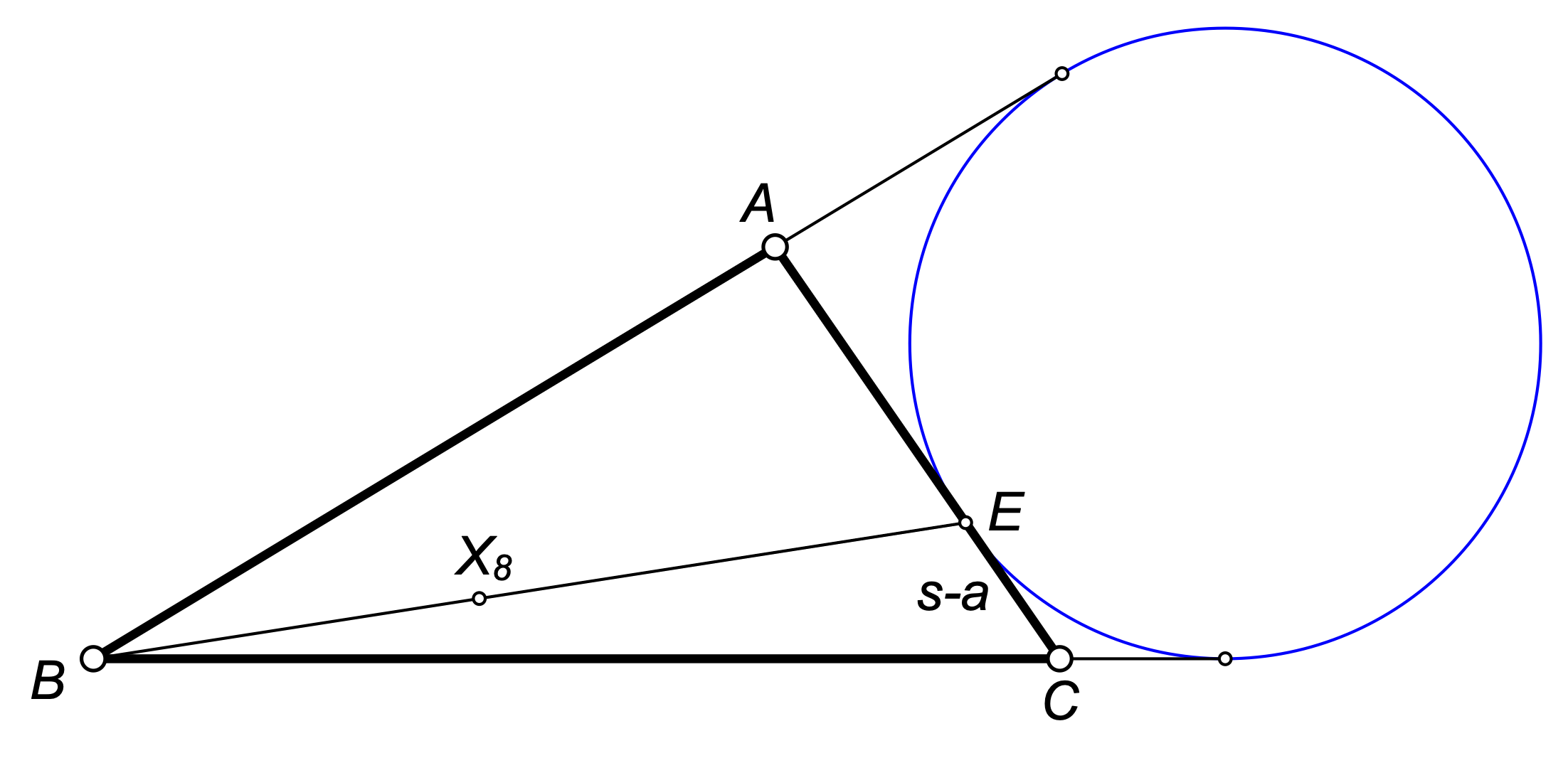}
\label{fig:B-Y1-X8proof}
\end{figure}

Then $Y_1$ lies on $BE$, if and only if $s-a=u$. Eliminating $u$ between this condition and
Equation~(\ref{eq:u}) gives the condition to be
$a^2(a-b-c)=bc(b+c-3a)$
as required.
\end{proof}

In the same manner, we get the following theorem.

\begin{theorem}
\label{thm:Gergonne}
In $\triangle ABC$ (named counterclockwise), $CY_1$ passes through $X_7$
if and only if $$a^2(a-b-c)=bc(b+c-3a).$$
\end{theorem}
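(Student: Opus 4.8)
The plan is to follow the same route as the proof of Theorem~\ref{thm:Nagel}, simply replacing the Nagel point by the Gergonne point. First I would recall the standard description of the Gergonne point: the cevian from $C$ through $X_7$ meets the opposite side $AB$ at the point $F$ where the incircle touches $AB$, and this contact point satisfies $AF = s-a$ (equivalently $BF = s-b$), where $s$ is the semiperimeter. This is the exact companion of the fact about $X_8$ used in Theorem~\ref{thm:Nagel}, and can be quoted from the same reference.

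Next I would invoke the definition of $Y_1$ from Figure~\ref{fig:YffPoints}: the cevian $CY_1$ is the line $CF_1$, where $F_1$ lies on side $AB$ with $AF_1 = u$. Since $CY_1$ and $CX_7$ are both cevians from $C$ to side $AB$, the line $CY_1$ passes through $X_7$ if and only if $F_1 = F$, i.e. if and only if $u = s-a$, i.e. $2u = b+c-a$. Observe that this is \emph{exactly} the condition $u = s-a$ that arose in Theorem~\ref{thm:Nagel}, so $CY_1$ passes through $X_7$ precisely when $BY_1$ passes through $X_8$; one could even deduce the stated condition instantly by citing Theorem~\ref{thm:Nagel}.

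Finally I would eliminate $u$ between $2u = b+c-a$ and Equation~(\ref{eq:u}). Substituting $u = (b+c-a)/2$ turns (\ref{eq:u}) into $(b+c-a)^3 = (3a-b-c)(a+b-c)(a-b+c)$, and expanding and collecting terms reduces this to $a^2(a-b-c) = bc(b+c-3a)$, which is the asserted condition. The only real obstacle is bookkeeping: identifying the correct contact point (the one on $AB$, at distance $s-a$ from $A$, so as to match $AF_1 = u$) and keeping the cyclic labelling of vertices straight. Once that is pinned down, the elimination is a short and routine computation, identical in form to the one already carried out for Theorem~\ref{thm:Nagel}, so no further difficulty is expected.
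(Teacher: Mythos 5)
Your proposal is correct and is exactly the argument the paper intends: it reduces the statement to the condition $u=s-a$ via the contact point $AF=s-a$ of the incircle on $AB$, matching $AF_1=u$, and then eliminates $u$ against Equation~(\ref{eq:u}) just as in the proof of Theorem~\ref{thm:Nagel} (the paper itself only says ``in the same manner''). Your added observation that the condition coincides with that of Theorem~\ref{thm:Nagel}, so the two theorems hold for precisely the same triangles, is a nice bonus.
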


\section{Cevian Triangles}

Let $[XYZ]$ denote the area of $\triangle XYZ$.

\begin{lemma}[The Common Angle Theorem]
In $\triangle ABC$, let $D$ be a point on $BC$ and let $E$ be a point on $AC$ (Figure~\ref{fig:commonAngle}).
Then $$[CDE]=\frac{CE\cdot CD}{CA\cdot CB}[ABC].$$
\end{lemma}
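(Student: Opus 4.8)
The plan is to exploit the fact that $\triangle CDE$ and $\triangle CBA$ share the angle at vertex $C$, so the standard "side--angle--side" area formula reduces the ratio of areas to a ratio of products of side lengths. First I would write $\gamma = \angle ACB$ and observe that, since $D$ lies on segment $BC$ and $E$ lies on segment $AC$, the angle $\angle DCE$ is exactly the same angle $\gamma$ (the rays $CD$ and $CE$ coincide with the rays $CB$ and $CA$). Next I would invoke the formula expressing the area of a triangle in terms of two sides and the included angle: $[CDE] = \tfrac12\, CD \cdot CE \cdot \sin\gamma$ and likewise $[ABC] = [CBA] = \tfrac12\, CB \cdot CA \cdot \sin\gamma$. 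Dividing the first equation by the second, the factors $\tfrac12$ and $\sin\gamma$ cancel, leaving
$$\frac{[CDE]}{[ABC]} = \frac{CD \cdot CE}{CB \cdot CA},$$
which rearranges to the claimed identity.

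The only point requiring a word of care is the cancellation of $\sin\gamma$: this is legitimate because $\triangle ABC$ is a genuine (nondegenerate) triangle, so $0 < \gamma < \pi$ and $\sin\gamma \neq 0$; the degenerate cases $D = C$ or $E = C$ simply make both sides zero and are covered by continuity (or trivially). There is essentially no obstacle here — the result is immediate once the shared-angle observation is made — so the "hard part" is merely stating the included-angle area formula and noting that $\angle DCE = \angle BCA$ because $D$ and $E$ lie on the sides emanating from $C$. A brief alternative, if one prefers to avoid trigonometry, is to drop perpendiculars: if $E'$ and $A'$ are the feet of the perpendiculars from $E$ and $A$ to line $BC$, then $EE'/AA' = CE/CA$ by similar triangles, and $[CDE] = \tfrac12\, CD \cdot EE'$ while $[CBA] = \tfrac12\, CB \cdot AA'$, giving the same ratio after multiplying through. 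I would present the trigonometric version as the main argument since it is the shortest.
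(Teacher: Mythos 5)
Your proof is correct and follows exactly the same route as the paper's: both use the included-angle area formulas $[CDE]=\tfrac12\,CD\cdot CE\sin C$ and $[ABC]=\tfrac12\,CA\cdot CB\sin C$ and divide. Your extra remarks on nondegeneracy and the perpendicular-foot alternative are fine but not needed.
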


\begin{figure}[h!t]
\centering
\includegraphics[width=0.28\linewidth]{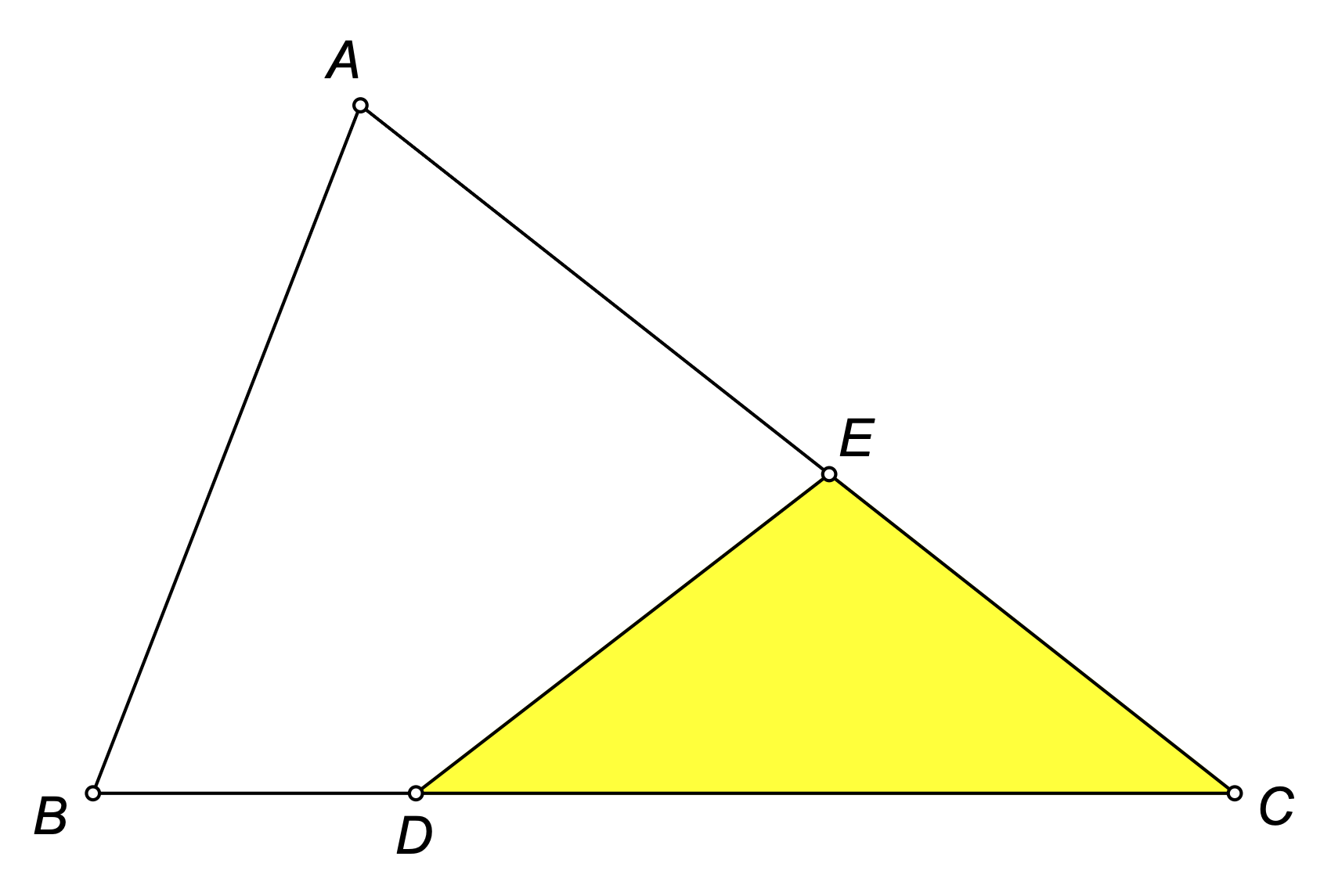}
\caption{}
\label{fig:commonAngle}
\end{figure}

\begin{proof}
This follows from the area formulas $[CDE]=\frac12 CD\cdot CE \sin C$ and
$[ABC]=\frac12 CA\cdot CB \sin C$
\end{proof}


\begin{theorem}[Area of Cevian Triangle]
\label{thm:area}
Let $P$ be a point inside $\triangle ABC$. Let $AP$ meet $BC$ at $D$,
$BP$ meet $CA$ at $E$ and $CP$ meet $AB$ at $F$.
Suppose $[ABC]=K$, $AB=c$, $BC=a$, $CA=b$, $BD=a_1$, $DC=a_2$, $CE=b_1$, $EA=b_2$, $AF=c_1$, and $FB=c_2$ as shown in
Figure~\ref{fig:cevianTriangle}.
Then $$[DEF]=\left(1-\frac{c_1b_2}{bc}-\frac{a_1c_2}{ca}-\frac{b_1a_2}{ac}\right)K.$$
\end{theorem}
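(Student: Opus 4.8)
The plan is to compute $[DEF]$ by subtracting from $[ABC]=K$ the three "corner" triangles $\triangle AFE$, $\triangle BDF$, and $\triangle CED$, since these three triangles together with $\triangle DEF$ tile $\triangle ABC$ (the cevians $AD$, $BP$, $CF$ do not enter this decomposition — what matters is only that $D\in BC$, $E\in CA$, $F\in AB$, so $DEF$ is the inscribed triangle and the three corners are disjoint from it and from each other). Thus the identity to prove reduces to
$$[DEF] = K - [AFE] - [BDF] - [CED].$$

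The main tool is the Common Angle Theorem (the lemma just proved): for the corner at $C$, the triangle $\triangle CED$ shares angle $C$ with $\triangle ABC$, and the sides adjacent to $C$ have lengths $CE=b_1$ and $CD=a_2$, while in $\triangle ABC$ the sides adjacent to $C$ are $CA=b$ and $CB=a$. Hence $[CED]=\dfrac{b_1 a_2}{ab}K=\dfrac{b_1a_2}{ac}\cdot\dfrac{c}{b}K$ — more cleanly, $[CED]=\dfrac{b_1a_2}{ab}K$. Applying the lemma the same way at vertex $A$ (shared angle $A$, adjacent sides $AF=c_1$ and $AE=b_2$ against $AB=c$ and $AC=b$) gives $[AFE]=\dfrac{c_1b_2}{bc}K$, and at vertex $B$ (shared angle $B$, adjacent sides $BD=a_1$ and $BF=c_2$ against $BC=a$ and $BA=c$) gives $[BDF]=\dfrac{a_1c_2}{ab}K$. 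Note these match the three subtracted terms in the claimed formula (with the symmetric labelling $ca$, $ac$, $ac$ in the statement being the same denominators $ab$, up to the cyclic naming of the sides); I will just substitute and collect.

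So the key steps, in order, are: (1) observe the tiling $[ABC]=[DEF]+[AFE]+[BDF]+[CED]$ and justify that the three corner triangles are non-overlapping and exhaust the complement of $\triangle DEF$ — this is immediate from $D,E,F$ lying on the respective sides with $P$ interior, so $D,E,F$ are interior to the sides and $\triangle DEF$ separates the three corners; (2) evaluate each corner area by one application of the Common Angle Theorem; (3) substitute and factor out $K$. The only thing that needs a word of care is step (1): I should remark that because $P$ is interior to $\triangle ABC$, the feet $D$, $E$, $F$ are strictly interior to the sides, so the decomposition into $\triangle DEF$ plus the three corner triangles is genuine (no degeneracies, no overlaps). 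That observation is the one "geometric" point; everything after it is a three-line substitution, so I do not expect any real obstacle.
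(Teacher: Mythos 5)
Your proposal is correct and follows essentially the same route as the paper: write $[DEF]=K-[AFE]-[BDF]-[CED]$ and evaluate each corner triangle by one application of the Common Angle Theorem at the corresponding vertex. One remark on the denominators: your computation correctly gives $[CED]=\frac{b_1a_2}{ab}K$, which in fact exposes a typo in the stated formula (the last denominator should be $ab$, not $ac$); conversely, your $[BDF]=\frac{a_1c_2}{ab}K$ is itself a slip for $\frac{a_1c_2}{ca}K$, as your own identification of the adjacent sides $BC=a$ and $BA=c$ shows.
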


Triangle $DEF$ is known as the \emph{cevian triangle} of point $P$.

\begin{figure}[h!t]
\centering
\includegraphics[width=0.4\linewidth]{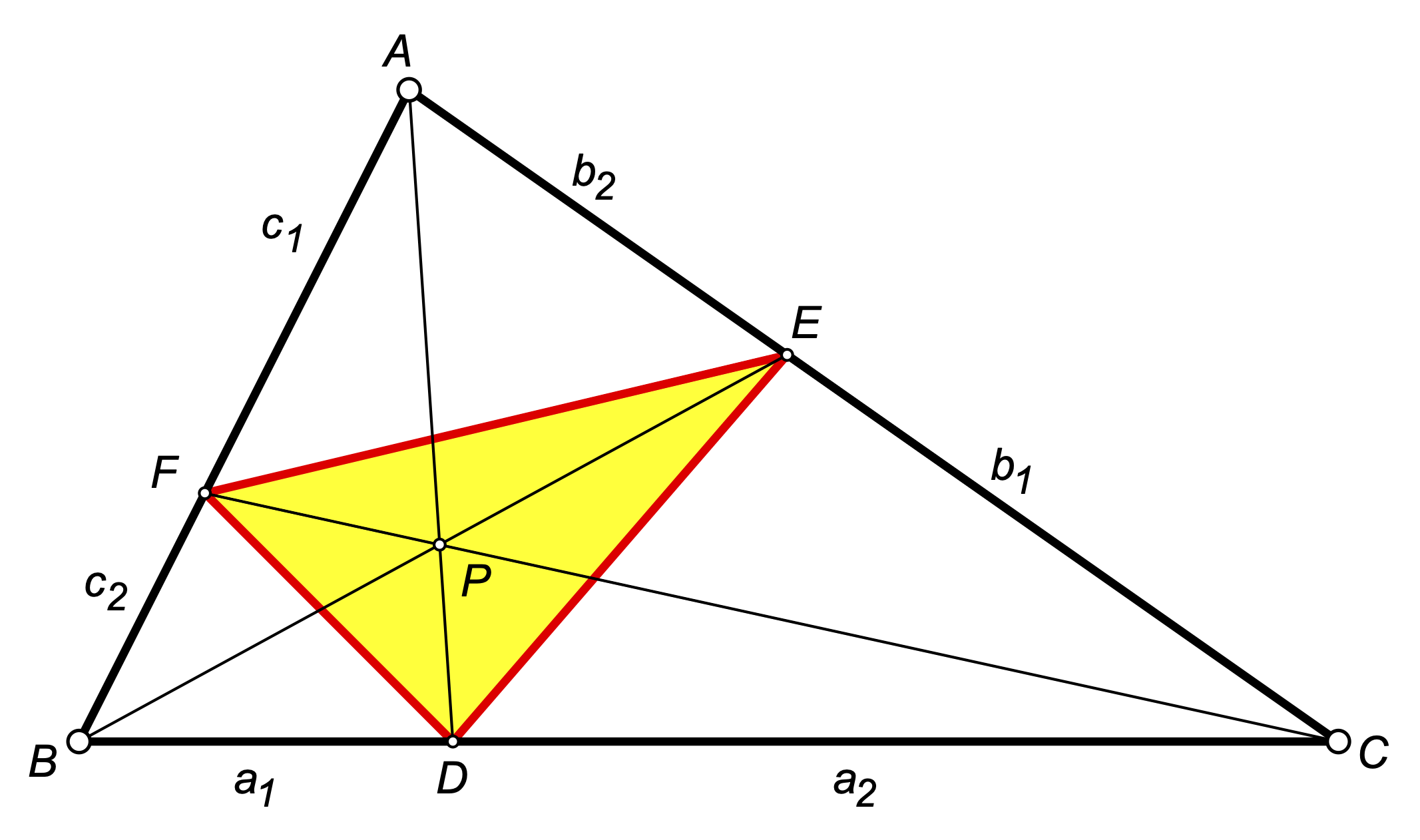}
\caption{}
\label{fig:cevianTriangle}
\end{figure}

\begin{proof}
From the Common Angle Theorem, we have
\begin{align*}
[AFE]&=\frac{c_1b_2}{bc}K,\\
[BDF]&=\frac{a_1c_2}{ca}K,\\
[CED]&=\frac{b_1a_2}{ac}K.
\end{align*}
Since $[DEF]=[ABC]-[AFE]-BDF]-[CED]$, we have
$$[DEF]=\left(1-\frac{c_1b_2}{bc}-\frac{a_1c_2}{ca}-\frac{b_1a_2}{ac}\right)K$$
as required.
\end{proof}

Yff \cite{Yff} gave a formula for the area of the cevian triangle of $Y_1$, but he gave no proof.
We now give a purely geometric proof of his result.


\begin{theorem}[Yff Area Formula]
In $\triangle ABC$, let $AY_1$ meet $BC$ at $D$,
$BY_1$ meet $CA$ at $E$ and $CY_1$ meet $AB$ at $F$ (Figure~\ref{fig:Y1cevianTriangle}).
Then $$[DEF]=\frac{u^3}{2R}.$$
\end{theorem}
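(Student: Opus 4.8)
The plan is to apply the Area of Cevian Triangle theorem (Theorem~\ref{thm:area}) to the point $P=Y_1$ and then simplify using the defining cubic~(\ref{eq:u}) for $u$. First I would read off the six sub-segments from the construction of $Y_1$ in Theorem~\ref{thm:yff}. Since $AD_1$, $BE_1$, $CF_1$ concur at $Y_1$ with $BD_1=CE_1=AF_1=u$, the cevian triangle $DEF$ of $Y_1$ has, in the notation of Theorem~\ref{thm:area}, $a_1=BD=u$, $a_2=DC=a-u$, $b_1=CE=u$, $b_2=EA=b-u$, $c_1=AF=u$, and $c_2=FB=c-u$.

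Substituting these into the formula $[DEF]=\bigl(1-\frac{c_1b_2}{bc}-\frac{a_1c_2}{ca}-\frac{b_1a_2}{ac}\bigr)K$ and clearing the common denominator $abc$ gives
\begin{align*}
[DEF]&=\frac{K}{abc}\Bigl(abc-au(b-u)-bu(c-u)-cu(a-u)\Bigr)\\
     &=\frac{K}{abc}\Bigl(abc-(ab+bc+ca)u+(a+b+c)u^2\Bigr).
\end{align*}

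The crux is to recognize the quantity in parentheses. Expanding $(a-u)(b-u)(c-u)=abc-(ab+bc+ca)u+(a+b+c)u^2-u^3$ and invoking~(\ref{eq:u}), which says $u^3=(a-u)(b-u)(c-u)$, gives $2u^3=abc-(ab+bc+ca)u+(a+b+c)u^2$. Hence $[DEF]=\frac{2u^3K}{abc}$, and substituting the standard identity $K=\frac{abc}{4R}$ yields $[DEF]=\frac{2u^3}{4R}=\frac{u^3}{2R}$, as desired.

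I do not expect any serious obstacle: once Theorem~\ref{thm:area} is available, the argument is a few lines of algebra. The only points requiring care are the bookkeeping of which sub-segment equals $u$ and which equals $a-u$ (etc.), dictated entirely by Figure~\ref{fig:YffPoints} and Theorem~\ref{thm:yff}, and the observation --- which is exactly what makes the formula work --- that the numerator collapses to precisely $2u^3$ because $u$ satisfies its defining equation~(\ref{eq:u}).
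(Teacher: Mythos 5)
Your proposal is correct and follows essentially the same route as the paper: apply Theorem~\ref{thm:area} to the cevian triangle of $Y_1$, observe that the bracketed quantity equals $\frac{u^3+(a-u)(b-u)(c-u)}{abc}=\frac{2u^3}{abc}$ by Equation~(\ref{eq:u}), and finish with $K=\frac{abc}{4R}$. The only cosmetic difference is that you expand the numerator into symmetric functions of $a,b,c$ before recognizing it, whereas the paper identifies it directly as $u^3+(a-u)(b-u)(c-u)$.
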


\begin{figure}[h!t]
\centering
\includegraphics[width=0.4\linewidth]{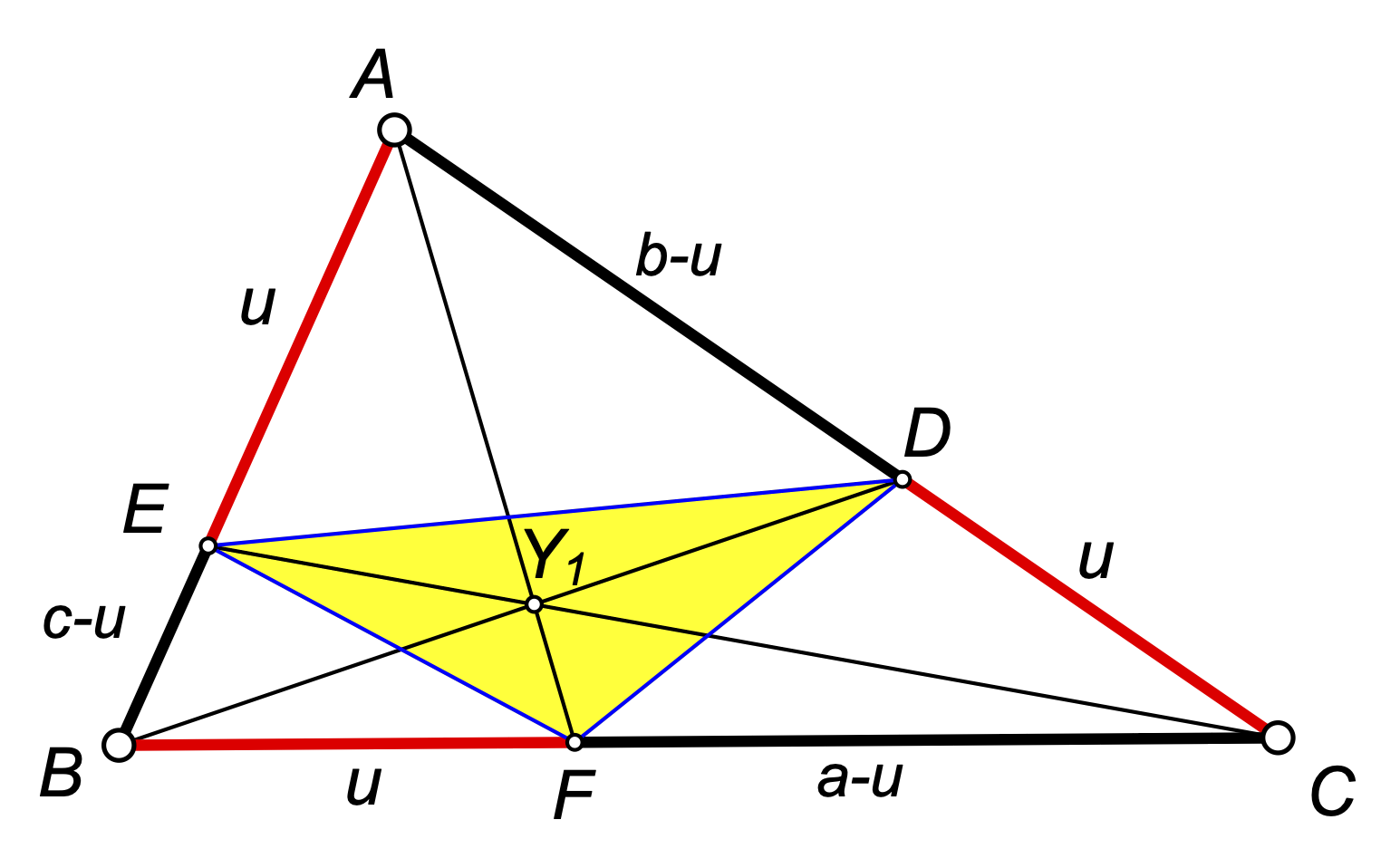}
\caption{}
\label{fig:Y1cevianTriangle}
\end{figure}

\begin{proof}
From the definition of $Y_1$, the segments along the sides of $\triangle ABC$ have the lengths shown in
Figure~\ref{fig:Y1cevianTriangle}.
From Theorem~\ref{thm:area}, we have
\begin{align*}
[DEF]&=\left(1-\frac{u(b-u)}{bc}-\frac{u(c-u)}{ca}-\frac{u(a-u)}{ab}\right)K\\
&=\frac{u^3+(a-u)(b-u)(c-u)}{abc}K\\
&=\frac{u^3+u^3}{abc}K\\
&=2u^3\times \frac{K}{abc}\\
&=\frac{u^3}{2R}
\end{align*}
since $R=abc/(4K)$.
\end{proof}

In a similar manner, we find that the area of the cevian triangle of $Y_2$ is also $u^3/(2R)$.

Weisstein \cite{MathWorld} calls these triangles \emph{Yff Triangles}.

\section{Special Points}

Let $Y_m$ be the midpoint of $Y_1Y_2$.
The point $Y_m$ is a triangle center, but it is not one of the ones cataloged in ETC \cite{ETC}
as of January 1, 2026.
The first normalized trilinear coordinate of $Y_m$ in a 6--9--13 triangle is 0.4500479513210176.

Let $Y_c$ be the center of the unique conic that passes through the points $A$, $B$, $C$, $Y_1$, and $Y_2$.
The point $Y_c$ is not a triangle center listed in ETC \cite{ETC} as of January~1, 2026.
The first normalized trilinear coordinate of $Y_c$ in a 6--9--13 triangle is 0.7440584232553069.


\end{document}